\newtheorem{theorem}{Theorem}[section]
\newtheorem{lemma}[theorem]{Lemma}
\newtheorem{proposition}[theorem]{Proposition}
\newtheorem{corollary}[theorem]{Corollary}
\theoremstyle{definition}
\newtheorem{definition}[theorem]{Definition}
\theoremstyle{remark}
\newtheorem{remark}[theorem]{Remark}
\numberwithin{equation}{section}
\def\p{\partial}
\def\normal{{\hat {\mathbf{n}}}}
\def\domain{{\mathcal D}}
\newcommand{\bq}{\mathbf{q}}
\newcommand{\bu}{\mathbf{u}}
\newcommand{\bff}{\mathbf{f}}
\newcommand{\bs}[1]{{\boldsymbol{#1}}}
\newcommand{\mbf}[1]{{\mathbf{#1}}}
\newcommand{\mc}[1]{{\mathcal{#1}}}
\newcommand{\zh}{{\bs{\widehat z}}}
\newcommand{\dd}{\,\mathrm{d}}
\newcommand{\dx}{\, \mathrm{d} \mathbf{x}}
\newcommand{\dt}{\, \mathrm{d}t}
\newcommand{\ds}{\, \mathrm{d}\sigma}
\newcommand{\OL}[1]{\textcolor{violet}{\bf{#1}}}
\newcommand{\expect}{\mathbb{E}}
\def\domain{\mathcal D}
\def\p{\partial}
\def\normal{\bf n}
\def\bq{\begin{pmatrix}
 b \\
 q
 \end{pmatrix}}
\def\myH{{2}}
\def\myK{\nabla^\perp(\Delta - 1)^{-1}}
\begin{document}

\title[The Stochastic Thermal Quasi-Geostrophic Model]{Theoretical Analysis and Numerical Approximation for the Stochastic thermal quasi-geostrophic model}
\author{D Crisan, DD Holm, O Lang, PR Mensah, W Pan}
\address{DEPARTMENT OF MATHEMATICS, IMPERIAL COLLEGE, LONDON SW7 2AZ, UK.}
\email{d.crisan@imperial.ac.uk, d.holm@imperial.ac.uk, o.lang15@imperial.ac.uk,} \email{p.mensah@imperial.ac.uk, wei.pan@imperial.ac.uk}

\begin{abstract}
%
     This paper investigates the mathematical properties of a stochastic version of the balanced 2D thermal quasigeostrophic (TQG) model of potential vorticity dynamics. This stochastic TQG model is intended as a basis for parametrisation of the dynamical creation of unresolved degrees of freedom in computational simulations of upper ocean dynamics when horizontal buoyancy gradients and bathymetry affect the dynamics, particularly at the submesoscale (250m--10km). Specifically, we have chosen the SALT (Stochastic Advection by Lie Transport) algorithm introduced in \cite{Holm2015} and applied in \cite{cotter2018modelling, cotter2019numerically} as our modelling approach. The SALT approach preserves the Kelvin circulation theorem and an infinite family of integral conservation laws for TQG. 
The goal of the SALT algorithm is to quantify the uncertainty in the process of up-scaling, or coarse-graining of either observed or synthetic data at fine scales, for use in computational simulations at coarser scales. The present work provides a rigorous mathematical analysis of the solution properties of the thermal quasigeostrophic (TQG) equations with stochastic advection by Lie transport (SALT) \cite{holm2019stochastic,HLP2021}.
\end{abstract}

\maketitle

\color{blue}


\color{black}

\section{Introduction}




The deterministic TQG (thermal QG) model augments the standard QG (quasigeostrophic) model for quasigeostrophically balanced planar incompressible fluid flow. Namely, TQG augments the QG balance between pressure gradient and Coriolis force for small Rossby number by also introducing the thermal gradient. See, e.g., \cite{BeronVera2021,BeronVera2021arXiv,HLP2021} for the history and details about how the TQG model can be derived via a balanced asymptotic expansion in small dimension-free ocean dynamics parameters, as well as results of initial computational simulations. 

In TQG, as in other models of planar incompressible fluid flow, the divergence-free vector field $\bu(x,y,t)$ representing horizontal fluid velocity may be defined in terms of its \emph{stream function} $\psi(x,y,t)$ as
\begin{align}
\bu = \nabla^\perp\psi := \bs{\widehat z}\times\nabla\psi \,.
\label{u-psi-def}
\end{align}
The defining expression for the divergence-free TQG velocity vector field $\bu$ is an assumed balance among Coriolis force, hydrostatic pressure force and also the horizontal gradient of buoyancy, which arises at order $O(Ro)$ in the \emph{formal} asymptotic expansion in Rossby number $Ro\ll1$ of the thermal rotating shallow water (TRSW) equations. Namely,
\begin{equation}
\bu := \zh\times\nabla\psi =: \zh\times\nabla\left(\zeta + \frac{1}{2}b\right) 
 + O(Ro) \,.
\label{trsw-tg}
\end{equation}
In the asymptotic expansion approach to deriving the TQG equation, one now discovers a linear relation among the stream function $\psi$, the surface elevation $\zeta$ and the buoyancy $b$ at order $O(Ro)$ as
\begin{align}
\psi = \zeta + b/2 + O(Ro) \,.
\label{u-psi-balance}
\end{align}
Imposing the linear relation \eqref{u-psi-balance} that arises from the assumed balance relation \eqref{trsw-tg} at order $O(Ro)$ in the asymptotic expansion of the TRSW equations now yields the TQG equations for the dynamics of potential vorticity (PV) $q$ in the horizontal $(x,y)$ plane which also involves the dynamics of the buoyancy $b$. The resulting TQG equations are given by  \cite{HLP2021}
\begin{align}
\begin{split}
\partial_t q+ \bm{u}\cdot \nabla q
&
= \bs{\widehat{z}}\cdot \nabla\Big(\zeta - \frac{1}{2}  h_1\Big)\times\nabla b
=: \bs{\widehat{z}}\cdot \nabla\big(\psi -  h_1/2\big)\times\nabla b\,,
\\
\partial_t b + \mbf{u}\cdot \nabla b &= 0
\quad\hbox{with}\quad 
q:= (\Delta - 1)\psi  + f_1
\,,
\end{split}
\label{TQG-eqns-def}
\end{align}
in which $f_1=\mbf{\widehat{z}}\cdot{\rm curl}\mbf{R}(x,y)$ with local rotation velocity $\mbf{R}(x,y)$ is the Coriolis parameter and $h_1(x,y)$ is the bathymetry, both of whose gradients are assumed to be of order $O(Ro)$. Figure \ref{fig:angrydolphin} shows a snapshot of the solution of the TQG equations for potential vorticity $q$ and buoyancy $b$. 

\begin{figure}[!ht]
    \centering
    \includegraphics[width=\textwidth ]{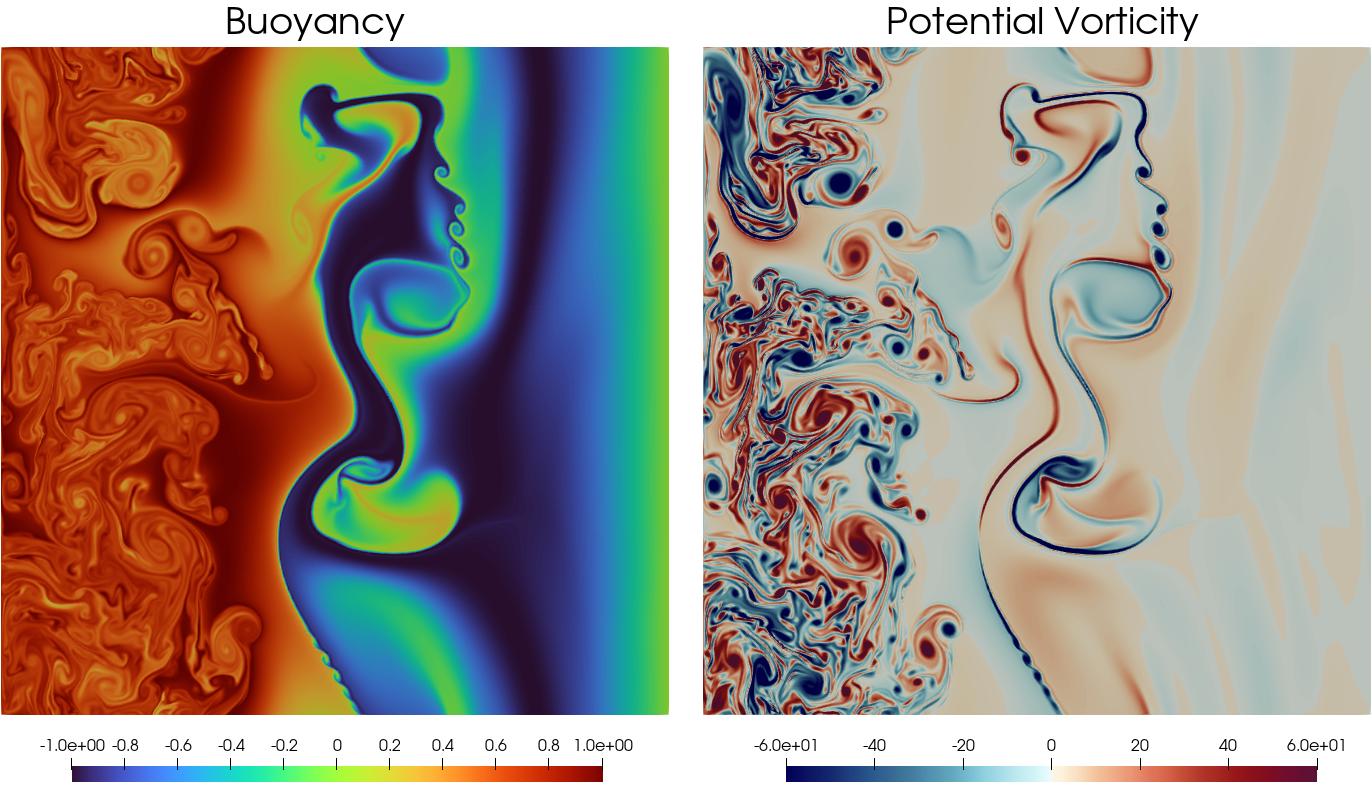}
    \caption{These images are snapshots of buoyancy (left) and potential vorticity (right) from a numerical evolution of the deterministic TQG. The domain is a vertical channel with periodic boundary conditions in $\hat{y}$ and no-slip in $\hat{x}$. It illustrates the result of the creation of a buoyancy front from an initial state and its subsequent emergent instabilities, in which the interaction between buoyancy gradient and bottom topography plays a key role. {Since both buoyancy $(b)$ and PV $(q)$ are advected by the same velocity, eventually their level sets track each other.}}
    \label{fig:angrydolphin}
\end{figure}

\paragraph{\bf The Kelvin circulation theorem for TQG}
To write the Kelvin circulation theorem for TQG, we first introduce the convolution kernel $K$ defined by its action on fluid velocity
\begin{align}
K*\bu = \big(1-\Delta^{-1}\big)\bu
\,.
\label{K-def}    
\end{align}
The utility of the kernel $K$ is that its curl summons the Helmholtz operator; namely,
\begin{align}
{\rm curl}(K*\bu) = \big(\Delta-1\big)\psi
\,.
\label{K-curl}    
\end{align}
The property \eqref{K-curl} under the curl of the kernel $K$ in \eqref{K-def} facilitates writing the Kelvin circulation theorem for TQG as
\begin{align}
\frac{\dd}{\dt}\oint_{c(u)} \big((K*\mbf{u}) + \mbf{R}(x,y)\big)\cdot \dd \mbf{x}
=
\oint_{c(u)} \big(\psi -  h_1/2\big) \nabla b\cdot \dd\mbf{x}
\,.
\label{TQG-KelThm}
\end{align}
Upon applying the Stokes theorem to the Kelvin circulation theorem for TQG in \eqref{TQG-KelThm}, one obtains the rate of change of the flux of potential vorticity $q$ though a planar surface patch whose boundary follows the Kelvin loop, $c(u)$. Namely, with $\bs{\widehat{z}}\cdot{\rm curl}\mbf{R}(x,y)=:f_1(x,y)$, one finds
\begin{align}
\frac{\dd}{\dt}\int_{\partial S=c(u)} \big( (\Delta - 1)\psi  + f_1\big)\dd x\wedge \dd y
=
\frac{\dd}{\dt}\int_{\partial S=c(u)} q\,\dd x\wedge \dd y
=
\int_{\partial S=c(u)} \dd\big(\psi -  h_1/2\big) \wedge \dd b
\,.
\label{TQG-StokesKelThm}
\end{align}

\paragraph{\bf Conservation laws for TQG} 

The deterministic TQG equations in \eqref{TQG-eqns-def} preserve a total energy, given by the sum of kinetic and potential energy as\footnote{An interesting feature is that this kinetic energy penalises a \emph{decrease} in wave number. Therefore, it does not support an inverse energy cascade. This is the source of the high wave-number instability of TQG travelling-wave solution \cite{HLP2021}.}
\begin{align}
E (\bm{u},b)=\int_{\mathcal{D}} \frac{1}{2} \bm{u}\cdot (K*\bm{u}) + \frac{1}{4} (b+h_1)^2 
\,\mathrm{d}x\,\mathrm{d}y\,,
\label{Erg-TQG}
\end{align}
and the kinetic energy involves the $K$ kernel convolution operator $K*$.

Perhaps more surprisingly, the deterministic TQG equations in \eqref{TQG-eqns-def} also preserve an infinity of integral conservation laws, determined by two arbitrary differentiable functions of buoyancy $\Phi(b )$ and $\Psi(b )$ as
\begin{equation}
C_{\Phi,\Psi} = \int_{\mathcal D}\Phi(b ) + q \Psi(b )\,\dd x\dd y\,.
\label{eq:casimirstqg}
\end{equation}
That TQG dynamics preserves the energy in \eqref{Erg-TQG} and the family of integral quantities in \eqref{eq:casimirstqg} can be verified by direct computations. However, the infinity of conservation laws in \eqref{eq:casimirstqg} indicates that the TQG system in \eqref{TQG-eqns-def} may possess a rich mathematical structure which will affect its solution behaviour. We discuss the geometrical aspects of this mathematical structure in Appendix \ref{App-A}.

\subsection{Hamiltonian Stochastic Advection by Lie Transport (SALT) for TQG}
\label{sec:HamiltonSTQG}
Hamiltonian Stochastic Advection by Lie Transport (SALT) can be derived for TQG by augmenting its deterministic Hamiltonian in \eqref{eq:erg-TQG2} to add a stochastic Hamiltonian whose flow under the Lie-Poisson bracket \eqref{TQG-brkt3} induces a Stratonovich stochastic flow along the characteristics of the symplectic vector field generated by the stochastic Hamiltonian process $\langle \varpi \,,\,\varsigma(x,y) \rangle \circ dW_t$. The semimartingale total Hamiltonian is then given by 
\begin{align}
{\rm d}h = {\mc H}_{TQG}(\varpi,b)\dd t + \int_{\mc D} \varpi \,\varsigma(x,y) \dd x \dd y\circ \dd W_t
\,,\label{SALT-HamShift}
\end{align}
whose Lie-algebra valued variations with respect to the \emph{dual} Lie-algebra valued variables $(\varpi ; b)\in (f_1\circledS f_2)^*$ for TQG are expressed in semimartingale form as
\begin{align}
\delta({\rm d}h) = \int \Big(\psi {\rm d}t + \varsigma(x,y) \circ \dd W_t \Big)\delta \varpi
+ \Big(\frac12(b + h_1)\dt - \big(\psi {\rm d}t + \varsigma(x,y) \circ \dd W_t\big)\Big) \delta b
\, \dd x \dd y
\,,\label{SALT-deltaHamShift}
\end{align}
with
\begin{align}
\frac{\delta ({\rm d}h)}{\delta \varpi} = \psi {\rm d}t + \varsigma(x,y) \circ \dd W_t 
\quad\hbox{and}\quad
\frac{\delta ({\rm d}h)}{\delta b} 
= \frac12(b + h_1)\dt - \big(\psi {\rm d}t + \varsigma(x,y) \circ \dd W_t\big)
\,.\label{SALT-deltaHamShift-ex}
\end{align}
SALT TQG dynamics is then expressed geometrically using the coadjoint operator ${\rm ad}^*: \mathfrak{g}\times \mathfrak{g}^*\to \mathfrak{g}^*$  in \eqref{SDP-TQG-LP} in stochastic integral form as
\begin{align}
\big({\rm d} \varpi ; {\rm d}b\big) 
= - \,{\rm ad}^*_{\big({\delta ({\rm d}h)}/{\delta \varpi}\,;\,{\delta ({\rm d}h)}/{\delta b}\big)}
\big(\varpi ; b\big)
\,.
\label{SDP-TQG-LP-SALT}
\end{align}
Thus, SALT TQG solutions evolve by stochastic coadjoint motion under right semidirect-product action of symplectic diffeomorphisms on the dual of its Lie algebra with dual (momentum-map) variables $(\varpi ; b)\in (f_1\circledS f_2)^*$. The symplectic semimartingale vector fields
$({\delta ({\rm d}h)}/{\delta \varpi}\,;\,{\delta ({\rm d}h)}/{\delta b})\in f_1\circledS f_2$ whose characteristic curves generate the stochastic coadjoint solution behaviour are given in equations \eqref{SALT-deltaHamShift-ex}. 

One may write the SALT TQG equations \eqref{SDP-TQG-LP-SALT} in Hamiltonian  matrix operator form as, cf. equation \eqref{TQG-Ham3},
\begin{align}
\begin{split}
 {\rm d} 
\begin{bmatrix}
\varpi \\ b
\end{bmatrix}
&=
\begin{bmatrix}
J(\varpi,\,\cdot\,) & J(b,\,\cdot\,) 
\\ 
J(b,\,\cdot\,)  & 0
\end{bmatrix}
\begin{bmatrix}
{\delta ({\rm d}h) } /{\delta \varpi} = \psi {\rm d}t + \varsigma(x,y) \circ \dd W_t
\\ 
{\delta ({\rm d}h)}/{\delta b} = \frac12(b + h_1)\dt - (\psi {\rm d}t + \varsigma(x,y) \circ \dd W_t)
\end{bmatrix}
\\
 {\rm d} 
\begin{bmatrix}
q \\ b
\end{bmatrix}
&= 
\begin{bmatrix}
J\big(q-b,\,\psi {\rm d}t + \varsigma(x,y) \circ \dd W_t\big) 
+ J\big(b,\,\frac12h_1 \dt \big) 
\\ 
J\big(b,\,\psi {\rm d}t + \varsigma(x,y) \circ \dd W_t\big)
\end{bmatrix}
\\&= -
\begin{bmatrix}
\big(\bm{u}\dt + \bs{\xi}(x,y) \circ \dd W_t\big)\cdot\nabla\big(q-b\big) 
+\bu_h \cdot \nabla b \dt 
\\ 
\big(\bm{u}\dt + \bs{\xi}(x,y) \circ \dd W_t\big)\cdot\nabla b
\end{bmatrix}
\,.
\end{split}
\label{TQG-HamSALT}
\end{align}
In the last step here, we have introduced notation for the fixed symplectic vector field $\bs{\xi}(x,y):=\nabla^\perp\varsigma$ that in principle must be obtained from observed data. Discussions of the methods for acquiring these fixed symplectic vector fields from observed data are beyond the scope of the present paper. However, our intention is to follow the methods of data analysis and data assimilation in  \cite{cotter2018modelling, cotter2019numerically} developed in our previous work for 2D Euler fluid equations and standard QG. Before passing to data analysis, though, one must study the solution properties of the SALT TQG equations. These solution properties of the Hamiltonian stochastic TQG equations in \eqref{TQG-HamSALT} will be investigated in the remainder of this paper. 

The SALT approach via Hamilton's variational principle preserves all of the geometric structure of the underlying deterministic equations \cite{Holm2015}. However, as we have seen for TQG, a derivation of the deterministic equations using Hamilton's variational principle may not always be available, especially when asymptotic expansions are applied to approximate higher level mathematical models. For TQG we have met this situation by using the residual Lie-Poisson Hamiltonian structure possessed by the TQG equations. Before we pass to the analysis of the solution properties of the SALT TQG equations, let us briefly mention an alternative approach to developing stochastic fluid equations in the QG family whose dynamics exactly preserves energy.

{The SALT approach preserves ideal fluid flow properties such as the Kelvin circulation theorem and conservation laws arising from the Lie algebraic structure and Lie-Poisson Hamiltonian properties. These fluid circulation properties and conservation laws are physical principles which benefit both the theoretical analysis and the physicality of computational simulation results. }

\begin{remark}[Energy preservation arising from Stochastic Forcing by Lie Transport (SFLT)]$\,$

Instead of SALT, one could have introduced an energy-preserving form of stochasticity for TQG. The energy-preserving SFLT approach can be written by taking the Lie-Poisson Hamiltonian matrix operator in \eqref{TQG-HamSALT} to be a semimartingale, as follows \cite{HH2021}
\begin{align}
\begin{split}
 {\rm d} 
\begin{bmatrix}
\varpi\\ b
\end{bmatrix}
&=
\begin{bmatrix}
J(\varpi \dt + \xi(x,y) \circ \dd W_t ,\,\cdot\,) & J(b \dt + \xi(x,y) \circ \dd W_t ,\,\cdot\,) 
\\ 
J(b \dt + \xi(x,y) \circ \dd W_t ,\,\cdot\,)  & 0
\end{bmatrix}
\begin{bmatrix}
{\delta {\mc H}_{TQG} } /{\delta \varpi} = \psi 
\\ 
{\delta ({\mc H}_{TQG})}/{\delta b} = \frac12(b + h_1)- \psi
\end{bmatrix}
\\&= - 
\begin{bmatrix}
(\bu\cdot \nabla)\big((\varpi - b) \dt + \xi(x,y) \circ \dd W_t \big) 
- \nabla^\perp\frac12(b + h_1) \cdot \nabla\big(b \dt + \xi(x,y) \circ \dd W_t \big) 
\\ 
(\bu\cdot \nabla) \big(b \dt + \xi(x,y) \circ \dd W_t \big)  
\end{bmatrix}
\,,
\end{split}
\label{TQG-SFLTeqn}
\end{align}
with velocity $\bu =\nabla^\perp \psi$. Because the Hamiltonian matrix operator for the  Lie-Poisson bracket in \eqref{TQG-SFLTeqn} is skew-symmetic under the $L^2$ pairing, one has energy conservation in the form ${\rm d}{\mc H}_{TQG}=0$.
Thus, SFLT preserves the deterministic energy Hamiltonian, ${\mc H}_{TQG}$, although its dynamics is stochastic. Here we do not follow the alternative SFLT approach, though, because it does not preserve the deterministic Casimir conservation laws in \eqref{eq:casimirstqg} that are preserved in the SALT approach by taking the deterministic form of the Lie-Poisson operator while taking the Hamiltonian to be a semimartingale. This approach has been applied to introduce SFLT into the 3D Primitive Equations for ocean circulation dynamics in \cite{HuPatching2022}. 
\end{remark}

\subsection{Content and plan of the paper}
In the sections above, we have explained   the underlying geometric mechanics background for stochastic modelling with SALT in the case of TQG. This is done in parallel with the stochastic modelling with SALT of the classic problem of Euler-Boussinesq convection (EBC), which is considerably simpler than the case of TQG. Geometrically, the dynamics of both TQG and EBC are understood as coadjoint Hamiltonian motion generated by the semidirect-product action of the Lie algebra $(f_1\circledS f_2)$ on function pairs $(f_1;f_2)\in (f_1\circledS f_2)^*$. This is the proper geometric framework for the application of the SALT approach to stochastic modelling in fluid dynamics. 

In Section \ref{sec:MainWellPose}, we begin by making precise the notion of a solution that we wish to construct, after which we state our main analytical results. Here, we are interested in the construction of a unique maximal strong pathwise solution to the SALT TQG equation. This solution is strong in both the stochastic and deterministic sense. 

The proof of the existence of the solution described above will rely on ideas developed in  \cite{breit2018local,breit2018stoch, breit2019stochastic, glatt2009strong, glatt2012local,  mensah2019theses}. In particular, the book \cite{breit2018stoch} on the mathematical analysis of stochastic compressible fluids will serve as our main guide.
In this regard, we begin the proof of our main result in Section \ref{sec:pathSolTrunc} where we construct a strong pathwise solution for an approximation of the SALT TQG whose nonlinearities have been truncated and are subjected to a subclass of initial conditions that have finite moments.
Here, we follow  the classical Yamada--Watanabe-type argument where the pathwise solution is derived from the construction of a stochastically weak solution and the establishment of pathwise uniqueness for the truncated system. The construction of the stochastically weak solution is achieved from a finite-dimensional approximation.

Our next goal will be to show the existence of a strong pathwise solution for the original system by removing the cut-off functions introduced into the system as well as the restriction that the initial condition is of bounded moments. This is carried out in Section \ref{sec:pathSolOrig}. To begin with, we trade an appropriate stopping time for the cut-off function for the nonlinear terms. In so doing, we obtain a local strong pathwise solution for the original system in place of the solution obtained for the truncated system. Having removed the cut-offs, we then proceed to remove the additional boundedness assumption imposed on our initial conditions. This is done by introducing yet another cut-off function, but one which cuts the initial conditions rather than the nonlinear terms. This cut-off allows us to deduce the existence of a local strong pathwise solution for the original system subject to general initial conditions, by piecing together `smaller' local solutions. Next, we deduce maximality of this local solutions after which we show uniqueness of these maximal solutions.
This is a straightforward adaptation of the earlier uniqueness result for the truncated system to subspaces $\Omega_K$ of the sample space $\Omega$. We finally pass to the limit with $K\rightarrow\infty$ to complete the proof of our main result in Theorem \ref{thm:main}.
    
    In Section  \ref{sec:blowup} we give a blow-up criterion for the breakdown of solutions to the SALT TQG. This may be considered as a stochastic analogue of the deterministic result for the Euler equation by Beale, Kato and Majda \cite{beale1984remarks} which has since been extended to the stochastic setting by Crisan, Flandoli and Holm \cite{crisan2019solution}. This result also complements the blowup criterion for the deterministic TQG recently shown in \cite{crisan2022breakdown} on the whole space. \footnote{We are grateful to T. Beale for thoughtful correspondence about the derivation of the celebrated BKM result in the case of periodic boundary conditions.}

    In Section \ref{sec:consitency}, we describe the adaptation of a deterministic explicit third order Runge-Kutta scheme to the SALT TQG system. Using the wellposedness results from earlier sections, we prove the scheme is numerically \emph{consistent} for the truncated SALT TQG system. For completeness, in the Appendix we describe the finite element method we use for the spatial derivatives. However, a full investigation of the numerics is beyond the scope of this paper. We also give the proof of Lemma \ref{rem:xiAppendix} that is used in the construction of the solution in the Appendix.

\section{Construction of a solution}
\label{sec:MainWellPose}


\subsection{Notations}
Our independent variables consists of spatial points $x:=\mathbf{x}=(x, y)\in \mathbb{T}^2$ on the $2$-torus $\mathbb{T}^2$   and a time variable  $t\in [0,T]$, $T>0$. For functions $F$ and $G$, we write $F \lesssim G$  if there exists  a generic constant $c>0$  such that $F \leq c\,G$.
We also write $F \lesssim_p G$ if the  constant  $c(p)>0$ depends on a variable $p$. If $F \lesssim G$ and $G\lesssim F$ both hold (respectively,  $F \lesssim_p G$ and $G\lesssim_p F$), we use the notation $F\sim G$ (respectively, $F\sim_p G$).
The symbol $\vert \cdot \vert$ may be used in four different contexts. For a scalar function $f\in \mathbb{R}$, $\vert f\vert$ denotes the absolute value of $f$. For a vector $\bff\in \mathbb{R}^2$, $\vert \bff \vert$ denotes the Euclidean norm of $\bff$. For a square matrix $\mathbb{F}\in \mathbb{R}^{2\times 2}$, $\vert \mathbb{F} \vert$ shall denote the Frobenius norm $\sqrt{\mathrm{trace}(\mathbb{F}^T\mathbb{F})}$. Finally, if $S\subseteq  \mathbb{R}^2$ is  a (sub)set, then $\vert S \vert$ is the $2$-dimensional Lebesgue measure of $S$.
\\
For $k\in \mathbb{N}\cup\{0\}$ and $p\in [1,\infty]$, we denote by $W^{k,p}(\mathbb{T}^2)$, the Sobolev space of Lebesgue measurable functions whose weak derivatives up to order $k$ belongs to $L^p(\mathbb{T}^2)$. Its associated  norm is
\begin{align}
\Vert v \Vert_{W^{k,p}(\mathbb{T}^2)} =\sum_{\vert \beta\vert\leq k} \Vert \partial^\beta v \Vert_{L^{p}(\mathbb{T}^2)},
\end{align}
where $\beta$ is a $2$-tuple multi-index of nonnegative integers  of length $\vert \beta \vert \leq k$.
The Sobolev space $W^{k,p}(\mathbb{T}^2)$ is a Banach space. Moreover, $W^{k,2}(\mathbb{T}^2)$ is a Hilbert space when endowed with the inner product
\begin{align}
\langle u,v \rangle_{W^{k,2}(\mathbb{T}^2)} =\sum_{\vert \beta\vert\leq k} \langle \partial^\beta u\,,\, \partial^\beta v \rangle,
\end{align}
where $\langle\cdot\,,\,\rangle$ denotes the standard $L^2$-inner product.  
In general, for $s\in\mathbb{R}$, we will define the Sobolev space $H^s(\mathbb{T}^2)$  as consisting of  distributions $v$ defined on $\mathbb{T}^2$ for which the norm
\begin{align}
\label{sobolevNorm}
\Vert  v\Vert_{H^s(\mathbb{T}^2)}
=
 \bigg(\sum_{\xi \in \mathbb{Z}^2} \big(1+\vert \xi\vert^2  \big)^s\vert  \widehat{v}(\xi)\vert^2
  \bigg)^\frac{1}{2}
  \equiv
  \Vert  v\Vert_{W^{s,2}(\mathbb{T}^2)}
\end{align}
defined in frequency space is finite. Here, $\widehat{v}(\xi)$ denotes the Fourier coefficients  of $v$.
To shorten notation, we will write $\Vert  \cdot\Vert_{s,2}$ for $\Vert \cdot\Vert_{W^{s,2}(\mathbb{T}^2)}$
and/or $\Vert \cdot\Vert_{H^s(\mathbb{T}^2)}$.
When $k=s=0$, we get the usual $L^2(\mathbb{T}^2)$ space whose norm we will denote by $\Vert \cdot \Vert_2$ for simplicity.
We will also use a similar convention for norms $\Vert \cdot \Vert_p$ of general $L^p(\mathbb{T}^2)$ spaces for any $p\in [1,\infty]$ as well as for the inner product $\langle\cdot,\cdot \rangle_{k,2}:=\langle\cdot,\cdot \rangle_{W^{k,2}(\mathbb{T}^2)}$ when $k\in \mathbb{N}$. Additionally, we will denote by  $W^{k,p}_{\mathrm{div}}(\mathbb{T}^2)$, the space of divergence-free vector-valued functions in $W^{k,p}(\mathbb{T}^2)$.

\subsection{Preliminary estimates}
In this section, we collect some useful estimates we shall use throughout our analysis.
We begin  with the following result, see \cite{CHLMP2021}, which follows from a direct computation  using the definition \eqref{sobolevNorm} of the Sobolev norms.
\begin{lemma} 
\label{lem:constrt0}
Let $k\in \mathbb{N}\cup\{0\}$ and assume that the triple $(\bu,w)$ satisfies
\begin{align}
\label{constrt0}
\bu =\nabla^\perp \psi, 
\qquad
w=(\Delta -1) \psi.
\end{align}
If $w\in W^{k,2}(\mathbb{T}^2)$, then the following estimate
\begin{align}
\label{lem:MasterEst}
\Vert \bu \Vert_{k+1,2}^2  &\lesssim \Vert w \Vert_{k,2}^2
\end{align}
holds.
\end{lemma}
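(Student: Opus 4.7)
\medskip

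\noindent\textbf{Proof plan.} The plan is to prove the estimate by a direct Fourier-series computation on $\mathbb{T}^2$, exactly as suggested by the definition \eqref{sobolevNorm} of the Sobolev norms. First I would pass the relations \eqref{constrt0} to the Fourier side. Writing $\widehat{\psi}(\xi)$, $\widehat{w}(\xi)$ and $\widehat{\bu}(\xi)$ for the Fourier coefficients indexed by $\xi\in\mathbb{Z}^2$, the identity $w=(\Delta-1)\psi$ becomes
\begin{align*}
\widehat{w}(\xi)=-(1+|\xi|^2)\,\widehat{\psi}(\xi),
\end{align*}
and the identity $\bu=\nabla^\perp\psi$ becomes $\widehat{\bu}(\xi)=i\xi^\perp\widehat{\psi}(\xi)$, where $\xi^\perp=(-\xi_2,\xi_1)$. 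In particular $\widehat{\bu}(0)=0$, so that the zero mode contributes nothing and no issue arises from the non-invertibility of $\Delta$ on constants.

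Next I would combine these two identities to express $\widehat{\bu}$ directly in terms of $\widehat{w}$. For every $\xi\in\mathbb{Z}^2$ one obtains
\begin{align*}
|\widehat{\bu}(\xi)|^2 \;=\; |\xi|^2\,|\widehat{\psi}(\xi)|^2 \;=\; \frac{|\xi|^2}{(1+|\xi|^2)^2}\,|\widehat{w}(\xi)|^2 \;\le\; \frac{1}{1+|\xi|^2}\,|\widehat{w}(\xi)|^2,
\end{align*}
using $|\xi|^2\le 1+|\xi|^2$ in the last step (and interpreting the ratio as $0$ at $\xi=0$).

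Finally I would multiply by the Sobolev weight $(1+|\xi|^2)^{k+1}$ and sum in $\xi$. The weight telescopes against the extra factor $(1+|\xi|^2)^{-1}$, yielding
\begin{align*}
\Vert\bu\Vert_{k+1,2}^2 \;=\; \sum_{\xi\in\mathbb{Z}^2}(1+|\xi|^2)^{k+1}|\widehat{\bu}(\xi)|^2
\;\le\; \sum_{\xi\in\mathbb{Z}^2}(1+|\xi|^2)^{k}|\widehat{w}(\xi)|^2 \;=\; \Vert w\Vert_{k,2}^2,
\end{align*}
which is the claimed inequality (with implicit constant equal to $1$). There is no real obstacle: the elliptic gain of two derivatives in the Helmholtz operator $(\Delta-1)^{-1}$ together with the one-derivative loss in $\nabla^\perp$ gives a net one-derivative gain, and the periodic Fourier framework handles the constant mode cleanly because $\nabla^\perp$ annihilates it.
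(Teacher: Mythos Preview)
Your proof is correct and follows exactly the approach the paper indicates: a direct computation in Fourier coefficients using the definition \eqref{sobolevNorm} of the Sobolev norms (the paper cites \cite{CHLMP2021} and does not spell out the details). Nothing to add.
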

In the following, we let $\mathcal{L}_{\bm{\xi}_i}:=\bm{\xi}_i\cdot\nabla$ be a two-dimensional advection operator with $\mathcal{L}_{\bm{\xi}_i}^2=(\bm{\xi}_i\cdot\nabla)\bm{\xi}_i\cdot\nabla$.
\begin{lemma}
\label{rem:xiAppendix}
Let $\bm{\xi}_i\in W^{k+1,\infty}_{\mathrm{div}}(\mathbb{T}^2)$ be such that $
\sum_{i\in \mathbb{N}} \Vert \bm{\xi}_i \Vert_{k+1,\infty}<\infty$. Then for any $b\in W^{k,2}(\mathbb{T}^2)$ and $q\in W^{k-1,2}(\mathbb{T}^2)$  where $k\in \{0,1,2,3\}$, we have that
\begin{align*}
&\big\langle \mathcal{L}_{\bm{\xi}_i}^2 b\,, \,b \big\rangle_{k,2}  
+
\big\langle \mathcal{L}_{\bm{\xi}_i} b\,, \,\mathcal{L}_{\bm{\xi}_i} b \big\rangle_{k,2} 
\lesssim_k \Vert b \Vert_{k,2}^2,
\\
&\big\langle \mathcal{L}_{\bm{\xi}_i}^2 (q-2b)\,, \,q \big\rangle_{k-1,2}  
+
\big\langle  \mathcal{L}_{\bm{\xi}_i} (q-b)\,, \,\mathcal{L}_{\bm{\xi}_i} (q-b) \big\rangle_{k-1,2} 
\lesssim_{k} \Vert q \Vert_{k-1,2}^2+ \Vert b \Vert_{k,2}^2.
\end{align*}
\end{lemma}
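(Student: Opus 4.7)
The single observation underlying both bounds is that the divergence-free condition $\nabla\cdot\bm{\xi}_i=0$ makes $\mathcal{L}_{\bm{\xi}_i}$ formally skew-adjoint on $L^2(\mathbb{T}^2)$, so that $\langle\mathcal{L}_{\bm{\xi}_i}^2 f,f\rangle_{L^2}+\|\mathcal{L}_{\bm{\xi}_i}f\|_{L^2}^2=0$ for smooth $f$. To lift this $L^2$ identity to $W^{k,2}$, I would expand $\langle\,\cdot\,,\,\cdot\,\rangle_{k,2}=\sum_{|\beta|\le k}\langle\partial^\beta\,\cdot\,,\partial^\beta\,\cdot\,\rangle$ and use the splitting $\partial^\beta\mathcal{L}_{\bm{\xi}_i}=\mathcal{L}_{\bm{\xi}_i}\partial^\beta+[\partial^\beta,\mathcal{L}_{\bm{\xi}_i}]$. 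Applying the $L^2$ identity to $\partial^\beta b$ makes the principal terms $\pm\|\mathcal{L}_{\bm{\xi}_i}\partial^\beta b\|^2$ cancel, leaving a residue of the schematic form
\begin{align*}
\bigl\langle[\mathcal{L}_{\bm{\xi}_i},[\partial^\beta,\mathcal{L}_{\bm{\xi}_i}]]b,\partial^\beta b\bigr\rangle+\bigl\|[\partial^\beta,\mathcal{L}_{\bm{\xi}_i}]b\bigr\|^2.
\end{align*}
Since $[\partial^\beta,\mathcal{L}_{\bm{\xi}_i}]$ is a differential operator of order exactly $|\beta|$ (not $|\beta|+1$) with coefficients built from derivatives of $\bm{\xi}_i$ up to order $|\beta|+1$, and the nested commutator $[\mathcal{L}_{\bm{\xi}_i},[\partial^\beta,\mathcal{L}_{\bm{\xi}_i}]]$ is again of order $|\beta|$ by the commutator-of-operators rule, each residue is bounded by $\|\bm{\xi}_i\|_{k+1,\infty}^2\|b\|_{k,2}^2$. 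Summing over $|\beta|\le k$ yields the first inequality.

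The second estimate is obtained by splitting the left-hand side into a diagonal part and a cross part. The diagonal part $\langle\mathcal{L}_{\bm{\xi}_i}^2 q,q\rangle_{k-1,2}+\|\mathcal{L}_{\bm{\xi}_i}q\|_{k-1,2}^2$ is exactly the first inequality applied with $b\mapsto q$, $k\mapsto k-1$, and contributes $\lesssim\|q\|_{k-1,2}^2$; the remaining term $\|\mathcal{L}_{\bm{\xi}_i}b\|_{k-1,2}^2\lesssim\|b\|_{k,2}^2$ is immediate since $\mathcal{L}_{\bm{\xi}_i}$ consumes only one derivative. The cross part
\begin{align*}
-2\bigl\langle\mathcal{L}_{\bm{\xi}_i}^2 b,q\bigr\rangle_{k-1,2}-2\bigl\langle\mathcal{L}_{\bm{\xi}_i}q,\mathcal{L}_{\bm{\xi}_i}b\bigr\rangle_{k-1,2}
\end{align*}
vanishes at the $L^2$ level because, after invoking skew-adjointness on the first contribution to rewrite it as $+2\langle\mathcal{L}_{\bm{\xi}_i}b,\mathcal{L}_{\bm{\xi}_i}q\rangle$, the two pieces are equal with opposite signs. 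Repeating the commutator expansion of the previous paragraph produces residues of the same schematic form, and Young's inequality splits any surviving mixed factor $\|q\|_{k-1,2}\|b\|_{k,2}$ to close the bound.

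The main obstacle is the a priori loss of derivatives: a crude expansion of $\partial^\beta\mathcal{L}_{\bm{\xi}_i}^2 b$ would require derivatives of $b$ of order $|\beta|+2$, two more than the hypothesis $b\in W^{k,2}$ permits. The proof succeeds only because the leading contribution cancels twice---first through the $L^2$-level skew-adjoint identity, and in the second estimate also through the structural pairing $(q-2b,q)$ versus $(q-b,q-b)$ chosen in the statement---so the effective order of the surviving terms is reduced by two. Keeping careful track of which derivatives fall on $\bm{\xi}_i$ and which on $b$ or $q$, and verifying that every surviving operator is genuinely of order $|\beta|$ with $W^{k+1,\infty}$-coefficients, is the technical heart of the argument and explains precisely why the hypothesis $\bm{\xi}_i\in W^{k+1,\infty}_{\mathrm{div}}$ is imposed rather than merely $W^{k,\infty}_{\mathrm{div}}$.
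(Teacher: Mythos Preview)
Your proposal is correct and follows essentially the same route as the paper: both arguments rest on the skew-adjointness $\mathcal{L}_{\bm{\xi}_i}^*=-\mathcal{L}_{\bm{\xi}_i}$, split the second expression into the diagonal $q$-part (handled by the first inequality at level $k-1$), the $b$-part $\|\mathcal{L}_{\bm{\xi}_i}b\|_{k-1,2}^2$, and the cross term $-2\bigl(\langle\mathcal{L}_{\bm{\xi}_i}^2 b,q\rangle_{k-1,2}+\langle\mathcal{L}_{\bm{\xi}_i}q,\mathcal{L}_{\bm{\xi}_i}b\rangle_{k-1,2}\bigr)$, and observe that the principal contributions of the latter cancel. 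Your commutator bookkeeping (in particular the observation that $[\mathcal{L}_{\bm{\xi}_i},[\partial^\beta,\mathcal{L}_{\bm{\xi}_i}]]$ has order $|\beta|$) makes explicit exactly what the paper leaves implicit in its one-line estimate for $k\ge2$.
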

For the proof of the first inequality, see \cite{crisan2019well}. We provide the proof of the second inequality in the Appendix. 
Finally, let us now recall some Moser-type calculus (commutator estimates) which can be found in  \cite{klainerman1981singular}, for example.
\begin{enumerate}
    \item[I.] Assume that $u,v\in W^{k,2}(\mathbb{T}^2)$. Then for any multi-index $\beta$ with $\vert \beta\vert\leq k$, we have
    \begin{align}
        \Vert \partial^\beta (uv) \Vert_2 \lesssim_k \Vert u \Vert_\infty \Vert \nabla^k v\Vert_2 + \Vert \nabla^k u \Vert_2 \Vert v \Vert_\infty 
    \end{align}
    \item[II.] Assume that $u \in W^{k,2}(\mathbb{T}^2) \cap W^{1,\infty}(\mathbb{T}^2)$ and $v \in W^{k-1,2}(\mathbb{T}^2) \cap L^{\infty} (\mathbb{T}^2)$. Then for any multi-index $\beta$ with $\vert \beta\vert\leq k$, we have
\begin{equation}
\left\| \partial^\beta (uv) - u \partial^\beta v \right\|_{2} \lesssim_k
\left( \| \nabla u \|_{\infty} \|  \nabla^{k-1} v \|_2 + \| \nabla^k u \|_2
\| v \|_\infty  \right).
\end{equation}
\item[III.] Assume that $u \in W^{k,p_3}(\mathbb{T}^2) \cap W^{1,p_1}(\mathbb{T}^2)$ and $v \in W^{k-1,p_2}(\mathbb{T}^2) \cap L^{p_4} (\mathbb{T}^2)$ where   $k\leq 2$ and $p,p_2,p_3\in(1,\infty)$ and $p_1,p_4 \in (1,\infty]$ are such that
\begin{align*}
\frac{1}{p}=\frac{1}{p_1}+\frac{1}{p_2}=\frac{1}{p_3}+\frac{1}{p_4}.
\end{align*}
Then for any multi-index $\beta$ with $\vert \beta\vert\leq k$, we have
\begin{equation}
\left\| \partial^\beta (uv) - u \partial^\beta v \right\|_{p} \lesssim_k
\left( \| \nabla u \|_{p_1} \|  \nabla^{k-1} v \|_{p_2} + \| \nabla^k u \|_{p_3}
\| v \|_{p_4}  \right).
\end{equation}
\end{enumerate}

In the following, we let  $(\bm{\xi}_i)_{i\in \mathbb{N}}$, $\bu_h$ and $f$ be given time-independent functions representing a collection of symplectic vector fields obtained from observed data,  the skew bathymetry gradient, and the Coriolis parameter of a fluid, respectively. 

The equations of Stochastic Advection by Lie Transport for the Thermal Quasi-Geostrophic (SALT TQG), or simply, the Stochastic Thermal Quasi-Geostrophic (STQG) equations which are written in matrix form in \eqref{TQG-HamSALT} may also be expressed equivalently as a pair of coupled equations governing the evolution of the buoyancy $b$ and the potential vorticity $q$ in the following way
\begin{align} 
\dd b + (\bu \cdot \nabla) b \dt +(\bm{\xi}_i\cdot\nabla) b \circ \dd W^i =0,
\label{ce}
\\
\dd q + (\bu\cdot \nabla)( q -b)\dt + (\bm{\xi}_i\cdot\nabla) (q-b) \circ \dd W^i= -(\bu_h \cdot \nabla) b \dt, \label{me}
\\
b(0,\bm{x})=b_0(\bm{x}),\qquad q(0,\bm{x})=q_0(\bm{x}),
\label{initialTQG}
\end{align}
where we have applied the Einstein convention of summing repeated indices over their range and where
\begin{align}
\label{constrt}
\bu =\nabla^\perp \psi, 
\qquad
\bu_h =\frac{1}{2} \nabla^\perp h,
\qquad
q=(\Delta -1) \psi +f.
\end{align}
Here, $\psi$ is the streamfunction and $h$ is the spatial variation around a constant bathymetry profile.\\
Our given set of data is  $((\bm{\xi}_i)_{i\in \mathbb{N}}, \bu_h, f, b_0, q_0)$ with the following properties:
\begin{equation}
\begin{aligned}
\label{dataMain}
\bm{\xi}_i&\in W^{4,\infty}_{\mathrm{div}}(\mathbb{T}^2),
\quad \sum_{i\in \mathbb{N}} \Vert \bm{\xi}_i \Vert_{4,\infty}<\infty,\quad\bu_h \in W^{3,2}_{\mathrm{div}}(\mathbb{T}^2), \quad  f\in W^{2,2}(\mathbb{T}^2)\quad \text{and}
\\
&(b_0, q_0) \in  W^{3,2}(\mathbb{T}^2) \times W^{2,2}(\mathbb{T}^2) \text{ is a pair of } \mathcal{F}_0\text{-measurable random variables.}
\end{aligned}
    \end{equation}
\begin{remark}
The assumption on the time-independent divergence-free
vector fields $\bm{\xi}_i:\mathbb{T}^2 \rightarrow \mathbb{R} ^2 $ are to
ensure that the infinite sum of stochastic integrals in \eqref{ce}--\eqref{me}
are well-defined.
\end{remark}
\begin{remark}
\label{rem:meanzero}
Since we are working on the torus, and the velocity fields are defined by \eqref{constrt}, we have in
particular, $\int_{\mathbb{T}^2}\bu_h \dx=0$ and $\int_{\mathbb{T}^2}\bu \dx=0$. 
Consequently, for simplicity, we will assume that all
functions under consideration have zero averages.
\end{remark}
\begin{remark}
For our theoretical analysis, it is more convenient to express \eqref{ce}--\eqref{me} in It\^o form. To see clearly how Stratonovich to It\^o conversion works for these equations, it is useful to rewrite them in the following compact form
\begin{align}
\label{abstractTQG}
\dd \binom{b}{q} =
\mathcal{A}
\begin{pmatrix}
 b \\
 q
 \end{pmatrix}\dt
 +
\mathcal{G}_i
\begin{pmatrix}
 b \\
 q
 \end{pmatrix}\circ \dd W^i
,
 \qquad
\mathcal{A}
:= -
\begin{bmatrix}
\bu\cdot\nabla & 0    \\[0.3em]
(\bu_h-\bu)\cdot\nabla &\bu\cdot \nabla
\end{bmatrix}
,
\quad 
\mathcal{G}_i
:= -
\begin{bmatrix}
\bm{\xi}_i\cdot\nabla & 0    \\[0.3em]
-\bm{\xi}_i\cdot\nabla &\bm{\xi}_i\cdot \nabla
\end{bmatrix}.
\end{align}
Furthermore, the conversion from the Stratonovich to It\^o integral $\mathcal{G}_i\mathbf{g}\circ \dd W^i \mapsto \frac{1}{2}\mathcal{G}_i^2\mathbf{g}\dt+\mathcal{G}_i\mathbf{g}\dd W^i$, $\mathbf{g}:=(b,q)^T$ yields the following equivalent form for \eqref{ce}--\eqref{me} 
\begin{align}
\dd b + (\bu \cdot \nabla) b \dt
-
\frac{1}{2}
(\bm{\xi}_i\cdot\nabla)(\bm{\xi}_i\cdot\nabla)b 
\dt
+(\bm{\xi}_i\cdot\nabla) b \, \dd W^i =0,
\\
\dd q + (\bu\cdot \nabla)( q -b)\dt -
\frac{1}{2}(\bm{\xi}_i\cdot\nabla)(\bm{\xi}_i\cdot\nabla) (q-2b)\dt 
 + (\bm{\xi}_i\cdot\nabla) (q-b)  \dd W^i= -(\bu_h \cdot \nabla) b \dt.
\end{align}
\end{remark}
Our main goal is to construct a  solution of \eqref{ce}--\eqref{constrt} that lives on a maximal time interval and  that is strong in both PDE and probabilistic sense. To make this clearer, let us first make the following definitions.
\begin{definition}[Local strong pathwise solution]
\label{def:locStrongPathSol}
Let $(\Omega, \mathcal{F}, (\mathcal{F}_t)_{t\geq 0}, \mathbb{P})$ be a stochastic basis and $(W^i)_{i \in \mathbb{N}}$ a sequence of independent one-dimensional Brownian motions adapted to the complete and right-continuous filtration $(\mathcal{F}_t)_{t\geq 0}$. Let $((\bm{\xi}_i)_{i\in \mathbb{N}}, \bu_h, f, b_0, q_0)$ be a dataset satisfying \eqref{dataMain}. A triplet $(b,q,\tau)$ is called a \textit{local strong pathwise solution} of \eqref{ce}--\eqref{constrt} if:
\begin{itemize}
    \item $\tau$ is a $\mathbb{P}$-a.s. strictly positive $(\mathcal{F}_t)$-stopping time;
    \item $(b,q)$ is a pair of $(\mathcal{F}_t)$-progressively measurable stochastic processes such that
    \begin{align*}
        b(\cdot \wedge \tau) \in C([0,T];W^{3,2}(\mathbb{T}^2)), \qquad
        q(\cdot \wedge \tau) \in C([0,T];W^{2,2}(\mathbb{T}^2)) \quad \mathbb{P}\text{-a.s.;}
    \end{align*}
    \item the equations
    \begin{align*}
b(t\wedge \tau) =& b_0 - \int_0^{t\wedge \tau} (\bu \cdot \nabla) b \ds +\frac{1}{2} \int_0^{t\wedge \tau} (\bm{\xi}_i\cdot\nabla) (\bm{\xi}_i\cdot\nabla) b  \ds
- \int_0^{t\wedge \tau} (\bm{\xi}_i\cdot\nabla) b  \dd W^i_\sigma,
\\
q(t\wedge \tau) =& q_0 - \int_0^{t\wedge \tau}(\bu\cdot \nabla)( q -b)\ds -\int_0^{t\wedge \tau}(\bu_h \cdot \nabla) b \ds 
+
\frac{1}{2}\int_0^{t\wedge \tau} (\bm{\xi}_i\cdot\nabla)(\bm{\xi}_i\cdot\nabla) (q-\OL{2}b)  \ds
\\
&- \int_0^{t\wedge \tau} (\bm{\xi}_i\cdot\nabla) (q-b) \dd W^i_\sigma,
\end{align*}
    hold $\mathbb{P}$-a.s. for all $t\in[0,T]$.
\end{itemize}
\end{definition}

\begin{definition}[Maximal strong pathwise solution]
\label{def:MaxStrongPathSol}
Fix the preamble as contained in Definition \ref{def:locStrongPathSol}. A set $(b,q,(\tau_R)_{R\in \mathbb{N}}, \tau)$ is called a \textit{maximal strong pathwise solution} of  \eqref{ce}--\eqref{constrt} if:
\begin{itemize}
    \item $\tau$ is a $\mathbb{P}$-a.s. strictly positive $(\mathcal{F}_t)$-stopping time;
    \item $(\tau_R)_{R\in \mathbb{N}}$ is an increasing sequence of $(\mathcal{F}_t)$-stopping times such that $\tau_R <\tau$ on the set $[\tau<T]$, $\lim_{R\rightarrow \infty} \tau_R = \tau$ a.s. and
    \begin{align*}
        \sup_{t\in [0,\tau_R]}\big( \Vert \nabla b(t) \Vert_\infty + \Vert \nabla \bu(t) \Vert_\infty+ \Vert q(t) \Vert_\infty \big)\geq R \quad \text{on} \quad [\tau<T];
    \end{align*}
    \item each $(b,q,\tau_R), R\in \mathbb{N}$ is a local strong pathwise solution.
\end{itemize}
\end{definition}
\begin{remark}
\label{rem:weakBlowup}
Here, $\tau$  marks the maximal lifespan of the solution, which is determined by the time of explosion of the $L^\infty(\mathbb{T}^2)$-norm of either the buoyancy gradient, the velocity gradient, or the potential vorticity. Indeed, by constructing a log-Sobolov estimate in the spirit of Beale--Kato--Madja \cite[Eq. (15)]{beale1984remarks} for the Euler equation, one may control the lifespan of the STQG by just the buoyancy gradient or the potential vorticity.
\end{remark}
\begin{remark}
Our maximal solution is defined within the fixed, but arbitrary, interval $[0,T]$. By piecing together these finite interval maximal solutions, we can obtain a maximal solution without a constraint on a final time $T$.
\end{remark}
With these definitions in hand, we are now in the position to state our main result.
\begin{theorem}[Existence of a unique maximal solution]
\label{thm:main}
For $((\bm{\xi}_i)_{i\in \mathbb{N}}, \bu_h, f, b_0, q_0)$ satisfying \eqref{dataMain}, there exists a unique maximal strong pathwise
solution $(b,q, (\tau_R)_{R\in \mathbb{N}},\tau)$ of \eqref{ce}--\eqref{constrt}.
\end{theorem}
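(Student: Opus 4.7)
The plan is to follow the standard Yamada--Watanabe / Gy\"ongy--Krylov pipeline adapted to the SALT TQG coupled system: first construct a solution to a truncated version on a short time interval, upgrade it to a pathwise strong solution, then restore the original nonlinearities by trading the truncation for a stopping time, and finally glue together solutions issuing from arbitrarily large initial data to obtain the maximal object. Concretely, introduce a smooth cut-off $\theta_R : \mathbb{R}_+ \to [0,1]$ and replace the drifts $(\bu\cdot\nabla)b$ and $(\bu\cdot\nabla)(q-b) + (\bu_h\cdot\nabla)b$ by their truncated versions obtained by multiplying with $\theta_R\big(\Vert \nabla b\Vert_\infty + \Vert\nabla\bu\Vert_\infty + \Vert q\Vert_\infty\big)$, so that the nonlinearities become globally Lipschitz on bounded sets in $H^{3}\times H^{2}$.

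For the truncated system with initial data of finite moments, I would first construct stochastically weak solutions via a Galerkin scheme on finite-dimensional subspaces spanned by eigenfunctions of the Helmholtz operator $\Delta-1$. The crucial a priori bound is an energy estimate at the natural regularity level: apply It\^o's formula to $\Vert b\Vert_{3,2}^{2} + \Vert q\Vert_{2,2}^{2}$ for the It\^o form of \eqref{abstractTQG}. The Stratonovich-to-It\^o correction combines with the quadratic variation of the transport noise, and is controlled using Lemma \ref{rem:xiAppendix} (applied at $k=3$ for the $b$-equation and $k=2$ for the $q$-equation). The deterministic drift is estimated using the Moser commutator inequalities (I)--(III), combined with Lemma \ref{lem:constrt0} which gives $\Vert\bu\Vert_{3,2}\lesssim\Vert q\Vert_{2,2}$ and therefore keeps the non-local reconstruction of velocity compatible with the estimate. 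After taking expectation of the supremum via the Burkholder--Davis--Gundy inequality, I would absorb the martingale term and close the estimate through Gronwall. Tightness of the Galerkin laws in $C([0,T];H^{3-\epsilon})\times C([0,T];H^{2-\epsilon})$ combined with the Jakubowski--Skorokhod representation and the uniform bounds delivers a martingale solution in the required class. Pathwise uniqueness for the truncated system is established by subtracting two solutions, testing the difference in $H^{2}\times H^{1}$ (one regularity level below, to avoid needing commutator estimates on the already top-order terms) and using the Lipschitz property enforced by $\theta_R$; Gy\"ongy--Krylov then upgrades the martingale solution to a strong pathwise one.

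To remove the cut-off I would define $\tau_R := \inf\{ t\ge 0 : \Vert\nabla b(t)\Vert_\infty + \Vert\nabla\bu(t)\Vert_\infty + \Vert q(t)\Vert_\infty \ge R\}\wedge T$. On $[0,\tau_R]$ the truncated and original drifts agree, so the truncated solution is a local strong pathwise solution of the original system in the sense of Definition \ref{def:locStrongPathSol}. For the additional restriction of finite moments on the initial data, I would decompose the sample space into $\Omega_K := \{\Vert b_0\Vert_{3,2} + \Vert q_0\Vert_{2,2}\le K\}$, which are $\mathcal{F}_0$-measurable and cover $\Omega$ up to a null set as $K\to\infty$. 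On each $\Omega_K$ the previous construction yields a local strong pathwise solution $(b^K,q^K,\tau_R^K)$; by pathwise uniqueness on intersections $\Omega_K\cap\Omega_{K'}$ these solutions are consistent, so they can be concatenated into a single $(b,q,\tau_R)$ defined on all of $\Omega$.

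Finally, the sequence $(\tau_R)_{R\in\mathbb{N}}$ is non-decreasing by definition, and its limit $\tau := \lim_{R\to\infty}\tau_R$ is the maximal existence time: on $[\tau<T]$, at least one of $\Vert\nabla b\Vert_\infty, \Vert\nabla\bu\Vert_\infty, \Vert q\Vert_\infty$ must blow up at $\tau$, exactly as in Definition \ref{def:MaxStrongPathSol}. Uniqueness of the maximal solution is inherited from the truncated uniqueness result: given two maximal solutions, localise by the minimum of the respective blow-up stopping times together with $\Omega_K$, apply the $H^{2}\times H^{1}$ difference estimate to conclude they coincide on $[0,\tau_R\wedge\tilde\tau_R]\times\Omega_K$, then send $R,K\to\infty$. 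The main technical obstacle throughout is the asymmetric regularity framework $H^{3}\times H^{2}$ forced by the source term $(\bu_h-\bu)\cdot\nabla b$ in the $q$-equation together with the non-locality of $\bu = \nabla^\perp(\Delta-1)^{-1}(q-f)$: the commutator estimates and the noise bound in Lemma \ref{rem:xiAppendix} have to be applied at precisely the right order, and in particular the mixed term $\bigl\langle \mathcal{L}_{\bm{\xi}_i}^{2}(q-2b),q\bigr\rangle_{1,2}$ must be absorbed into $\Vert q\Vert_{1,2}^{2}+\Vert b\Vert_{2,2}^{2}$ without loss of regularity on either variable, which is exactly what the second inequality of Lemma \ref{rem:xiAppendix} provides.
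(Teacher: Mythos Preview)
Your proposal is correct and follows essentially the same route as the paper: truncate the nonlinear drifts by $\theta_R(\Vert\nabla b\Vert_\infty+\Vert\nabla\bu\Vert_\infty+\Vert q\Vert_\infty)$, run a Galerkin approximation with uniform $W^{3,2}\times W^{2,2}$ bounds via the commutator estimates and Lemma~\ref{rem:xiAppendix}, obtain tightness in $C([0,T];W^{s,2})\times C([0,T];W^{s-1,2})$ for $s\in(2,3)$, apply Jakubowski--Skorokhod and Gy\"ongy--Krylov, prove pathwise uniqueness at the $W^{2,2}\times W^{1,2}$ level, then trade the cut-off for the stopping time $\tau_R$ and localise in $\omega$ to handle general initial data. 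Two minor deviations from the paper worth noting: the paper does \emph{not} truncate the linear term $(\bu_h\cdot\nabla)b$ (though doing so is harmless), and for the extension to general initial data the paper uses a partition into annuli $U_{K(M)}\setminus\cup_{J<M}U_{K(J)}$ with $K(M)$ calibrated against $M$ so that the cut-off is inactive, rather than simply restricting to $\Omega_K$; your version works just as well by pathwise uniqueness on overlaps.
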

\begin{remark}
Uniqueness of a maximal strong pathwise solution as stated in our main result is to be understood in the sense that only the triplet $(b,q,\tau)$ is unique.
\end{remark}
In the following, we state a weak stability result for a superset of the maximal strong pathwise solutions to be constructed in Theorem \ref{thm:main}. This will consist of solutions that emanate from a subset of the dataset \eqref{dataMain} with bounded-in-$\omega$ initial conditions $(b_0,q_0)$.
\begin{theorem}[Weak continuity with respect to bounded initial data]
\label{thm:stability}
Define $G(t)$ as follow
\begin{align}
G(t):= c\,\bigg( 1+ \Vert \bu_h \Vert_{3,2}+\Vert f \Vert_{2,2}
+\sum_{i=1}^2 \Vert b_i(t) \Vert_{3,2}^3 +\sum_{i=1}^2\Vert q_i(t) \Vert_{2,2}^3 \bigg).
\end{align}
Any pair of maximal strong pathwise solutions $(b^j,q^j, (\tau_R^j)_{R\in \mathbb{N}},\tau^j)$, $j=1,2$ with respective dataset  $((\bm{\xi}_i)_{i\in \mathbb{N}}, \bu_h, f, b_0^j, q_0^j)$, $j=1,2$ satisfying \eqref{dataMain} and $(b_0^j, q_0^j)\in L^\infty(\Omega; W^{3,2}(\mathbb{T}^2)\times W^{2,2}(\mathbb{T}^2))$, $i=1,2$ will satisfy the bound
\begin{align*}
\mathbb{E}\Big[e^{-\int_0^t G(\sigma)\dd \sigma}\big(
    \Vert(b^1-b^2)(t) \Vert_{2,2}^2 +  \Vert(q^1-q^2)(t) \Vert_{1,2}^2
    \big) \Big]
    \leq 
     \mathbb{E}
 \big(
    \Vert b^1_0-b^2_0 \Vert_{2,2}^2 +  \Vert q^1_0-q^2_0 \Vert_{1,2}^2
    \big)
\end{align*}
$\mathbb{P}$-a.s. for all $t\in[0,\tau^1 \wedge \tau^2)$.
\end{theorem}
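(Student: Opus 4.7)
Set $B := b^1-b^2$, $Q := q^1-q^2$, and $\mathbf{U} := \bu^1-\bu^2$; the Coriolis parameter $f$ cancels in the difference of the constraints \eqref{constrt}, so $\mathbf{U} = \nabla^\perp(\Delta-1)^{-1}Q$ and Lemma \ref{lem:constrt0} gives in particular $\|\mathbf{U}\|_{2,2}\lesssim \|Q\|_{1,2}$. Subtracting the It\^o form of \eqref{ce}--\eqref{me} for the two solutions yields a linear system for $(B,Q)$ whose drift consists of the common transports $(\bu^1\cdot\nabla)B$ and $(\bu^1\cdot\nabla)(Q-B)$, the mixed transports $(\mathbf{U}\cdot\nabla)b^2$ and $(\mathbf{U}\cdot\nabla)(q^2-b^2)$, the topographic forcing $(\bu_h\cdot\nabla)B$, and the It\^o--Stratonovich corrections $\tfrac{1}{2}(\bm{\xi}_i\cdot\nabla)^2 B$, $\tfrac{1}{2}(\bm{\xi}_i\cdot\nabla)^2(Q-2B)$, plus the noise $(\bm{\xi}_i\cdot\nabla)B\,\dd W^i$ and $(\bm{\xi}_i\cdot\nabla)(Q-B)\,\dd W^i$. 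The plan is to apply It\^o's formula at one Sobolev level below the top regularity, namely to $\|B\|_{2,2}^2+\|Q\|_{1,2}^2$, and then to close by a Gr\"onwall argument in exponential form with integrating factor $e^{-\int_0^t G(\sigma)\,\dd\sigma}$.

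\textbf{Drift estimates.} For the common transports I write $\partial^\beta(\bu^1\cdot\nabla v)=(\bu^1\cdot\nabla)\partial^\beta v + [\partial^\beta,\bu^1\cdot\nabla]v$; the first piece vanishes under the $L^2$ pairing by integration by parts since $\bu^1$ is divergence-free, while the commutator is controlled by the Moser estimate (II) in terms of $\|\nabla\bu^1\|_\infty\lesssim \|q^1\|_{2,2}$, using Lemma \ref{lem:constrt0} and the 2D embedding $W^{3,2}\hookrightarrow W^{2,\infty}$. The mixed transports are estimated via the Moser product estimates (I)/(III) together with $\|\mathbf{U}\|_{2,2}\lesssim\|Q\|_{1,2}$, and after Cauchy--Schwarz and Young's inequality yield a constant times $(\|b^2\|_{3,2}+\|q^2\|_{2,2})(\|B\|_{2,2}^2+\|Q\|_{1,2}^2)$. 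The topography term is bounded directly by $\|\bu_h\|_{3,2}(\|B\|_{2,2}^2+\|Q\|_{1,2}^2)$. Finally, Lemma \ref{rem:xiAppendix} applied with $k=2$ to $B$ and with $k=1$ to $Q$ is tailor-made to control the sum of the It\^o--Stratonovich correction and the quadratic-variation contribution by a constant multiple of $\|B\|_{2,2}^2+\|Q\|_{1,2}^2$; the shift $q\mapsto q-2b$ in the $Q$-equation is precisely the combination the second estimate of that lemma is built for. Collecting everything,
\[
\dd\bigl(\|B\|_{2,2}^2+\|Q\|_{1,2}^2\bigr) \leq G(t)\bigl(\|B\|_{2,2}^2+\|Q\|_{1,2}^2\bigr)\,\dd t + \dd M_t,
\]
where $M_t$ is a local $(\mathcal{F}_t)$-martingale; the cubic exponents in $G(t)$ leave ample slack for the Young's inequality splits.

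\textbf{Closure.} It\^o's product rule applied to $e^{-\int_0^t G\,\dd\sigma}\bigl(\|B\|_{2,2}^2+\|Q\|_{1,2}^2\bigr)$ cancels the Gr\"onwall drift and leaves only $e^{-\int_0^t G\,\dd\sigma}\dd M_t$. Localising by $\tau_R^1\wedge\tau_R^2$ promotes this to a genuine martingale, because on $[0,\tau_R^j]$ the paths of $b^j$ and $q^j$ are continuous into $W^{3,2}$ and $W^{2,2}$, so the relevant stochastic integrals are pathwise square integrable. Taking expectation kills the martingale, yielding the stated bound with $t$ replaced by $t\wedge\tau_R^1\wedge\tau_R^2$. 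The $L^\infty(\Omega)$ bound on the initial data together with the $\mathbb{P}$-a.s.\ continuity of $(b^j,q^j)$ into $W^{3,2}\times W^{2,2}$ supply the integrability needed to pass to the limit $R\to\infty$ by dominated convergence, recovering the claim on $[0,\tau^1\wedge\tau^2)$.

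\textbf{Main obstacle.} The delicate point is arranging each drift estimate so that the top-level norms $\|b^2\|_{3,2}$ and $\|q^2\|_{2,2}$ appear only as multiplicative coefficients collected inside $G(t)$, not as factors of $\|B\|_{2,2}$ or $\|Q\|_{1,2}$. This demands systematic use of integration by parts to shift derivatives onto the ``larger'' of the two solutions, combined with the 2D embeddings $W^{2,2}\hookrightarrow L^\infty$ and $W^{3,2}\hookrightarrow W^{2,\infty}$. The tightest balance is the mixed term $(\mathbf{U}\cdot\nabla)(q^2-b^2)$ in the $Q$-equation, since Lemma \ref{lem:constrt0} buys only one extra derivative of regularity for $\mathbf{U}$ over $Q$, leaving essentially no margin for the Moser product estimate to fail.
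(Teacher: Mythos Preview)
Your proposal is correct and follows essentially the same route as the paper. The paper's proof simply points back to the pathwise uniqueness computation for the cut-off system (culminating in \eqref{bandq32estUniqSummProdxY}), noting that the stopping times $\tau_R^1\wedge\tau_R^2$ now play the role previously played by the cut-offs $\theta_R$; your plan---difference equations, It\^o's formula for $\|B\|_{2,2}^2+\|Q\|_{1,2}^2$, Moser commutator estimates for the drift, Lemma~\ref{rem:xiAppendix} for the It\^o--Stratonovich plus quadratic-variation cancellation, It\^o's product rule with the integrating factor $e^{-\int_0^t G}$, localisation and expectation---is exactly that computation. The only cosmetic differences are your choice to transport by $\bu^1$ rather than $\bu^2$ in the splitting, and that your invocation of Lemma~\ref{rem:xiAppendix} ``with $k=1$ to $Q$'' should read $k=2$ (so that the second inequality gives the $(k-1)=1$ level you need).
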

We now devote the following subsections to the proof of Theorem \ref{thm:main}.

\subsection{Strong pathwise solutions of the truncated system}
\label{sec:pathSolTrunc}
In the following, for all $n\in \mathbb{N}$, we let $P_n$ be the orthogonal projection operator  mapping $L^2(\mathbb{T}^2)$ onto $X_n:=\mathrm{span}\{\phi_1, \ldots, \phi_n\}$ where $\{ \phi_k\}_{k\in\mathbb{N}}$ is a complete
orthonormal system, see \cite[Page 104]{breit2018stoch}, \cite[Chapter 3]{grafakos2008classical} for further details. In particular, we recall that the $P_n$'s are continuous operators on all Hilbert spaces under consideration.
Finally, for a fixed $R>0$, we let  $\theta_R:[0,\infty) \rightarrow [0,1]$ be a smooth cut-off function satisfying
\begin{align}
\label{cutoff}
\theta_R(z)
= \left\{
  \begin{array}{lr}
    1 & : 0\leq z\leq R,\\
    0 & : z \geq R+1.
  \end{array}
\right.
\end{align}
Now we consider the dataset $((\bm{\xi}_i)_{i\in \mathbb{N}}, \bu_h, f, b_0, q_0)$ satisfying 
\begin{equation}
\begin{aligned}
\label{dataGarlerkin}
&\bm{\xi}_i\in W^{4,\infty}_{\mathrm{div}}(\mathbb{T}^2),
\qquad \sum_{i\in \mathbb{N}} \Vert \bm{\xi}_i \Vert_{4,\infty}<\infty,\qquad\bu_h \in W^{3,2}_{\mathrm{div}}(\mathbb{T}^2), \qquad  f\in W^{2,2}(\mathbb{T}^2),
\\
(b_0, &q_0) \in L^\infty\big(\Omega; W^{3,2}(\mathbb{T}^2) \times W^{2,2}(\mathbb{T}^2)\big) \text{ is a pair of } \mathcal{F}_0\text{-measurable random variables.}
\end{aligned}
\end{equation}
Our goal now is to construct a solution $(b,q)$ of
\begin{align}
\dd b + \theta_R[(\bu \cdot \nabla) b ]\dt + (\bm{\xi}_i\cdot\nabla) b \circ \dd W^i =0,
\label{ceCut}
\\
\dd q + \theta_R[(\bu\cdot \nabla)( q -b)]\dt + (\bu_h \cdot \nabla) b \dt  + (\bm{\xi}_i\cdot\nabla) (q- b) \circ \dd W^i=0, \label{meCut}
\\
b(0,x)=b_0(x),\qquad q(0,x)=q_0(x)
\label{initialTQGCut}
\end{align}
in  $W^{3,2}(\mathbb{T}^2) \times W^{2,2}(\mathbb{T}^2)$ for the given dataset \eqref{dataGarlerkin} and where
\begin{align}\label{eq: theta_r}
\theta_R:=
\theta_R(\Vert \nabla b \Vert_{\infty}+\Vert \nabla \bu \Vert_{\infty}+ \Vert q \Vert_{\infty})
\end{align}
is the cut-off function \eqref{cutoff}.
As before,
\begin{align}
\label{constrtCut}
\bu =\nabla^\perp \psi, 
\qquad
\bu_h =\frac{1}{2} \nabla^\perp h,
\qquad
q=(\Delta -1) \psi +f.
\end{align}
We will achieve this goal by using a Galerkin approximation. In this regard, for the given data \eqref{dataGarlerkin}, we can first construct a finite-dimensional solution $ (b_n,q_n)\in L^2(\Omega; C([0,T]; X_n\times X_n)$ of
\begin{align}
\dd b_n + \theta_R^nP_n [(\bu_n \cdot \nabla) b_n ]\dt
+\frac{1}{2}P_n \big[(\bm{\xi}_i\cdot\nabla)P_n[ (\bm{\xi}_i\cdot\nabla) b_n]\big]  \dt + P_n[(\bm{\xi}_i\cdot\nabla) b_n ]\dd W^i =0,
\label{ceCutGar}
\\
\dd q_n +  \theta_R^n P_n [(\bu_n \cdot \nabla)( q_n -b_n) ]\dt 
+\frac{1}{2}P_n \big[(\bm{\xi}_i\cdot\nabla)P_n[ (\bm{\xi}_i\cdot\nabla)(q_n - 2b_n)]\big]\dt + P_n[(\bu_h \cdot \nabla) b_n] \dt 
\nonumber
\\+ 
P_n[(\bm{\xi}_i\cdot\nabla) (q_n - b_n)]  \dd W^i_t= 0, \label{meCutGar}
\\
b_{0,n}:=b_n(0,x)=P_n b_0(x),\qquad q_{0,n}:=q_n(0,x)= P_n q_0(x),
\label{initialTQGCutGar}
\end{align}
where
\begin{align}
\theta_R^n:=
\theta_R(\Vert \nabla b_n \Vert_{\infty}+\Vert \nabla \bu_n \Vert_{\infty}+ \Vert q_n \Vert_{\infty})
\end{align}
and where
\begin{align}
\label{constrtGar}
\bu_n =\nabla^\perp \psi_n, 
\qquad
\bu_h =\frac{1}{2} \nabla^\perp h,
\qquad
q_n=(\Delta -1) \psi_n +f.
\end{align}
Indeed, since this is a finite  dimensional system of SDEs with locally Lipschitz  drift coefficients and globally Lipschitz diffusion coefficients, by a standard theorem, see for instant \cite{karatzas1988brownian}, we can infer the existence  of a unique local solution $(b_n,q_n)$.
The fact that the solutions are global will follow from the following a priori estimates.
%

\subsubsection{Uniform estimates}
\label{subsub:uniEst}
We now aim to show uniform bounds for $(b_n, q_n)$ in $W^{3,2}(\mathbb{T}^2) \times W^{2,2}(\mathbb{T}^2)$.
To do this, we apply $\partial^\beta $ to \eqref{ceCutGar} with $\vert \beta \vert\leq 3$ to obtain
\begin{align}
\dd \partial^\beta  b_n &+ \theta_R^n   P_n[(\bu_n \cdot \nabla \partial^\beta ) b_n] \dt  +\frac{1}{2}\partial^\beta  P_n \big[(\bm{\xi}_i\cdot\nabla)P_n[ (\bm{\xi}_i\cdot\nabla ) b_n]\big]  \dt
+  
\partial^\beta P_n[(\bm{\xi}_i\cdot\nabla  ) b_n ] \dd W^i_t 
=S_1 \dt 
\label{ceDiff1}
\end{align} 
where 
\begin{align*}
S_1&:= \theta_R^n \big[P_n( \bu_n \cdot \partial^\beta  \nabla) b_n
-
\partial^\beta P_n((\bu_n \cdot \nabla)  b_n )\big]
\end{align*}
is such that
\begin{align}
\label{estS1n}
\Vert S_1 \Vert_2  &\lesssim
\theta_R^n \big( \Vert \nabla \bu_n \Vert_{\infty}\Vert \nabla b_n \Vert_{2,2}
+
\Vert \nabla b_n \Vert_{\infty} \Vert \bu_n \Vert_{3,2}\big)
\lesssim
\theta_R^n \big(\Vert  b_n \Vert_{3,2}
+
(1+ \Vert q_n \Vert_{2,2})\big)
\end{align}
holds uniformly in $n\in \mathbb{N}$. Indeed, the constant only depends on  $\Vert f \Vert_{2,2}$ through Lemma \ref{lem:constrt0} applied to \eqref{constrtGar}.
If we now apply It\^o's formula to the mapping $t\mapsto \Vert\partial^\beta b_n(t)\Vert_2^2$ with $\vert \beta \vert\leq 3$, we obtain
\begin{equation}
\begin{aligned}
\label{b32estUniqY}
 \Vert\partial^\beta  b_n(t)\Vert_2^2 
&
= 
\Vert\partial^\beta  b_n(0)\Vert_2^2
- 
2\int_0^t\theta_R^n \int_{\mathbb{T}^2}[ (\bu_n \cdot \nabla \partial^\beta ) b_n  
] \partial^\beta  b_n \dx\ds 
\\&
+
2\int_0^t\int_{\mathbb{T}^2} S_1 \partial^\beta  b_n \dx\ds 
-  
2\int_0^t\int_{\mathbb{T}^2} \partial^\beta[(\bm{\xi}_i\cdot\nabla ) b_n ]
\partial^\beta  b_n \dx\dd W^i_\sigma
\\&
-
\int_0^t  \big\Vert P_n \partial^\beta  [ (\bm{\xi}_i\cdot\nabla ) b_n] \big\Vert_2^2 \ds
-
\int_0^t\int_{\mathbb{T}^2}  \partial^\beta   \big[(\bm{\xi}_i\cdot\nabla)P_n[ (\bm{\xi}_i\cdot\nabla ) b_n]\big]\, \partial^\beta  b_n \dx\ds
\end{aligned}
\end{equation}
$\mathbb{P}$-a.s for all $t\in[0,T]$. By squaring the resulting equation above, we obtain the following inequality
\begin{equation}
\begin{aligned}
\label{b32estUniqYx}
 \Vert\partial^\beta  b_n(t )\Vert_2^4 
&
\leq
\Vert\partial^\beta  b_n(0)\Vert_2^4
+ 
\bigg(2\int_0^{t }\theta_R^n \int_{\mathbb{T}^2}[ (\bu_n \cdot \nabla \partial^\beta ) b_n  
] \partial^\beta  b_n \dx\ds 
\bigg)^2 
\\&
+
\bigg(2\int_0^{t } \int_{\mathbb{T}^2} S_1  \partial^\beta  b_n \dx \bigg)^2\ds 
+  
\bigg(2\int_0^{t  }\int_{\mathbb{T}^2} \partial^\beta[(\bm{\xi}_i\cdot\nabla  ) b_n ]
\partial^\beta  b_n \dx\dd W^i_\sigma \bigg)^2
\\&
+
\bigg(
\int_0^t  \big\Vert P_n \partial^\beta  [ (\bm{\xi}_i\cdot\nabla ) b_n] \big\Vert_2^2 \ds
+
\int_0^t\int_{\mathbb{T}^2}  \partial^\beta   \big[(\bm{\xi}_i\cdot\nabla)P_n[ (\bm{\xi}_i\cdot\nabla ) b_n]\big]\, \partial^\beta  b_n \dx\ds
 \bigg)^2
\end{aligned}
\end{equation}
for all $t\in[0,T]$. However, we note that by integration by parts,
\begin{align*}
&2\int_0^{t }\theta_R^n \int_{\mathbb{T}^2}[ (\bu_n \cdot \nabla \partial^\beta ) b_n  
] \partial^\beta  b_n \dx\ds =0
\end{align*}
since our fluid is incompressible. Now, if we use \eqref{estS1n}, we obtain
\begin{align*}
\mathbb{E}
&\sum_{\vert \beta\vert\leq 3}
\bigg(\int_0^{t }\int_{\mathbb{T}^2} S_1  \partial^\beta  b_n \dx\ds \bigg)^2
\lesssim
\mathbb{E}
\int_0^{T }\theta_R^n 
\big(1+\Vert  b_n \Vert_{3,2}^4 + \Vert  q_n\Vert_{2,2}^4
\big)\ds.
\end{align*}
Next, by the Burkholder--Davis--Gundy inequality and \eqref{dataGarlerkin}, we obtain
\begin{align*}
\sum_{\vert \beta\vert\leq 3}
\mathbb{E}\bigg[\sup_{t\in[0,T]}
\bigg\vert
\int_0^{t } \int_{\mathbb{T}^2}\partial^\beta[(\bm{\xi}_i\cdot\nabla  ) b_n ]
\partial^\beta  b_n \dx\dd W^i_\sigma
\bigg\vert^2 \bigg]
&\lesssim
\sum_{\vert \beta\vert\leq 3}
\mathbb{E}\bigg[\int_0^{T } \bigg(\int_{\mathbb{T}^2} \partial^\beta[(\bm{\xi}_i\cdot\nabla  ) b_n ]
\partial^\beta  b_n   \dx \bigg)^2\dt 
\bigg]
\\
&\lesssim
\mathbb{E} \int_0^{T }
 \Vert  b_n \Vert_{3,2}^4
\dt .
\end{align*}
Finally, by using Lemma \ref{rem:xiAppendix}, we also have that
\begin{align*}
\mathbb{E}\sum_{\vert \beta\vert\leq 3} \bigg(
\int_0^t  \big\Vert P_n \partial^\beta  [ (\bm{\xi}_i\cdot\nabla ) b_n] \big\Vert_2^2 \ds
&+
\int_0^t\int_{\mathbb{T}^2}  \partial^\beta   \big[(\bm{\xi}_i\cdot\nabla)P_n[ (\bm{\xi}_i\cdot\nabla ) b_n]\big]\, \partial^\beta  b_n \dx\ds
 \bigg)^2
\lesssim
\mathbb{E} \int_0^{T }
 \Vert  b_n \Vert_{3,2}^4
\dt .
\end{align*}
By summing over $\vert \beta\vert \leq 3$,
we can therefore conclude from \eqref{b32estUniqYx} that
\begin{equation}
\begin{aligned}
\label{b32estUniqY1}
 \mathbb{E}\sup_{t\in[0,T]}&\Vert  b_n(t )\Vert_{3,2}^4 
\leq
 \mathbb{E}
\Vert  b_n(0)\Vert_{3,2}^4 
+c\,
\mathbb{E}
\int_0^{T }
\big(1+\Vert  b_n \Vert_{3,2}^4 + \Vert  q_n\Vert_{2,2}^4
\big)\ds
\end{aligned}
\end{equation}
where $c=c(R,\Vert f \Vert_{2,2})$. 
\\
Next, we apply $\partial^\beta$ to \eqref{meCutGar} with $\vert \beta\vert \leq 2$ to get
\begin{equation}
\begin{aligned}
\dd \partial^\beta q_n &+  \theta_R^nP_n(\bu_n \cdot \nabla \partial^\beta)( q_n -b_n)\dt
+
 P_n(\bu_h \cdot \nabla \partial^\beta) b_n\dt 
+
\frac{1}{2}\partial^\beta
P_n \big[(\bm{\xi}_i\cdot\nabla)P_n[ (\bm{\xi}_i\cdot\nabla)(q_n - 2b_n)]\big]\dt
\nonumber
\\&+
\partial^\beta P_n(\bm{\xi}_i\cdot\nabla ) (q_n - b_n) \dd W^i= (S_2 +S_3+S_4)\dt
\label{meDiff1}
\end{aligned}
\end{equation}
where
\begin{align*}
S_2&:= -\theta_R^n \big[P_n(\bu_n \cdot \partial^\beta  \nabla) b_n
-
\partial^\beta P_n(\bu_n \cdot \nabla  b_n ) \big],
\\
S_3&:= \theta_R^n \big[P_n(\bu_n \cdot \partial^\beta  \nabla) q_n
-
\partial^\beta P_n(\bu_n \cdot \nabla  q_n ) \big],
\\
S_4&:= - \big[P_n(\bu_h \cdot \partial^\beta  \nabla) b_n
-
\partial^\beta P_n(\bu_h \cdot \nabla  b_n ) \big]
\end{align*}
are such that
\begin{align}
\Vert S_2 \Vert_2 &\lesssim
\theta_R^n \big( \Vert \nabla \bu_n \Vert_{\infty}\Vert \nabla b_n \Vert_{1,2}
+
\Vert \nabla b_n \Vert_{\infty} \Vert \bu_n \Vert_{2,2}\big)
\lesssim
\theta_R^n \big( \Vert  b_n \Vert_{3,2}
+
 (1+\Vert q_n \Vert_{2,2})\big),
\label{estS2}
\\
\Vert S_3 \Vert_2 &\lesssim
\theta_R^n \big( \Vert \nabla \bu_n \Vert_{\infty}\Vert \nabla q_n \Vert_{1,2}
+
\Vert \nabla q_n \Vert_{4} \Vert \nabla\bu_n \Vert_{1,4}\big)
\nonumber
\\&
\lesssim
\theta_R^n \big( \Vert \nabla \bu_n \Vert_{\infty}\Vert \nabla q_n \Vert_{1,2}
+
\Vert  q_n \Vert_{\infty}^{1/2}\Vert \nabla q_n \Vert_{1,2}^{1/2} \Vert \nabla\bu_n \Vert_{\infty}^{1/2} \Vert\nabla \bu_n \Vert_{2,2}^{1/2}\big)
\nonumber
\\&
\lesssim
\theta_R^n \big( \Vert \nabla \bu_n \Vert_{\infty}\Vert \nabla q_n \Vert_{1,2}
+
\Vert \nabla \bu_n \Vert_{\infty}\Vert \nabla q_n \Vert_{1,2}
+
\Vert  q_n \Vert_{\infty} \Vert \bu_n \Vert_{3,2}\big)
\nonumber
\\&
\lesssim
\theta_R^n \big( 1+\Vert  q_n \Vert_{2,2}
\big),
\label{estS3}
\\
\Vert S_4 \Vert_2 &\lesssim
\big( \Vert \nabla \bu_h  \Vert_{\infty}\Vert \nabla b_n \Vert_{1,2}
+
\Vert \nabla b_n \Vert_{\infty} \Vert \bu_h \Vert_{2,2}\big)
\lesssim
\big(  \Vert  b_n \Vert_{3,2}
+
  1\big).
\label{estS4}
\end{align}
The constants only depend on $R$, $\Vert \bu_h \Vert_{3,2}$ and $\Vert f \Vert_{2,2}$ and in particular, are uniform in $n\in \mathbb{N}$. 
Similar to \eqref{b32estUniqYx}, if we apply It\^o's formula to the mapping $t\mapsto \Vert\partial^\beta q_n(t)\Vert_2^2$ and square the resulting equation, we obtain
\begin{equation}
\begin{aligned}
\label{q32estUniqYx}
 \Vert&\partial^\beta  q_n(t  )\Vert_2^4 
\leq
\Vert\partial^\beta  q_n(0)\Vert_2^4
+ 
\bigg(2\int_0^{t }\theta_R^n \int_{\mathbb{T}^2}[ (\bu_n \cdot \nabla \partial^\beta )q_n  
] \partial^\beta  q_n \dx\ds \bigg)^2 
\\&+ 
\bigg(2\int_0^{t }\theta_R^n \int_{\mathbb{T}^2}[ (\bu_n \cdot \nabla \partial^\beta ) b_n  
] \partial^\beta  q_n \dx\ds \bigg)^2 
+ 
\bigg(2\int_0^{t } \int_{\mathbb{T}^2}[ (\bu_h \cdot \nabla \partial^\beta )b_n  
] \partial^\beta q_n \dx\ds \bigg)^2 
\\&+
\bigg(2\int_0^{t }\int_{\mathbb{T}^2}(S_2 +S_3+S_4) \partial^\beta  q_n \dx\ds \bigg)^2
+  
\bigg(2\int_0^{t }\int_{\mathbb{T}^2}\partial^\beta [(\bm{\xi}_i\cdot\nabla  ) (q_n-b_n) ]
\partial^\beta  q_n \dx\dd W^i_\sigma \bigg)^2
\\&
+
\bigg( \int_0^t \int_{\mathbb{T}^2}\partial^\beta
\big[(\bm{\xi}_i\cdot\nabla)P_n[ (\bm{\xi}_i\cdot\nabla)(q_n - 2b_n)]\big] \partial^\beta  q_n \dx\ds  +
\int_0^t \big\Vert P_n \partial^\beta  [ (\bm{\xi}_i\cdot\nabla)(q_n - b_n)]\big\Vert_2^2 \ds   \bigg)^2.
\end{aligned}
\end{equation}
By integration by parts,
\begin{align*}
&2\int_0^{t }\theta_R^n \int_{\mathbb{T}^2}[ (\bu_n \cdot \nabla \partial^\beta ) q_n  
] \partial^\beta  q_n \dx\ds =0
\end{align*}
since our fluid is incompressible.
Next, by applying Lemma \ref{lem:constrt0} to \eqref{constrtGar}, we have that
\begin{align*}
\mathbb{E}
&\sum_{\vert \beta\vert\leq 2}
\bigg[
\bigg(\int_0^{t }\theta_R^n \int_{\mathbb{T}^2}[ (\bu_n \cdot \nabla \partial^\beta ) b_n  
] \partial^\beta  q_n \dx\ds \bigg)^2
+
\bigg(\int_0^{t } \int_{\mathbb{T}^2}[ (\bu_h \cdot \nabla \partial^\beta ) b_n  
] \partial^\beta  q_n \dx\ds \bigg)^2
\bigg]
\\&\lesssim
\mathbb{E}
 \int_0^{t }
\theta_R^n \big( 1+\Vert b_n \Vert_{3,2}^4 + \Vert q_n \Vert_{2,2}^4 \big)\ds 
\end{align*}
holds with a constant depending only on $\Vert \bu_h \Vert_{3,2}$ and $\Vert f \Vert_{2,2}$, and from \eqref{estS2}--\eqref{estS4}
\begin{equation}
\begin{aligned}
\mathbb{E}
&\sum_{\vert \beta\vert\leq 2}
\bigg(\int_0^{t }\int_{\mathbb{T}^2}(S_2 +S_3+S_4) \partial^\beta  q_n \dx\ds 
\bigg)^2
\lesssim
\mathbb{E}
 \int_0^{t }
\theta_R^n \big( 1+\Vert b_n \Vert_{3,2}^4 + \Vert q_n \Vert_{2,2}^4 \big)\ds.
\end{aligned}
\end{equation}
Also, by the Burkholder--Davis--Gundy inequality and \eqref{dataGarlerkin}, we have that
\begin{equation}
\begin{aligned}
\sum_{\vert \beta\vert\leq 2}
&\mathbb{E}\bigg[\sup_{t\in[0,T]}
\bigg\vert
2\int_0^{t }\int_{\mathbb{T}^2}\partial^\beta [(\bm{\xi}_i\cdot\nabla  ) (q_n-b_n) ]
\partial^\beta  q_n \dx\dd W^i_\sigma 
\bigg\vert \bigg]^2
\\
&\lesssim
\sum_{\vert \beta\vert\leq 2}
\mathbb{E}\bigg[\int_0^{T } \bigg(\int_{\mathbb{T}^2} \partial^\beta [(\bm{\xi}_i\cdot\nabla  ) q_n ]
\partial^\beta  q_n \dx \bigg)^2\dt 
\bigg]
\\&
+
\mathbb{E}\bigg[\int_0^{T } \bigg(\int_{\mathbb{T}^2} \partial^\beta [(\bm{\xi}_i\cdot\nabla  ) b_n ]
\partial^\beta  q_n \dx \bigg)^2\dt 
\bigg]
\\
&\lesssim
\mathbb{E} \int_0^{T } \big( \Vert  q_n \Vert_{2,2}^4 +\Vert  b_n \Vert_{3,2}^4 \big)
\dt 
\end{aligned}
\end{equation}
holds uniformly in $n \in \mathbb{N}$.
Finally, we also obtain from Lemma \ref{rem:xiAppendix} and the continuity of the projections,
\begin{equation}
\begin{aligned}
\mathbb{E}
\sum_{\vert \beta\vert\leq 2}
\bigg( \int_0^t \int_{\mathbb{T}^2}&\partial^\beta
\big[(\bm{\xi}_i\cdot\nabla)P_n[ (\bm{\xi}_i\cdot\nabla)(q_n - 2b_n)]\big] \partial^\beta  q_n \dx\ds  
+
\int_0^t \big\Vert P_n \partial^\beta  [ (\bm{\xi}_i\cdot\nabla)(q_n - b_n)]\big\Vert_2^2 \ds   \bigg)^2
\\&\lesssim
\mathbb{E} \int_0^{T } \big( \Vert  q_n \Vert_{2,2}^4 +\Vert  b_n \Vert_{3,2}^4 \big)
\dt.
\end{aligned}
\end{equation}
By summing over $\vert \beta\vert \leq 2$,
we can therefore conclude from \eqref{q32estUniqYx} that
\begin{equation}
\begin{aligned}
\label{q32estUniqY1}
 \mathbb{E}\sup_{t\in[0,T]}&\Vert  q_n(t )\Vert_{2,2}^4 
\leq
 \mathbb{E}\Vert  q_n(0 )\Vert_{2,2}^4
 +
 c\,
\mathbb{E}
 \int_0^{t }
 \big( 1+\Vert b_n \Vert_{3,2}^4 + \Vert q_n \Vert_{2,2}^4 \big)\ds 
\end{aligned}
\end{equation}
where $c=c(R,\Vert \bu_h \Vert_{3,2},\Vert f \Vert_{2,2})$. 
We can now sum up \eqref{b32estUniqY1} and \eqref{q32estUniqY1} and obtain
\begin{equation}
\begin{aligned}
 \mathbb{E}\sup_{t\in[0,T]}&\big(1+\Vert  b_n(t )\Vert_{3,2}^4+\Vert  q_n(t )\Vert_{2,2}^4\big) 
\leq
 \mathbb{E}
\Vert  b_n(0)\Vert_{3,2}^4 
+
 \mathbb{E}
\Vert  q_n(0)\Vert_{2,2}^4 
\\&
+c\,
\mathbb{E}
 \int_0^{T }
(\theta_R^n)^2 \big(1+ \Vert b_n(\sigma) \Vert_{3,2}^4 + \Vert q_n(\sigma) \Vert_{2,2}^4 \big)\ds
\end{aligned}
\end{equation}
where $c=c(R,\Vert \bu_h \Vert_{3,2},\Vert f \Vert_{2,2})$. 
By  Gr\"onwall’s lemma,
\begin{equation}
\begin{aligned}
 \mathbb{E}\sup_{t\in[0,T]}\big(\Vert  b_n(t )\Vert_{3,2}^4+\Vert  q_n(t )\Vert_{2,2}^4\big) 
&\leq
c\,\Big(
 \mathbb{E}
\Vert  b_n(0)\Vert_{3,2}^4 
+
 \mathbb{E}
\Vert  q_n(0)\Vert_{2,2}^4 
\Big)
\leq
c\,\Big(
 \mathbb{E}
\Vert  b_0\Vert_{3,2}^4 
+
 \mathbb{E}
\Vert  q_{0}\Vert_{2,2}^4 
\Big).
\end{aligned}
\end{equation}
From \eqref{dataGarlerkin}, it follows that
\begin{equation}
\begin{aligned}
\label{unifGarlen}
\sup_{n\in\mathbb{N}}
 \mathbb{E}\sup_{t\in[0,T]}&\big(\Vert  b_n(t)\Vert_{3,2}^4+\Vert  q_n(t )\Vert_{2,2}^4\big) 
\lesssim1.
\end{aligned}
\end{equation}

\subsubsection{H\"older continuity in time}
We can now proceed to show H\"older continuity in time of $\{(b_n,\omega_n)\}_{n\in \mathbb{N}}$. It will follow from the next lemma in conjunction with Kolmogorov's continuity result.
\begin{lemma}
\label{lem:fractTime}
For $((\bm{\xi}_i)_{i\in \mathbb{N}}, \bu_h, f, b_0, q_0)$ satisfying \eqref{dataGarlerkin}, we let  $\{(b_n,q_n)\}_{n\in \mathbb{N}}$ be the corresponding family of Galerkin approximations solving \eqref{ceCutGar}--\eqref{constrtGar}.  Then    for all $s,t\in[0,T]$, $s< t$, the estimate
\begin{align}
\label{unifEst1}
\mathbb{E}
 \Vert b_n(t) - b_n(s)\Vert_{2}^4+\mathbb{E}
 \Vert q_n(t) - q_n(s)\Vert_{2}^4 \lesssim_{R,T} \vert t- s \vert^2
\end{align}
holds uniformly in $n\in \mathbb{N}$.
\end{lemma}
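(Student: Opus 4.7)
My plan is to write both $b_n(t)-b_n(s)$ and $q_n(t)-q_n(s)$ in integral form directly from the Galerkin equations \eqref{ceCutGar}--\eqref{meCutGar}, split each difference into a finite number of Bochner (drift) integrals and It\^o stochastic integrals, and estimate the $L^4(\Omega;L^2(\mathbb{T}^2))$-norm of each piece separately on $[s,t]$. The uniform bounds \eqref{unifGarlen} do all the heavy lifting, while the cut-off $\theta_R^n$ and the summability $\sum_i\|\bm{\xi}_i\|_{4,\infty}<\infty$ ensure the constants stay uniform in $n$.

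For the drift pieces I would use Minkowski's inequality in time to pull the $L^2(\mathbb{T}^2)$-norm inside the time integral and then estimate pointwise in $\sigma$. For instance, the advection term satisfies $\|\theta_R^n P_n[(\bu_n\cdot\nabla)b_n]\|_2\leq\theta_R^n\|\bu_n\|_\infty\|\nabla b_n\|_2$, which by the continuity of $P_n$, the cut-off, Lemma \ref{lem:constrt0} applied to \eqref{constrtGar}, and \eqref{unifGarlen}, is bounded by a deterministic constant $C(R,\|\bu_h\|_{3,2},\|f\|_{2,2})\,(1+\|b_n\|_{3,2}+\|q_n\|_{2,2})$. The bathymetry term $(\bu_h\cdot\nabla)b_n$, the coupled advection of $q_n-b_n$, and the It\^o-correction terms $\sum_i P_n\big[(\bm{\xi}_i\cdot\nabla)P_n[(\bm{\xi}_i\cdot\nabla)\,\cdot\,]\big]$ are handled identically, the last one using the absolute convergence guaranteed by $\sum_i\|\bm{\xi}_i\|_{4,\infty}<\infty$. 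After raising to the fourth power, integrating, and using Jensen (or H\"older) in time, each drift piece contributes at most $|t-s|^4\,C_R\,\mathbb{E}\sup_{\sigma\in[0,T]}\bigl(1+\|b_n(\sigma)\|_{3,2}^4+\|q_n(\sigma)\|_{2,2}^4\bigr)$, which is finite and $n$-uniform by \eqref{unifGarlen}.

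The stochastic pieces are the only ones giving exactly $|t-s|^2$ and are handled by the Burkholder--Davis--Gundy inequality:
\[
\mathbb{E}\Big\|\int_s^t G_i\,dW^i_\sigma\Big\|_2^4 \lesssim \mathbb{E}\Big(\int_s^t \sum_i \|G_i\|_2^2\,d\sigma\Big)^2.
\]
Applying this with $G_i=P_n[(\bm{\xi}_i\cdot\nabla)b_n]$ in the $b_n$ equation and $G_i=P_n[(\bm{\xi}_i\cdot\nabla)(q_n-b_n)]$ in the $q_n$ equation, combining with $\sum_i\|(\bm{\xi}_i\cdot\nabla)f\|_2^2\lesssim\|f\|_{1,2}^2\big(\sum_i\|\bm{\xi}_i\|_\infty\big)^2$ and Cauchy--Schwarz in time, yields
\[
\mathbb{E}\Big\|\int_s^t G_i\,dW^i_\sigma\Big\|_2^4 \lesssim |t-s|^2\,\mathbb{E}\sup_{\sigma\in[0,T]}\bigl(\|b_n(\sigma)\|_{1,2}^4+\|q_n(\sigma)\|_{1,2}^4\bigr),
\]
which is $\lesssim_T 1$ uniformly in $n$ by \eqref{unifGarlen}.

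Collecting everything, the drift pieces contribute $\lesssim_{R,T}|t-s|^4\leq T^2|t-s|^2$ and the stochastic pieces $\lesssim_{R,T}|t-s|^2$, giving \eqref{unifEst1}. I do not anticipate a genuine obstacle: this is an essentially routine consequence of the a priori bounds in Section \ref{subsub:uniEst} together with BDG, the only bookkeeping items being (i) the $n$-uniformity of all constants, which follows from the contractivity of $P_n$ on $L^2$ and the $\ell^1$-summability of $(\|\bm{\xi}_i\|_{4,\infty})_{i\in\mathbb{N}}$, and (ii) keeping track of the several extra terms appearing in the $q_n$ equation (the bathymetry drift $(\bu_h\cdot\nabla)b_n$ and the mixed It\^o corrections involving $(q_n-2b_n)$), each of which is treated by the same scheme as the $b_n$ terms.
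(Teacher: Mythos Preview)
Your proposal is correct and follows essentially the same route as the paper: write the increments $b_n(t)-b_n(s)$ and $q_n(t)-q_n(s)$ directly from the Galerkin equations, bound each drift integral by $|t-s|^4\lesssim T^2|t-s|^2$ via H\"older/Minkowski and the uniform estimate \eqref{unifGarlen}, and bound each stochastic integral by $|t-s|^2$ via Burkholder--Davis--Gundy together with \eqref{unifGarlen}. The paper carries out the details only for $q_n$ (declaring the $b_n$ case easier) and organizes the pieces as $I_1,\ldots,I_4$, but the structure and ingredients are identical to yours.
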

\begin{proof}
In the following, we will only show that
\begin{align}\mathbb{E}
 \Vert q_n(t) - q_n(s)\Vert_{2}^4 \lesssim_{R,T} \vert t- s \vert^2
\end{align}
since the estimate for the buoyancy difference is similar and in fact easier.
\\
From \eqref{meCutGar}, we observe that for any $s,t\in[0,T]$, $s< t$
\begin{align*}
\mathbb{E}
 \Vert q_n(t) - q_n(s)\Vert_{2}^{4}
 &\leq
 \mathbb{E}
 \Big\Vert
\int_s^t
\theta_R^n P_n(\bu_n \cdot \nabla)( q_n -b_n) \dd \sigma
\Big\Vert_2^4
+ \mathbb{E}
 \Big\Vert
\int_s^t
 P_n\frac{1}{2}[(\bm{\xi}_i\cdot\nabla)(\bm{\xi}_i\cdot\nabla) ](q_n - b_n) \dd \sigma
\Big\Vert_2^4
\\&
+ \mathbb{E}
 \Big\Vert
\int_s^t
 P_n(\bu_h \cdot \nabla)b_n \dd \sigma
\Big\Vert_2^4
+
 \mathbb{E}
 \Big\Vert
 \int_s^t
 P_n(\bm{\xi}_i\cdot\nabla) (q_n - b_n)  \dd W^i_\sigma
\Big\Vert_2^4
\\&
=:I_1+I_2+I_3+I_4
\end{align*}
holds $\mathbb{P}$-a.s. where by the continuity property of $P_n$,
\begin{equation}
 \begin{aligned}
I_1
 &\lesssim
 \vert t-s \vert^4\,
\mathbb{E} 
\sup_{\sigma\in [0,T]}
\theta_R^n
\Vert
(\bu_n(\sigma) \cdot \nabla)( q_n(\sigma) -b_n(\sigma))
\Vert_2^4
\\&\lesssim T^2
\vert t-s \vert^2\,
\mathbb{E} 
\sup_{\sigma\in [0,T]}
\theta_R^n
\Vert
\bu_n(\sigma) 
\Vert_\infty^4
\Vert \nabla( q_n(\sigma) -b_n(\sigma))
\Vert_2^4
\\&
\lesssim_{R,T}
 \vert t-s \vert^2\,
\mathbb{E}\sup_{\sigma\in[0,T]}\big(\Vert  b_n(\sigma)\Vert_{3,2}^4+\Vert  q_n(\sigma )\Vert_{2,2}^4\big) 
\\&
\lesssim_{R,T}
 \vert t-s \vert^2
\end{aligned}
\end{equation} 
where the last estimate follow from \eqref{unifGarlen}. The same estimate holds for $I_2$ and $I_3$.
For the stochastic term, we use Burkholder--Davis--Gundy inequality and the continuity of $P_n$ to obtain
\begin{equation}
\begin{aligned}
I_4
&\lesssim
\mathbb{E}
\bigg(
 \int_s^t
 \Vert 
(\bm{\xi}_i\cdot\nabla) (q_n - b_n)\Vert_2^{2}\dd\sigma
\bigg)^2
\\&
\lesssim_R
\vert t-s\vert^2\,
\mathbb{E}
 \sup_{\sigma \in[0,T]}
 \Vert 
(\bm{\xi}_i\cdot\nabla) (q_n - b_n)\Vert_2^4
\\&
\lesssim_{R}
 \vert t-s \vert^2\,
\mathbb{E}\sup_{\sigma\in[0,T]}\big(\Vert  b_n(\sigma)\Vert_{3,2}^4+\Vert  q_n(\sigma )\Vert_{2,2}^4\big) 
\\&
\lesssim_{R}
 \vert t-s \vert^2.
\end{aligned}
\end{equation}
Summing the two estimates above yields our desired result. 
\end{proof}
We observe that due to Kolmogorov's continuity theorem, an immediate consequence of Lemma \ref{lem:fractTime} is the following.
\begin{corollary}
\label{coe:holderCont}
For  $((\bm{\xi}_i)_{i\in \mathbb{N}}, \bu_h, f, b_0, q_0)$ satisfying \eqref{dataGarlerkin}, we let  $\{(b_n,q_n)\}_{n\in \mathbb{N}}$ be the corresponding family of Galerkin approximations solving \eqref{ceCutGar}--\eqref{constrtGar}.   Then there exists a representation of $(b_n,q_n)$  with $\mathbb{P}$-a.s. H\"older continuous trajectories
of exponent $\gamma\in(0, 1/4)$. Furthermore, the estimate
\begin{align}
\label{unifEstHolder}
\mathbb{E}
 \Vert b_n\Vert_{C^\gamma([0,T];L^2(\mathbb{T}^2))}^4
 +
 \mathbb{E}
 \Vert q_n\Vert_{C^\gamma([0,T];L^2(\mathbb{T}^2))}^4
  \lesssim 1
\end{align}
holds uniformly in $n\in \mathbb{N}$.
\end{corollary}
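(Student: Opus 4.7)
The plan is to read Corollary \ref{coe:holderCont} as a direct application of the quantitative Kolmogorov continuity theorem applied to the $L^2(\mathbb{T}^2)$-valued processes $t\mapsto b_n(t)$ and $t\mapsto q_n(t)$, using Lemma \ref{lem:fractTime} as input. Since Lemma \ref{lem:fractTime} supplies $\mathbb{E}\|b_n(t)-b_n(s)\|_2^4 + \mathbb{E}\|q_n(t)-q_n(s)\|_2^4 \lesssim_{R,T} |t-s|^2$, the increment exponent is $1+\alpha$ with $\alpha = 1$ at moment order $p = 4$. The classical Kolmogorov--\v{C}entsov theorem then produces continuous modifications whose trajectories are $\mathbb{P}$-a.s.\ H\"older of any exponent $\gamma < \alpha/p = 1/4$, which gives the first assertion.

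For the uniform-in-$n$ moment bound on the H\"older norm, I would use the Besov/Slobodeckij route, which is cleaner than extracting constants from the dyadic Kolmogorov proof. Fix $\alpha \in (1/4, 1/2)$ and set
\[
[f]_{W^{\alpha,4}(0,T;L^2)}^4 := \int_0^T\!\int_0^T \frac{\|f(t)-f(s)\|_2^4}{|t-s|^{1+4\alpha}}\,\mathrm{d}s\,\mathrm{d}t.
\]
Fubini together with Lemma \ref{lem:fractTime} yields
\[
\mathbb{E}\,[b_n]_{W^{\alpha,4}(0,T;L^2)}^4 \;\lesssim_{R,T}\; \int_0^T\!\int_0^T |t-s|^{1-4\alpha}\,\mathrm{d}s\,\mathrm{d}t \;\lesssim_T\; 1
\]
provided $4\alpha < 2$, i.e.\ $\alpha < 1/2$; the same bound holds for $q_n$. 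Combining this with the uniform $L^2_t L^2_x$ control that is implied by \eqref{unifGarlen} gives a uniform bound on $\mathbb{E}\|b_n\|_{W^{\alpha,4}(0,T;L^2)}^4$ and similarly for $q_n$. The Sobolev embedding $W^{\alpha,4}(0,T;L^2) \hookrightarrow C^{\alpha-1/4}(0,T;L^2)$, valid because $\alpha > 1/4$, then produces the advertised bound with H\"older exponent $\gamma = \alpha - 1/4 \in (0, 1/4)$.

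I do not expect any serious obstacle here: the argument is essentially bookkeeping once Lemma \ref{lem:fractTime} is in hand. The only point that requires a word of care is the uniformity in $n$ of the constants. This is automatic, since neither the Kolmogorov--\v{C}entsov extraction nor the Besov embedding $W^{\alpha,4}\hookrightarrow C^{\alpha-1/4}$ sees the Galerkin truncation: they are purely functional-analytic statements on the target Banach space $L^2(\mathbb{T}^2)$, and all dependence on $n$ has already been absorbed into the right-hand side of Lemma \ref{lem:fractTime} via \eqref{unifGarlen}. The constants thus depend only on $R$, $T$, and the data in \eqref{dataGarlerkin}, exactly as needed.
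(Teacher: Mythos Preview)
Your proposal is correct and follows essentially the same approach as the paper, which simply invokes Kolmogorov's continuity theorem on the increment bound of Lemma \ref{lem:fractTime} without further detail. Your Besov--Slobodeckij route for the quantitative moment bound \eqref{unifEstHolder} is a standard and clean way to make that invocation precise, and the uniformity-in-$n$ observation is exactly right.
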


\subsubsection{Compactness}
Our goal now is to define a family of measures associated with $\{(b_n,q_n ,\{W^i\}_{i\in \mathbb{N}})\}_{n\in \mathbb{N}}$ and then show that these measures are tight in suitable spaces.
In order to achieve our goal, we first define the path space
\begin{align*}
\chi=C([0,T];W^{s,2}(\mathbb{T}^2)) \times C([0,T];W^{s-1,2}(\mathbb{T}^2))
\times W^{3,2}(\mathbb{T}^2)
\times W^{2,2}(\mathbb{T}^2)
\times C([0,T])^\mathbb{N}.
\end{align*}
where $s\in(2,3)$ is close enough to $3$. Here, $S^\mathbb{N}$ denotes the $\mathbb{N}$th product space of $S$. Also note that  the embeddings $W^{s,2}(\mathbb{T}^2) \hookrightarrow W^{1,\infty}(\mathbb{T}^2)$ and $W^{s-1,2}(\mathbb{T}^2) \hookrightarrow L^{\infty}(\mathbb{T}^2)$ are continuous.
We then define the following probability measures:
\begin{align*}
&\mathcal{L}[{b_n}] \text{ is the law of  } b_n \text{  on } C([0,T];W^{s,2}(\mathbb{T}^2)),
\\  
&\mathcal{L}[{q_n}] \text{ is the law of } q_n \text{  on }C([0,T];W^{s-1,2}(\mathbb{T}^2)),
\\
&\mathcal{L}[b_{0,n}] \text{ is the law of  } b_{0,n} \text{  on } W^{3,2}(\mathbb{T}^2),
\\  
&\mathcal{L}[q_{0,n}] \text{ is the law of } q_{0,n} \text{  on } W^{2,2}(\mathbb{T}^2),
\\
&\mathcal{L}[W^i] \text{  is the law of } W^i \text{  on }C([0,T]),
\end{align*}
for each $i\in \mathbb{N}$. 
Now, by combining \eqref{unifGarlen} and Corollary \ref{coe:holderCont} with the abstract Arzel\'a--Ascoli theorem \cite[Theorem 1.1.1.]{Hof}, we can obtain the following result.
\begin{lemma}
The family of laws $\{ \mathcal{L}[{b_n}] :n \in \mathbb{N}\}$  is tight on  $C([0,T];W^{s,2}(\mathbb{T}^2))$.
 The family of laws $\{ \mathcal{L}[{q_n}] :n \in \mathbb{N}\}$  is tight on  $C([0,T];W^{s-1,2}(\mathbb{T}^2))$.
 The family of laws $\{ \mathcal{L}[b_{0,n}] :n \in \mathbb{N}\}$  is tight on  $W^{3,2}(\mathbb{T}^2)$.
 The family of laws $\{ \mathcal{L}[q_{0,n}] :n \in \mathbb{N}\}$  is tight on  $W^{2,2}(\mathbb{T}^2)$.
\end{lemma}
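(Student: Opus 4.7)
The plan is to derive each of the four tightness statements by exhibiting, for every $\varepsilon>0$, a compact set in the target Polish space that carries all but $\varepsilon$-mass of the laws uniformly in $n$. The trajectory-level claims will use the abstract Arzelà--Ascoli theorem of \cite[Theorem 1.1.1]{Hof} together with the uniform moment bound \eqref{unifGarlen} and Corollary \ref{coe:holderCont}, combined with Markov's inequality; the initial-data claims will follow from a.s.\ convergence of the projections on the chosen orthonormal basis.

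For $\{\mathcal{L}[b_n]\}$, I would introduce, for each $R>0$, the set
\[
B_R := \bigl\{ f\in C([0,T];W^{s,2}(\mathbb{T}^2)) : \|f\|_{L^\infty(0,T;W^{3,2})} + \|f\|_{C^\gamma([0,T];L^2)} \le R \bigr\},
\]
and argue that its closure is compact in $C([0,T];W^{s,2}(\mathbb{T}^2))$. Pointwise-in-$t$ precompactness uses the compact Sobolev embedding $W^{3,2}(\mathbb{T}^2)\hookrightarrow\hookrightarrow W^{s,2}(\mathbb{T}^2)$, valid since $s<3$. Equicontinuity in the $W^{s,2}$-norm comes from real interpolation: setting $\theta := 1 - s/3 \in (0,1)$,
\[
\|f(t)-f(\sigma)\|_{W^{s,2}} \lesssim \|f(t)-f(\sigma)\|_{L^2}^{\theta}\,\|f(t)-f(\sigma)\|_{W^{3,2}}^{1-\theta} \lesssim R\,|t-\sigma|^{\gamma\theta},
\]
which is uniform over $B_R$. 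Markov's inequality then gives
\[
\sup_{n\in\mathbb{N}}\mathbb{P}(b_n\notin B_R) \lesssim R^{-4}\,\sup_n \mathbb{E}\bigl[\|b_n\|_{L^\infty(0,T;W^{3,2})}^{4} + \|b_n\|_{C^\gamma([0,T];L^2)}^{4}\bigr] \lesssim R^{-4},
\]
using \eqref{unifGarlen} and Corollary \ref{coe:holderCont}. Choosing $R$ large enough establishes tightness of $\{\mathcal{L}[b_n]\}$ on $C([0,T];W^{s,2}(\mathbb{T}^2))$. The argument for $\{\mathcal{L}[q_n]\}$ on $C([0,T];W^{s-1,2}(\mathbb{T}^2))$ is identical, with $W^{3,2}$ replaced by $W^{2,2}$, $W^{s,2}$ by $W^{s-1,2}$, and the interpolation exponent recomputed as $1-(s-1)/2$.

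For the initial-data laws, I would exploit the fact that the orthonormal basis $\{\phi_k\}$ on the torus may be chosen (Fourier modes / eigenfunctions of $\Delta - 1$) so that $P_n\to\mathrm{Id}$ strongly on every $W^{k,2}(\mathbb{T}^2)$. Consequently $b_{0,n}=P_n b_0\to b_0$ almost surely in $W^{3,2}(\mathbb{T}^2)$ and $q_{0,n}=P_n q_0\to q_0$ almost surely in $W^{2,2}(\mathbb{T}^2)$. Almost sure convergence implies convergence in law, and a convergent sequence of Borel probability measures on a Polish space is tight by Prokhorov's theorem.

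The only subtle point is fixing $s\in(2,3)$ compatibly with all of the structural constraints imposed on the path space $\chi$: one needs the embeddings $W^{s,2}\hookrightarrow W^{1,\infty}$ and $W^{s-1,2}\hookrightarrow L^\infty$ (which requires $s>2$), the compact embeddings $W^{3,2}\hookrightarrow\hookrightarrow W^{s,2}$ and $W^{2,2}\hookrightarrow\hookrightarrow W^{s-1,2}$ (which requires $s<3$), and the interpolation exponents $1-s/3$ and $1-(s-1)/2$ strictly positive. Any $s\in(2,3)$ meets all three simultaneously, which is the content of the excerpt's requirement that $s$ be ``close enough to $3$.''
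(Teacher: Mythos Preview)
Your proposal is correct and follows essentially the same route as the paper: compact sets built from the $L^\infty_t W^{3,2}_x$ (resp.\ $W^{2,2}_x$) bound together with the $C^\gamma_t L^2_x$ modulus, then Chebyshev/Markov with \eqref{unifGarlen} and Corollary~\ref{coe:holderCont}; for the initial data, a.s.\ convergence of $P_n$ plus Prokhorov. The only cosmetic difference is that you verify the compact embedding $C([0,T];W^{3,2})\cap C^\gamma([0,T];L^2)\hookrightarrow\hookrightarrow C([0,T];W^{s,2})$ by hand via interpolation and Arzel\`a--Ascoli, whereas the paper simply cites it from \cite[Proposition~5.2.5]{Hof}.
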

\begin{proof}
Due to the compact embedding
\begin{align*}
C([0,T];W^{3,2}(\mathbb{T}^2)) \cap C^\gamma([0,T];L^2(\mathbb{T}^2))
\hookrightarrow
C([0,T];W^{s,2}(\mathbb{T}^2)),
\end{align*}
which holds for $\gamma>0$ and $s<3$, see \cite[Proposition 5.2.5.]{Hof},  it follows that for any fixed $L>0$, the set
\begin{align*}
B_L:=\big\{b \in C([0,T];W^{3,2}(\mathbb{T}^2)) \cap C^\gamma([0,T];L^2(\mathbb{T}^2))
\,:\, \Vert b\Vert_{C([0,T];W^{3,2}(\mathbb{T}^2)) \cap C^\gamma([0,T];L^2(\mathbb{T}^2))}
\leq L \big\}
\end{align*}
is compact in $C([0,T];W^{s,2}(\mathbb{T}^2))$. Moreover, by  Chebyshev’s inequality,
\begin{align*}
\mathcal{L}[b_n](B_L^c) &=\mathbb{P} \big( \Vert b_n\Vert_{C([0,T];W^{3,2}(\mathbb{T}^2)) \cap C^\gamma([0,T];L^2(\mathbb{T}^2))}> L \big)
\\
&\leq\mathbb{P} \Big( \Vert b_n\Vert_{C([0,T];W^{3,2}(\mathbb{T}^2)) }> L/2 \big)
+
\mathbb{P} \big( \Vert b_n\Vert_{C^\gamma([0,T];L^2(\mathbb{T}^2))}> L/2 \Big)
\\&\leq \frac{16}{L^4}\mathbb{E}
\Big(
\Vert  b_n(t)\Vert_{C([0,T];W^{3,2}(\mathbb{T}^2))}^4
+
\Vert  b_n(t)\Vert_{C^\gamma([0,T];L^2(\mathbb{T}^2))}^4
\Big)
\end{align*}
so that  by \eqref{unifGarlen} and  Corollary \ref{coe:holderCont} ,
\begin{align*}
\sup_{n\in \mathbb{N}}\mathcal{L}[b_n](B_L^c)
\lesssim \frac{1}{L} \rightarrow 0
\end{align*}
as $L\rightarrow\infty$. This completes the proof of the first part of the lemma. The second part can be done analogously.
For the third part, we use the continuity property of the projection $P_n$ to get that $b_{0,n}:=P_nb_0 \rightarrow b_0$ a.s. This implies convergence in law which further implies tightness due to Prokhorov's theorem. The fourth part follows the same argument.
\end{proof}
We also note that since for each $i\in \mathbb{N}$, $\{ \mathcal{L}[W^i] \}$ is a singleton, it is trivially 
weakly compact and hence tight on $C([0,T])$. 
An immediate consequence of this remark and the lemma above is that:
\begin{corollary}
\label{cor:tightnessSmooth}
The family of measures $\{ \mathcal{L}[{b_n,q_n,b_{0,n}, q_{0,n},\{W^i\}_{i\in \mathbb{N}}}] :n \in \mathbb{N}\}$  is tight on  $\chi$.
\end{corollary}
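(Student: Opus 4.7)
\medskip

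\noindent\textbf{Proof plan for Corollary \ref{cor:tightnessSmooth}.} The statement is a purely soft consequence of the marginal tightness already proved in the preceding lemma, together with the fact that each $W^i$ has a fixed law. My plan is to produce, for each $\varepsilon>0$, a compact subset of $\chi$ whose complement carries mass at most $\varepsilon$ uniformly in $n$, using that $\chi$ is a countable product of Polish spaces and that compact sets in such a product may be built componentwise (Tychonoff).

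\emph{Step 1: componentwise compacts.} Given $\varepsilon>0$, use the preceding lemma to pick compact sets
\begin{equation*}
K_b\subset C([0,T];W^{s,2}(\mathbb{T}^2)),\qquad
K_q\subset C([0,T];W^{s-1,2}(\mathbb{T}^2)),
\end{equation*}
\begin{equation*}
K_{b_0}\subset W^{3,2}(\mathbb{T}^2),\qquad K_{q_0}\subset W^{2,2}(\mathbb{T}^2),
\end{equation*}
satisfying $\mathcal{L}[b_n](K_b^c),\mathcal{L}[q_n](K_q^c),\mathcal{L}[b_{0,n}](K_{b_0}^c),\mathcal{L}[q_{0,n}](K_{q_0}^c)<\varepsilon/5$ for every $n\in\mathbb{N}$. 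Each such compact is obtained from the uniform bounds \eqref{unifGarlen} and Corollary \ref{coe:holderCont}, combined with the compact embeddings used in the previous lemma (and, for the initial data, from the convergence $P_nb_0\to b_0$ a.s., together with Prokhorov).

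\emph{Step 2: the countable product of Wiener marginals.} Since $\{\mathcal{L}[W^i]\}$ is a singleton for each $i$, it is a tight family on $C([0,T])$: pick a compact $K_i\subset C([0,T])$ with $\mathbb{P}(W^i\notin K_i)<\varepsilon\,2^{-i}/5$. By Tychonoff's theorem the product $K_W:=\prod_{i\in\mathbb{N}}K_i$ is compact in $C([0,T])^{\mathbb{N}}$ with its product topology (which is Polish), and by a union bound
\begin{equation*}
\mathbb{P}\bigl(\{W^i\}_{i\in\mathbb{N}}\notin K_W\bigr)\le\sum_{i\in\mathbb{N}}\mathbb{P}(W^i\notin K_i)<\varepsilon/5.
\end{equation*}

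\emph{Step 3: assemble.} The product set $K:=K_b\times K_q\times K_{b_0}\times K_{q_0}\times K_W$ is compact in $\chi$ (finite product of compacts in Polish spaces, times a Tychonoff-compact countable product), and by a union bound over the five marginals,
\begin{equation*}
\mathcal{L}\bigl[b_n,q_n,b_{0,n},q_{0,n},\{W^i\}_{i\in\mathbb{N}}\bigr](K^c)<\varepsilon
\end{equation*}
uniformly in $n$. Hence the joint family is tight on $\chi$. The only mildly delicate point is handling the infinite factor $C([0,T])^{\mathbb{N}}$, which is resolved by the summable-$\varepsilon 2^{-i}$ trick and Tychonoff; the rest is bookkeeping.
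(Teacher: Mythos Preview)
Your proof is correct and follows exactly the approach the paper has in mind: the paper treats the corollary as an ``immediate consequence'' of the marginal tightness lemma together with the observation that each $\{\mathcal{L}[W^i]\}$ is a singleton and hence tight, without spelling out the Tychonoff/union-bound details that you have carefully supplied.
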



With tightness of the measures in hand, we can now proceed to identify the limits of the sequence of functions by way of the Jakubowski--Skorokhod representation theorem \cite{jakubowski1998short}. See also, \cite[Theorem 2.7.1]{breit2018stoch}.
\begin{proposition}
\label{prop:jaku}
There exists a complete probability space $(\tilde{\Omega},\tilde{\mathcal{F}},\tilde{\mathbb{P}})$ with $\chi$-valued
Borel measurable random variables $\{(\tilde{b}_n,\tilde{q}_n ,\tilde{b}_{0,n},\tilde{q}_{0,n} ,\{\tilde{W}^i_n\}_{i\in \mathbb{N}})\}_{n\in \mathbb{N}}$ and $(\tilde{b},\tilde{q},\tilde{b}_0,\tilde{q}_0,\{\tilde{W}^i\}_{i\in \mathbb{N}})$ such that (up to a subsequence, not relabelled):
\begin{itemize}
\item the law of $\{(\tilde{b}_n,\tilde{q}_n ,\tilde{b}_{0,n},\tilde{q}_{0,n} ,\{\tilde{W}^i_n\}_{i\in \mathbb{N}})\}_{n\in \mathbb{N}}$ is $ \mathcal{L}[{b_n,q_n,b_{0,n}, q_{0,n},\{W^i\}_{i\in \mathbb{N}}}], \,n \in \mathbb{N}$;
\item the law of $(\tilde{b},\tilde{q},\tilde{b}_0, \tilde{q}_0,\{\tilde{W}^i\}_{i\in \mathbb{N}})$ is a Radon measure;
\item the following convergences
\begin{align}
&\tilde{b}_n \rightarrow \tilde{b}\qquad\text{in}\qquad C([0,T];W^{s,2}(\mathbb{T}^2))
\\
 &\tilde{q}_n \rightarrow \tilde{q}\qquad\text{in}\qquad C([0,T];W^{s-1,2}(\mathbb{T}^2))
\\
&\tilde{b}_{0,n} \rightarrow \tilde{b}_0\qquad\text{in}\qquad W^{3,2}(\mathbb{T}^2),
\\
 &\tilde{q}_{0,n} \rightarrow \tilde{q}_0\qquad\text{in}\qquad W^{2,2}(\mathbb{T}^2)
\\
&\tilde{W}^i_n  \rightarrow  \tilde{W}^i \qquad\text{in}\qquad C([0,T])
\end{align}
hold $\tilde{\mathbb{P}}$-a.s. as $n\rightarrow\infty$.
\end{itemize}
\end{proposition}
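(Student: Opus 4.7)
The plan is to apply the Jakubowski--Skorokhod representation theorem \cite[Theorem 2.7.1]{breit2018stoch} directly to the tight family of joint laws $\mathcal{L}[(b_n,q_n,b_{0,n},q_{0,n},\{W^i\}_{i\in\mathbb{N}})]$, whose tightness on the product space $\chi$ is the content of Corollary \ref{cor:tightnessSmooth}. First I would verify that $\chi$ is Polish, so that the classical Skorokhod representation theorem suffices (and Jakubowski's extension is a safety net rather than a necessity). Each factor is Polish: $C([0,T];W^{s,2}(\mathbb{T}^2))$ and $C([0,T];W^{s-1,2}(\mathbb{T}^2))$ are spaces of continuous paths valued in separable Hilbert spaces; $W^{3,2}(\mathbb{T}^2)$ and $W^{2,2}(\mathbb{T}^2)$ are themselves separable Hilbert spaces; and the countable product $C([0,T])^{\mathbb{N}}$ is Polish under the metric $d(w,w')=\sum_{i\geq 1} 2^{-i}(\Vert w^i-w'^i\Vert_{C([0,T])}\wedge 1)$. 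A finite product of Polish spaces is Polish, so $\chi$ is Polish.

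Once tightness on a Polish target space is in hand, the theorem yields (after extracting a subsequence, which I would not relabel) a new complete probability space $(\tilde\Omega,\tilde{\mathcal{F}},\tilde{\mathbb{P}})$ supporting $\chi$-valued random variables $(\tilde{b}_n,\tilde{q}_n,\tilde{b}_{0,n},\tilde{q}_{0,n},\{\tilde W^i_n\}_i)$ with exactly the prescribed laws and a $\chi$-valued limit $(\tilde{b},\tilde{q},\tilde{b}_0,\tilde{q}_0,\{\tilde W^i\}_i)$ such that the approximants converge $\tilde{\mathbb{P}}$-a.s. to the limit in the topology of $\chi$. The law of the limit is automatically a Radon measure because every Borel probability measure on a Polish space is Radon by Ulam's theorem. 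The five convergence statements are then nothing other than the coordinatewise convergences implied by convergence in the product topology of $\chi$; in particular, convergence in the product topology of $C([0,T])^{\mathbb{N}}$ is equivalent to convergence of each individual path $\tilde W^i_n\to\tilde W^i$ in $C([0,T])$, which is exactly what the proposition records.

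I do not expect any substantive obstacle in this step: the genuine work — the uniform $W^{3,2}\times W^{2,2}$ estimates of Subsection \ref{subsub:uniEst}, the time-increment bound of Lemma \ref{lem:fractTime}, and the assembled tightness in Corollary \ref{cor:tightnessSmooth} — has already been carried out. The only bookkeeping point worth emphasizing is that by working from the start with the joint law on the \emph{product} space $\chi$, all components are transferred to the same new probability space simultaneously, and one does not lose the joint dependence structure (in particular, the relation between each approximant and its driving sequence of Brownian motions is preserved in law). Identification of $\tilde{b},\tilde{q}$ as solutions of the limiting equation, and verification that the $\tilde W^i$ are Brownian motions with respect to the filtration generated by the joint limit process, are deferred to subsequent steps of the passage to the limit and lie outside the statement of Proposition \ref{prop:jaku}.
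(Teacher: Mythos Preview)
Your proposal is correct and matches the paper's approach exactly: the paper does not write out a proof but simply invokes the Jakubowski--Skorokhod representation theorem \cite{jakubowski1998short}, \cite[Theorem 2.7.1]{breit2018stoch} applied to the tight family furnished by Corollary \ref{cor:tightnessSmooth}. Your additional remarks (that $\chi$ is in fact Polish, so the classical Skorokhod theorem already suffices, and that convergence in the product topology unpacks into the five coordinatewise convergences) are correct and helpful elaborations of what the paper leaves implicit.
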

\subsubsection{Existence of a strong martingale solution for the system with cut-off}
\label{sec:strongMartWithCut}
Due to the equivalence of laws, as stated in Proposition \ref{prop:jaku}, as well as \cite[Theorem 2.9.1]{breit2018stoch}, we can conclude that on the new probability space $(\tilde{\Omega},\tilde{\mathcal{F}},\tilde{\mathbb{P}})$, the collection $(\tilde{b}_n, \tilde{q}_n,\tilde{b}_{0,n},\tilde{q}_{0,n} , \{\tilde{W}_n^i\}_{i\in \mathbb{N}})$ satisfies the $n$th order Galerkin approximation \eqref{ceCutGar}--\eqref{constrtGar}  where $\{\tilde{W}_n^i\}_{i\in \mathbb{N}}$ is a family of independent Brownian motions. Furthermore, we can endow this new space with the canonical filtration 
\begin{align*}
\tilde{\mathcal{F}}^n_t=\sigma \Big(\sigma_t[\tilde{b}_n] \cup \sigma_t[\tilde{q}_n] \cup_{i\in \mathbb{N}} \sigma_t[\tilde{W}^i_n] \Big), \quad t\in [0,T].
\end{align*}
Similarly, we can define the $\sigma$-algebra $(\tilde{\mathcal{F}}_t)_{t\geq 0}$ on $(\tilde{b},\tilde{q},\{\tilde{W}^i\}_{i\in \mathbb{N}})$ by using \cite[Lemma 2.9.3]{breit2018stoch} and Proposition \ref{prop:jaku}.
Now, because $(\tilde{b}_n,\tilde{q}_n)$ solves \eqref{ceCutGar}--\eqref{constrtGar}, it satisfies \eqref{unifGarlen}, and thus by Fatou's lemma, its limit $(\tilde{b},\tilde{q})$ satisfies the bound
\begin{equation}
\begin{aligned}
\label{beddtildeLimi}
 \mathbb{E}\sup_{t\in[0,T]}&\big(\Vert  \tilde{b}(t)\Vert_{3,2}^4+\Vert  \tilde{q}(t )\Vert_{2,2}^4\big) 
\lesssim1.
\end{aligned}
\end{equation}
We can however improve this to strong continuity in time for $(\tilde{b},\tilde{q})$ by mimicking the argument in \cite[Page 1665]{kim2011on}. Indeed, we observe that $\tilde{b}_n$ satisfies \eqref{b32estUniqY} since again, $(\tilde{b}_n,\tilde{q}_n)$ solves \eqref{ceCutGar}--\eqref{constrtGar}. By summing the equation over the multiindex $\vert \beta \vert\leq 3$, we can conclude that
\begin{equation}
\begin{aligned}
\label{itoequaTilde}
 \Vert  \tilde{b}_n(t)\Vert_{3,2}^2 
&
= 
\Vert  \tilde{b}_n(0)\Vert_{3,2}^2
- 
2\int_0^t \big\langle\theta_R^n  (\tilde{\bu}_n \cdot \nabla  ) \tilde{b}_n  
\,,\,   \tilde{b}_n \big\rangle_{3,2}\ds 
-  
2\int_0^t\big\langle(\bm{\xi}_i\cdot\nabla  ) \tilde{b}_n \,,\,
 \tilde{b}_n \big\rangle_{3,2}\dd \tilde{W}^i_{n,\sigma}
\\&
-
\int_0^t  \big\Vert P_n  (\bm{\xi}_i\cdot\nabla ) \tilde{b}_n \big\Vert_{3,2}^2 \ds
-
\int_0^t   \big\langle (\bm{\xi}_i\cdot\nabla)P_n (\bm{\xi}_i\cdot\nabla ) \tilde{b}_n\,,\,  \tilde{b}_n \big\rangle_{3,2}\ds
\end{aligned}
\end{equation}
holds $\mathbb{P}$-a.s for all $t\in[0,T]$. Since we have the a.s. converges result from Proposition \ref{prop:jaku} and the limit satisfies \eqref{beddtildeLimi}, we can pass to the limit as $n\rightarrow\infty$ in \eqref{itoequaTilde} to obtain
\begin{equation}
\begin{aligned}
\label{itoequaTildeLimit}
 \Vert  \tilde{b}(t)\Vert_{3,2}^2 
&
= 
\Vert  \tilde{b}(0)\Vert_{3,2}^2
- 
2\int_0^t \big\langle\theta_R  (\tilde{\bu} \cdot \nabla  ) \tilde{b} 
\,,\,   \tilde{b}\big\rangle_{3,2}\ds 
-  
2\int_0^t\big\langle(\bm{\xi}_i\cdot\nabla  ) \tilde{b} \,,\,
 \tilde{b} \big\rangle_{3,2}\dd \tilde{W}^i_{\sigma}
\\&
-
\int_0^t  \big\Vert (\bm{\xi}_i\cdot\nabla ) \tilde{b} \big\Vert_{3,2}^2 \ds
-
\int_0^t   \big\langle (\bm{\xi}_i\cdot\nabla) (\bm{\xi}_i\cdot\nabla ) \tilde{b}\,,\,  \tilde{b} \big\rangle_{3,2}\ds
\end{aligned}
\end{equation}
$\mathbb{P}$-a.s for all $t\in[0,T]$. Note  in particular that the convergence
$\theta_R^n \rightarrow \theta_R$ follows from the convergence result in Proposition \ref{prop:jaku} and the embeddings $W^{s,2}(\mathbb{T}^2)\hookrightarrow W^{1,\infty}(\mathbb{T}^2)$ and $W^{s-1,2}(\mathbb{T}^2)\hookrightarrow L^{\infty}(\mathbb{T}^2)$ which hold for $s>2$.
Moving on, we note that it follows from \eqref{itoequaTildeLimit} that $\Vert \tilde{b}(t) \Vert_{3,2}$ is continuous in $t$ since the right-hand side is continuous in time. Combining this with \eqref{beddtildeLimi} yields
$
\tilde{b} \in L^2 \big(\tilde{\Omega};C([0,T]; W^{3,2}(\mathbb{T}^2)) \big).
$
The same argument applies to $\tilde{q}$ and we obtain
$
\tilde{q} \in L^2 \big(\tilde{\Omega};C([0,T]; W^{2,2}(\mathbb{T}^2)) \big).
$
Furthermore, $(\tilde{b},\tilde{q})$ are $(\tilde{\mathcal{F}}_t)$-progressively measurable processes since they both have the continuity and measurability properties \cite[Proposition 2.1.18]{breit2018stoch}. Finally, using once more Proposition \ref{prop:jaku} and the properties of the projections, we are able to pass to the limit in \eqref{ceCutGar}--\eqref{constrtGar} (which is satisfied by $(\tilde{b}_n, \tilde{q}_n, \{\tilde{W}_n^i\}_{i\in \mathbb{N}})$)  to obtain our next result. 
In particular,   the passage to the limit in the stochastic integral term makes use of \cite[Lemma 2.6.6]{breit2018stoch} (see also, \cite[Lemma 2.1]{debussche2011local}).   



\begin{proposition}[Strong martingale solution]
\label{prop:smoothMartin}
Fix  $R>0$. Let $((\bm{\xi}_i)_{i\in \mathbb{N}}, \bu_h, f, b_0, q_0)$ satisfy \eqref{dataGarlerkin}. Then there exists a stochastic basis  $(\tilde{\Omega}, \tilde{\mathcal{F}},(\tilde{\mathcal{F}}_t)_{t\geq0}, \tilde{\mathbb{P}})$, a family of $(\tilde{\mathcal{F}}_t)$-Brownian motions $\{\tilde{W}^i\}_{i\in \mathbb{N}}$ and  $(\tilde{\mathcal{F}}_t)$-progressively measurable processes
\begin{align*}
&\tilde{b} \in L^2 \big(\tilde{\Omega};C([0,T]; W^{3,2}(\mathbb{T}^2)) \big),
\qquad
&\tilde{q} \in  L^2 \big(\tilde{\Omega};C([0,T]; W^{2,2}(\mathbb{T}^2)) \big)
\end{align*}
with law $\mathcal{L}[\tilde{b}_0,\tilde{q}_0] = \mathcal{L}[b_0, q_0]$ such that $(\tilde{b}, \tilde{q})$ satisfies
\begin{align}
\tilde{b}(t)=\tilde{b}_0 - \int_0^t \theta_R[(\tilde{\bu} \cdot \nabla) \tilde{b} ]\ds -\int_0^t (\bm{\xi}_i\cdot\nabla) \tilde{b} \circ \dd \tilde{W}^i_\sigma,
\\
\tilde{q}(t) = \tilde{q}_0 - \int_0^t\theta_R[(\tilde{\bu}\cdot \nabla)( \tilde{q} - \tilde{b})]\ds - \int_0^t(\bu_h \cdot \nabla) \tilde{b} \ds  - \int_0^t(\bm{\xi}_i\cdot\nabla) (\tilde{q}- \tilde{b}) \circ \dd \tilde{W}^i_\sigma
\end{align}
$\tilde{\mathbb{P}}$-a.s. for all $t\in[0,T]$.
\end{proposition}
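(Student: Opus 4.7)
The proposition is essentially the synthesis of the constructions already carried out in this subsection, so the plan is to stitch them together and justify the passage to the limit $n\to\infty$ in the Galerkin equations on the new probability space.

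First, I would use the equivalence of laws established in Proposition~\ref{prop:jaku} to transfer the Galerkin system \eqref{ceCutGar}--\eqref{meCutGar} from $(\Omega,\mathcal{F},\mathbb{P})$ to the new space $(\tilde{\Omega},\tilde{\mathcal{F}},\tilde{\mathbb{P}})$. Since the Galerkin system is a functional identity on the canonical path space and the joint laws coincide, \cite[Theorem~2.9.1]{breit2018stoch} guarantees that $(\tilde{b}_n,\tilde{q}_n,\{\tilde{W}_n^i\})$ satisfies the same system driven by the new noise. The filtration $(\tilde{\mathcal{F}}_t^n)$ is the canonical one listed in the text, and $\{\tilde{W}_n^i\}$ are verified to be $(\tilde{\mathcal{F}}_t^n)$-Brownian motions via the equivalence of joint laws. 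The limiting filtration $(\tilde{\mathcal{F}}_t)$, together with the Brownian motion property of the limit $\{\tilde{W}^i\}$, is constructed by the standard argument of \cite[Lemma~2.9.3]{breit2018stoch}.

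Second, I would pass to the limit $n\to\infty$ term-by-term in the It\^o form of the Galerkin equations. The key observations are: the a.s.\ convergences in $C([0,T];W^{s,2}(\mathbb{T}^2))\times C([0,T];W^{s-1,2}(\mathbb{T}^2))$ from Proposition~\ref{prop:jaku}, the continuous embeddings $W^{s,2}\hookrightarrow W^{1,\infty}$ and $W^{s-1,2}\hookrightarrow L^{\infty}$ valid for $s>2$, and the uniform bound \eqref{unifGarlen}. Together these give $\theta_R^n\to\theta_R$ a.s., convergence of the truncated nonlinearities $\theta_R^n P_n[(\tilde{\bu}_n\cdot\nabla)(\tilde{q}_n-\tilde{b}_n)]$ and of the bathymetry drift $P_n[(\bu_h\cdot\nabla)\tilde{b}_n]$ in $L^2$ a.s., and convergence of the double-Lie-derivative correction term using Lemma~\ref{rem:xiAppendix} and the continuity of $P_n$. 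The $L^4$ bound in \eqref{unifGarlen} provides uniform integrability, so Vitali's theorem upgrades a.s.\ convergence to convergence in $L^1(\tilde{\Omega}\times[0,T])$, enough to pass to the limit in the Bochner integrals against test functions.

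Third, for the stochastic integrals the tool is \cite[Lemma~2.6.6]{breit2018stoch} (equivalently \cite[Lemma~2.1]{debussche2011local}), which allows passage to the limit once the integrands converge in probability in $L^2(0,T;L^2(\mathbb{T}^2))$ and the Brownian motions converge in $C([0,T])$; both hypotheses are supplied by Proposition~\ref{prop:jaku}. After closing the limit in the It\^o version, I would reinsert the Stratonovich correction by undoing the conversion $\mathcal{G}_i\mathbf{g}\circ d\tilde{W}^i=\mathcal{G}_i\mathbf{g}\,d\tilde{W}^i+\tfrac{1}{2}\mathcal{G}_i^2\mathbf{g}\,dt$, which recovers the equations as stated in the proposition. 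The regularity $\tilde{b}\in L^2(\tilde{\Omega};C([0,T];W^{3,2}))$, $\tilde{q}\in L^2(\tilde{\Omega};C([0,T];W^{2,2}))$ together with progressive measurability has already been established in the paragraphs preceding the proposition, and the matching of the initial laws $\mathcal{L}[\tilde{b}_0,\tilde{q}_0]=\mathcal{L}[b_0,q_0]$ is immediate from Proposition~\ref{prop:jaku}.

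The main technical obstacle is the identification of the limit of the quadratic stochastic correction $\tfrac{1}{2}P_n[(\bm{\xi}_i\cdot\nabla)P_n[(\bm{\xi}_i\cdot\nabla)\cdot]]$, since this mixes two projections with a variable-coefficient differential operator; here Lemma~\ref{rem:xiAppendix} controls the relevant norms and the strong convergence $P_n\to\mathrm{Id}$ on the limit's regularity class closes the argument. Everything else is a routine assembly of the already-proved ingredients.
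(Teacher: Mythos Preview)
Your proposal is correct and follows essentially the same route as the paper: transfer the Galerkin system to the new space via \cite[Theorem~2.9.1]{breit2018stoch}, build the canonical filtrations, pass to the limit in the drift terms using the a.s.\ convergences of Proposition~\ref{prop:jaku} together with the uniform bound \eqref{unifGarlen}, and handle the stochastic integrals via \cite[Lemma~2.6.6]{breit2018stoch}. The paper's presentation differs only cosmetically (it invokes Fatou's lemma where you invoke Vitali, and it leaves the Stratonovich reconversion implicit), but the logical skeleton is the same.
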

This result establishes the existence of a strong martingale solution for the system with cut-off.  We can now proceed to show pathwise uniqueness for this cut-off system so that we are able to apply a Yamada--Watanabe type argument to establish the existence of an improved strong pathwise solution for the system with cut-off. 
\subsubsection{Pathwise uniqueness for the system with cut-off}
\label{sec:pathUniqueCutOff1}
In the following, we let $(b_1,q_2)$ and $(b_2,q_2)$ be two solutions of \eqref{ceCut}--\eqref{constrtCut} with the same data class \eqref{dataGarlerkin}.
We now set 
  $b_{12}:= b_1
  -b_2$, $q _{12}:= q _1 -q _2$, $\bu_{12}:= \bu_1-\bu_2$  and $\theta_R^{12}:=\theta_R^1 -\theta_R^2$ so that  $(b_{12},  q _{12}, \theta_R^{12})$ satisfies
\begin{equation}
\begin{aligned}
\label{b12CuttOff}
\dd b_{12} &+ \Big(\theta_R^{12} [(\bu_1 \cdot \nabla) b_1 ]
+\theta_R^2 [(\bu_{12} \cdot \nabla) b_1 ]
+\theta_R^2 [(\bu_2 \cdot \nabla) b_{12} ]  \Big)\dt
\\
&+\frac{1}{2}(\bm{\xi}_i\cdot\nabla)[ (\bm{\xi}_i\cdot\nabla) b_{12}] \dt 
+
(\bm{\xi}_i\cdot\nabla) b_{12}\dd W^i =0,
\end{aligned}
\end{equation}
and $(b_{12}, \bu_{12}, q _{12}, P_{12})$ satisfies
\begin{equation}
\begin{aligned}
\label{q12CuttOff}
\dd q_{12} &+ \Big(\theta_R^{12} [(\bu_1 \cdot \nabla)(q_1 - b_1) ]
+\theta_R^2 [(\bu_{12} \cdot \nabla)(q_1 - b_1) ]
+\theta_R^2 [(\bu_2 \cdot \nabla)(q_{12} - b_{12}) ]+
(\bu_h \cdot \nabla) b_{12}  \Big)\dt
\\
&
+\frac{1}{2}
(\bm{\xi}_i\cdot\nabla)[ (\bm{\xi}_i\cdot\nabla)(q_{12} - 2b_{12})] \dt 
+ 
(\bm{\xi}_i\cdot\nabla)(q_{12} - b_{12})\dd W^i =0.
\end{aligned}
\end{equation}
If we apply $\partial^\beta $ to the equation \eqref{b12CuttOff} for $b_{12}$ above where now $\vert \beta \vert\leq 2$, we obtain
\begin{equation}
\begin{aligned}
\dd \partial^\beta b_{12} &+ \Big(\theta_R^{12} [(\bu_1 \cdot \nabla \partial^\beta) b_1 ]
+\theta_R^2 [(\bu_{12} \cdot \nabla \partial^\beta) b_1 ]
+\theta_R^2 [(\bu_2 \cdot \nabla \partial^\beta) b_{12} ]  \Big)\dt
\\
&+\frac{1}{2} \partial^\beta  \big[(\bm{\xi}_i\cdot\nabla)[ (\bm{\xi}_i\cdot\nabla) b_{12}]\big] \dt 
+  \partial^\beta [(\bm{\xi}_i\cdot\nabla) b_{12} ] \dd W^i =(S_1 +  S_2+ S_3)\dt,
\label{ceDiff5}
\end{aligned}
\end{equation}
where  
\begin{align*}
S_1&:= \theta_R^{12}\big([(\bu_1 \cdot \partial^\beta  \nabla) b_1]
-
\partial^\beta [(\bu_1 \cdot \nabla)  b_1 ]\big)
,
\\
S_2&:= \theta_R^2\big([(\bu_{12} \cdot \partial^\beta  \nabla )b_1]
-
\partial^\beta[(\bu_{12} \cdot \nabla)  b_1 ]\big)
,
\\
S_3&:=\theta_R^2\big([(\bu_2 \cdot \partial^\beta  \nabla) b_{12}]
-
\partial^\beta[(\bu_2 \cdot \nabla ) b_{12} ]\big).
\end{align*}
For $s\in(2,3)$, we can now use the commutator estimate and the fact that 
\begin{equation}
\begin{aligned}
\label{estCutOff}
|\theta_R^{12}| &\lesssim_R \Vert b_{12}\Vert_{s,2}+\Vert \bu_{12}\Vert_{s,2}+ \Vert q_{12}\Vert_{s-1,2}
\\
&\lesssim_R \Vert b_{12}\Vert_{2,2}^{3-s}\Vert b_{12}\Vert_{3,2}^{s-2}+ \Vert q_{12}\Vert_{1,2}^{3-s}\Vert q_{12}\Vert_{2,2}^{s-2}
\\
&\lesssim_R \Vert b_{12}\Vert_{2,2}(1+\Vert b_1\Vert_{3,2}+\Vert b_2\Vert_{3,2})+ \Vert q_{12}\Vert_{1,2}(1+\Vert q_1\Vert_{2,2}+\Vert q_2\Vert_{2,2})
\\
&\lesssim_R( \Vert b_{12}\Vert_{2,2}+ \Vert q_{12}\Vert_{1,2})(1+\Vert b_1\Vert_{3,2}+\Vert b_2\Vert_{3,2}+\Vert q_1\Vert_{2,2}+\Vert q_2\Vert_{2,2})
\end{aligned}
\end{equation}
to obtain the estimates
\begin{align}
\Vert S_1 \Vert_2 &
\lesssim_R 
( \Vert b_{12}\Vert_{2,2}+ \Vert q_{12}\Vert_{1,2})(1+\Vert b_1\Vert_{3,2}^3+\Vert b_2\Vert_{3,2}^3+\Vert q_1\Vert_{2,2}^3+\Vert q_2\Vert_{2,2}^3),
\label{s1Estdiff}
\\
\Vert S_2 \Vert_2 
&
\lesssim_R  
\Vert  q_{12} \Vert_{1,2}\Vert b_1 \Vert_{3,2},
\label{s2Estdiff}
\\
\Vert S_3 \Vert_2 
&
\lesssim_R 
(1+ \Vert  q_2 \Vert_{2,2})\Vert b_{12} \Vert_{2,2}.
\label{s3Estdiff}
\end{align}
If we apply It\^o's formula to the mapping $t\mapsto \dd \Vert\partial^\beta b_{12}(t)\Vert_2^2$ with $\vert \beta \vert\leq 2$, we obtain
\begin{equation}
\begin{aligned}
\label{b32estCutOff}
 \dd\Vert \partial^\beta b_{12}(t) \Vert_2^2  &= - 2
  \int_{\mathbb{T}^2} \Big(\theta_R^{12} [(\bu_1 \cdot \nabla \partial^\beta) b_1 ]
+\theta_R^2 [(\bu_{12} \cdot \nabla \partial^\beta) b_1 ]
+\theta_R^2 [(\bu_2 \cdot \nabla \partial^\beta) b_{12} ]  \Big)\partial^\beta b_{12} \dx \dt
\\
&- \int_{\mathbb{T}^2} \partial^\beta  \big[(\bm{\xi}_i\cdot\nabla)[ (\bm{\xi}_i\cdot\nabla) b_{12}]\big] \partial^\beta b_{12} \dx\dt 
- \Vert \partial^\beta [(\bm{\xi}_i\cdot\nabla) b_{12}] \Vert_2^2\dt
\\&
- 2 \int_{\mathbb{T}^2} \partial^\beta [(\bm{\xi}_i\cdot\nabla) b_{12} ] \partial^\beta b_{12} \dx \dd W^i_t 
+
2
 \int_{\mathbb{T}^2}
(S_1 +  S_2+ S_3)\partial^\beta b_{12} \dx\dt.
\end{aligned}
\end{equation} 
We can now use \eqref{estCutOff} to obtain
\begin{equation}
\begin{aligned}
\bigg\vert  2
 \int_{\mathbb{T}^2} \theta_R^{12} [(\bu_1 \cdot \nabla \partial^\beta) b_1 ]
\partial^\beta b_{12} \dx \bigg\vert
\leq G_1(t)
\big(\Vert b_{12} \Vert_{2,2}^2+ \Vert q_{12} \Vert_{1,2}^2 \big)
\end{aligned}
\end{equation}
where
\begin{align}
G_1(t):= c(R)\bigg( 1
+\sum_{i=1}^2 \Vert b_i(t) \Vert_{3,2}^3 +\sum_{i=1}^2\Vert q_i(t) \Vert_{2,2}^3 \bigg).
\end{align}
Also,
\begin{equation}
\begin{aligned}
\bigg\vert  2
  \int_{\mathbb{T}^2} \theta_R^2 [(\bu_{12} \cdot \nabla \partial^\beta) b_1 ]
\partial^\beta b_{12} \dx \bigg\vert
\leq G_2(t)
\big( \Vert b_{12} \Vert_{2,2}^2+ \Vert q_{12} \Vert_{1,2}^2 \big)
\end{aligned}
\end{equation}
where
\begin{align}
G_2(t):= c(R) 
\Vert b_1(t) \Vert_{3,2}.
\end{align}
On the other hand, because $\bu_2$ is divergence free, by integration by parts,
\begin{equation}
\begin{aligned}
\bigg\vert  2
 \int_{\mathbb{T}^2} \theta_R^2 [(\bu_2 \cdot \nabla \partial^\beta) b_{12} ]
\partial^\beta b_{12} \dx \bigg\vert
=0.
\end{aligned}
\end{equation}
Next, from \eqref{s1Estdiff}--\eqref{s3Estdiff}, we have that
\begin{equation}
\begin{aligned}
2\bigg\vert  \int_{\mathbb{T}^2}
(S_1 +  S_2+ S_3)\partial^\beta b_{12} \dx \bigg\vert
&\leq G_3(t)
\big(\Vert b_{12} \Vert_{2,2}^2+ \Vert q_{12} \Vert_{1,2}^2 \big)
\end{aligned}
\end{equation}
where
\begin{align}
G_3(t):= c(R)\bigg( 1
+\sum_{i=1}^2 \Vert b_i(t) \Vert_{3,2} +\sum_{i=1}^2\Vert q_i(t) \Vert_{2,2} \bigg).
\end{align}
Finally, using Lemma \ref{rem:xiAppendix} we obtain the following estimate
\begin{equation}
\begin{aligned}
\bigg\vert \int_{\mathbb{T}^2}&\partial^\beta
\big[(\bm{\xi}_i\cdot\nabla)[ (\bm{\xi}_i\cdot\nabla)b_{12}]\big] \partial^\beta b_{12} \dx
+
 \big\Vert  \partial^\beta  [ (\bm{\xi}_i\cdot\nabla)b_{12}]\big\Vert_2^2 
 \bigg\vert
\lesssim
\Vert  b_{12} \Vert_{2,2}^2.
\end{aligned}
\end{equation}
In summary, by summing all terms in \eqref{b32estCutOff} over $\vert \beta \vert\leq 2$ we have shown that
\begin{equation}
\begin{aligned}
\label{b32estUniqSumm}
 \dd\Vert  b_{12}(t) \Vert_{2,2}^2  
 &\leq
 G_4(t)
 \big(\Vert b_{12} \Vert_{2,2}^2+ \Vert q_{12} \Vert_{1,2}^2 \big)\dt
- 2\sum_{\vert \beta \vert\leq 2} \int_{\mathbb{T}^2} \partial^\beta [(\bm{\xi}_i\cdot\nabla) b_{12} ] \partial^\beta b_{12} \dx \dd W^i_t 
\end{aligned}
\end{equation}
where
\begin{align}
\label{G4}
G_4(t)\sim G_1(t).
\end{align}
\\
Next, we return to \eqref{q12CuttOff} and apply $\partial^\beta $ with $\vert \beta \vert\leq 1$ to it to obtain
\begin{equation}
\begin{aligned}
\dd \partial^\beta q_{12} &+ \Big(\theta_R^{12} [(\bu_1 \cdot \nabla \partial^\beta)(q_1 - b_1) ]
+\theta_R^2 [(\bu_{12} \cdot \nabla \partial^\beta)(q_1 - b_1) ]
+\theta_R^2 [(\bu_2 \cdot \nabla \partial^\beta)(q_{12} - b_{12}) ]
\\
&+
(\bu_h \cdot \nabla \partial^\beta) b_{12}  \Big)\dt
+\frac{1}{2}
\partial^\beta\big[
(\bm{\xi}_i\cdot\nabla)[ (\bm{\xi}_i\cdot\nabla)(q_{12} - 2b_{12})]\big] \dt 
\\&+ 
\partial^\beta\big[(\bm{\xi}_i\cdot\nabla)(q_{12} - b_{12})\big]\dd W^i 
=(S_4+ \ldots+S_{10})\dt
\end{aligned}
\end{equation}
where 
\begin{align*}
S_4&:= \theta_R^{12}\big([(\bu_1 \cdot \partial^\beta  \nabla) q_1]
-
\partial^\beta [(\bu_1 \cdot \nabla)  q_1 ]\big)
,
&S_5:= -\theta_R^{12}\big([(\bu_1 \cdot \partial^\beta  \nabla) b_1]
-
\partial^\beta [(\bu_1 \cdot \nabla)  b_1 ]\big)
,
\\
S_6&:= \theta_R^2\big([(\bu_{12} \cdot \partial^\beta  \nabla )q_1]
-
\partial^\beta[(\bu_{12} \cdot \nabla)  q_1 ]\big)
,
&S_7:= - \theta_R^2\big([(\bu_{12} \cdot \partial^\beta  \nabla )b_1]
-
\partial^\beta[(\bu_{12} \cdot \nabla)  b_1 ]\big)
,
\\
S_8&:=\theta_R^2\big([(\bu_2 \cdot \partial^\beta  \nabla) q_{12}]
-
\partial^\beta[(\bu_2 \cdot \nabla ) q_{12} ]\big)
,
&S_9:=-\theta_R^2\big([(\bu_2 \cdot \partial^\beta  \nabla) b_{12}]
-
\partial^\beta[(\bu_2 \cdot \nabla ) b_{12} ]\big)
,
\\
S_{10}&:=[(\bu_h \cdot \partial^\beta  \nabla) b_{12}]
-
\partial^\beta[(\bu_h \cdot \nabla ) b_{12} ].
\end{align*}
Similar to the estimates for $S_1$, $S_2$ and $S_3$ above, we can show that
\begin{align}
\label{sis}
\Vert S_i \Vert_2 \leq G_5(t) \big( \Vert b_{12} \Vert_{2,2} + \Vert q_{12} \Vert_{1,2}\big),
\qquad i=4, \ldots, 10
\end{align}
where
\begin{align}
\label{G5}
G_5(t):= c(R)\bigg( 1+ \Vert \bu_h \Vert_{3,2}+\Vert f \Vert_{2,2}
+\sum_{i=1}^2 \Vert b_i(t) \Vert_{3,2}^3 +\sum_{i=1}^2\Vert q_i(t) \Vert_{2,2}^3 \bigg).
\end{align}
Therefore,
with $\vert \beta \vert\leq 1$, we have that
\begin{equation}
\begin{aligned}
2\bigg\vert  \int_{\mathbb{T}^2}
(S_4 + \ldots+ S_{10})\partial^\beta q_{12} \dx \bigg\vert
&\leq G_5(t)
\big(\Vert b_{12} \Vert_{2,2}^2+ \Vert q_{12} \Vert_{1,2}^2 \big).
\end{aligned}
\end{equation}
If we apply It\^o's formula to the mapping $t\mapsto \dd \Vert\partial^\beta q_{12}(t)\Vert_2^2$ with $\vert \beta \vert\leq 1$, we obtain
\begin{equation}
\begin{aligned}
\label{q32estCutOff}
 \dd\Vert \partial^\beta q_{12}(t) \Vert_2^2  &= - 2
  \int_{\mathbb{T}^2} \Big(\theta_R^{12} [(\bu_1 \cdot \nabla \partial^\beta) (q_1-b_1) ]
+\theta_R^2 [(\bu_{12} \cdot \nabla \partial^\beta) (q_1-b_1) ]
\\&
+\theta_R^2 [(\bu_2 \cdot \nabla \partial^\beta) (q_{12} - b_{12}) ] 
+(\bu_h\cdot \nabla \partial^\beta)b_{12} \Big)\partial^\beta q_{12} \dx \dt
\\
&- \int_{\mathbb{T}^2} \partial^\beta  \big[(\bm{\xi}_i\cdot\nabla)[ (\bm{\xi}_i\cdot\nabla) (q_{12} -2 b_{12})]\big] \partial^\beta q_{12} \dx\dt 
- \Vert \partial^\beta [(\bm{\xi}_i\cdot\nabla) (q_{12} - b_{12})] \Vert_2^2\dt
\\&
- 2 \int_{\mathbb{T}^2} \partial^\beta [(\bm{\xi}_i\cdot\nabla)(q_{12} - b_{12}) ] \partial^\beta q_{12} \dx \dd W^i_t 
+
2
 \int_{\mathbb{T}^2}
(S_4 +\ldots+ S_{10})\partial^\beta q_{12} \dx\dt.
\end{aligned}
\end{equation} 
If we now combine \eqref{sis} with an argument similar to how \eqref{b32estUniqSumm} was derived, we deduce by summing  all terms in \eqref{q32estCutOff} over $\vert \beta \vert\leq 1$  that
\begin{equation}
\begin{aligned}
\label{q32estUniqSumm}
 \dd\Vert  q_{12}(t) \Vert_{1,2}^2  
 &\leq G_5(t)
 \big(\Vert b_{12} \Vert_{2,2}^2+ \Vert q_{12} \Vert_{1,2}^2 \big)\dt
- 2\sum_{\vert \beta \vert\leq 1} \int_{\mathbb{T}^2} \partial^\beta [(\bm{\xi}_i\cdot\nabla) (q_{12}- b_{12}) ] \partial^\beta q_{12} \dx \dd W^i_t.
\end{aligned}
\end{equation}
Further summing up \eqref{b32estUniqSumm} and \eqref{q32estUniqSumm} yields
\begin{equation}
\begin{aligned}
\label{bandq32estUniqSumm}
 \dd\big( \Vert  b_{12}(t) \Vert_{2,2}^2
 +\Vert  q_{12}(t) \Vert_{1,2}^2 \big)  
 &\leq G(t)
 \big(\Vert b_{12} \Vert_{2,2}^2+ \Vert q_{12} \Vert_{1,2}^2 \big) \dt
 \\&- 2\sum_{\vert \beta \vert\leq 2} \int_{\mathbb{T}^2} \partial^\beta [(\bm{\xi}_i\cdot\nabla) b_{12} ] \partial^\beta b_{12} \dx \dd W^i_t
\\&- 2\sum_{\vert \beta \vert\leq 1} \int_{\mathbb{T}^2} \partial^\beta [(\bm{\xi}_i\cdot\nabla) (q_{12}- b_{12}) ] \partial^\beta q_{12} \dx \dd W^i_t 
\end{aligned}
\end{equation}
where $G(t) \sim G_5(t)$. Next, we make use of
the It\^o's product rule
\begin{equation}
\begin{aligned}
\label{bandq32estUniqSummProd}
 \dd\Big[e^{-\int_0^t G(\sigma)\dd \sigma}&\big( \Vert  b_{12}(t) \Vert_{2,2}^2
 +\Vert  q_{12}(t) \Vert_{1,2}^2 \big)  
 \Big]
\\&=
-G(t)e^{-\int_0^t G(\sigma)\dd \sigma}\big(  \Vert  b_{12}(t) \Vert_{2,2}^2
 +\Vert  q_{12}(t) \Vert_{1,2}^2 \big)\dt  
 \\&
+e^{-\int_0^t G(\sigma)\dd \sigma} \dd\big(  \Vert  b_{12}(t) \Vert_{2,2}^2
 +\Vert  q_{12}(t) \Vert_{1,2}^2 \big).
 \end{aligned}
\end{equation}
With \eqref{bandq32estUniqSumm} and \eqref{bandq32estUniqSummProd}  in hand, we obtain
\begin{equation}
\begin{aligned}
\label{bandq32estUniqSummProdx}
e^{-\int_0^t G(\sigma)\dd \sigma}&\big( \Vert  b_{12}(t) \Vert_{2,2}^2
 +\Vert  q_{12}(t) \Vert_{1,2}^2 \big)
 \leq
 \big(\Vert b_{12}(0) \Vert_{2,2}^2+ \Vert q_{12}(0) \Vert_{1,2}^2 \big) 
 \\
 &
 - 2 \int_0^te^{-\int_0^s G(\sigma)\dd \sigma} \sum_{\vert \beta \vert\leq 2}\int_{\mathbb{T}^2} \partial^\beta [(\bm{\xi}_i\cdot\nabla) b_{12} ] \partial^\beta b_{12} \dx \dd W^i_s
\\&- 2\int_0^te^{-\int_0^s G(\sigma)\dd \sigma}  \sum_{\vert \beta \vert\leq 1}\int_{\mathbb{T}^2} \partial^\beta [(\bm{\xi}_i\cdot\nabla) (q_{12}- b_{12}) ] \partial^\beta q_{12} \dx \dd W^i_s .
\end{aligned}
\end{equation}
Since the paths of $b_i$ and $q_i$, $i=1,2$ are a.s. continuous in $W^{3,2}(\mathbb{T}^2)$ and $W^{2,2}(\mathbb{T}^2)$ respectively, it follows that $e^{-\int_0^s G(\sigma)\dd \sigma}$ is a.s strictly positive. Recall that $G\sim G_5$ where the latter is given by \eqref{G5}. Furthermore, the expectation of the It\^o stochastic integrals in \eqref{bandq32estUniqSummProdx} are zero leading to
\begin{equation}
\begin{aligned}
\label{bandq32estUniqSummProdxY}
\mathbb{E}\Big[e^{-\int_0^t G(\sigma)\dd \sigma}&\big( \Vert  b_{12}(t) \Vert_{2,2}^2
 +\Vert  q_{12}(t) \Vert_{1,2}^2 \big) \Big]
 \leq
 \mathbb{E}
 \big(\Vert b_{12}(0) \Vert_{2,2}^2+ \Vert q_{12}(0) \Vert_{1,2}^2 \big) 
.
\end{aligned}
\end{equation}
Combining the strict positivity of $e^{-\int_0^s G(\sigma)\dd \sigma}$ with the fact that $b_1(0)=b_2(0)$  and $q_1(0)=q_2(0)$, we  conclude that
\begin{align}
\mathbb{E}\big( \Vert  b_{12}(t) \Vert_{2,2}^2
 +\Vert  q_{12}(t) \Vert_{1,2}^2 \big)=0.
\end{align}

\subsubsection{Existence of a strong pathwise solution for the system with cut-off}
\label{sec:pathUniqueCutOff2}
Having shown the existence of a strong martingale solution and pathwise uniqueness, we are now in the position to establish the existence of a strong pathwise solution  of \eqref{ceCut}--\eqref{constrtCut}. This follows from the application of the Gy\"ongy--Krylov
characterization of convergence in probability which requires the analysis of two families of approximate solutions. In this regard, we return to the Galerkin system \eqref{ceCutGar}--\eqref{constrtGar} and consider two of its solutions $(b_m, q_m)$ and $(b_n,q_n)$ with respective initial conditions \eqref{initialTQGCutGar}. If we let $\mathcal{L}[b_m,q_m,b_{0,m},q_{0,m},b_n,q_n,b_{0,n},q_{0,n},\{W^i\}_{i\in \mathbb{N}}]$ be the  law  on the extended space
\begin{align*}
\chi^2:=\big[C([0,T];W^{s,2}(\mathbb{T}^2)) \times C([0,T];W^{s-1,2}(\mathbb{T}^2)) \times W^{3,2}(\mathbb{T}^2) \times W^{2,2}(\mathbb{T}^2)\big]^2 \times C([0,T])^\mathbb{N}
\end{align*}
where for a space $S$, $S^p$ denotes the product space $S\times\ldots\times S$ ($p$-times), then we can show that
\begin{center}
The family of measures $\{ \mathcal{L}[{b_m,q_m,b_{0,m},q_{0,m},b_n,q_n,b_{0,n},q_{0,n},\{W^i\}_{i\in \mathbb{N}}}] :n \in \mathbb{N}\}$  is tight on  $\chi^2$.
\end{center}
%
%
The tightness argument is a direct adaptation of Corollary \ref{cor:tightnessSmooth}.
Let us now fix an arbitrary subsequence $\{ \mathcal{L}[{b_{m_k},q_{m_k},b_{0,m_k},q_{0,m_k},b_{k_n},q_{k_n}, b_{0,n_k},q_{0,n_k},\{W^i_k\}_{i\in \mathbb{N}}}] :k \in \mathbb{N}\}$ of $\{ \mathcal{L}[{b_m,q_m,b_{0,m},q_{0,m},b_n,q_n,b_{0,n},q_{0,n},\{W^i\}_{i\in \mathbb{N}}}] :n \in \mathbb{N}\}$, which will also be tight on $\chi^2$.  Then by passing to a further subsequence (not relabelled), just like in Proposition \ref{prop:jaku}, we obtain the following result by way of Jakubowski--Skorokhod representation theorem:
\begin{proposition}
\label{prop:jakuKrylov}
There exists a complete probability space $(\overline{\Omega},\overline{\mathcal{F}},\overline{\mathbb{P}})$ with $\chi^2$-valued
Borel measurable random variables $\{(\hat{b}_{m_k},\hat{q}_{m_k},\hat{b}_{0,m_k},\hat{q}_{0,m_k}, \check{b}_{n_k}, \check{q}_{n_k} , \check{b}_{0,n_k}, \check{q}_{0,n_k} ,\{\overline{W}^i_k\}_{i\in \mathbb{N}})\}_{k\in \mathbb{N}}$ and $(\hat{b},\hat{q},\hat{b}_0,\hat{q}_0, \check{b}, \check{q}, \check{b}_0, \check{q}_0, \{\overline{W}^i\}_{i\in \mathbb{N}})$ such that (up to a subsequence, not relabelled):
\begin{itemize}
\item the law of $\{(\hat{b}_{m_k},\hat{q}_{m_k},\hat{b}_{0,m_k},\hat{q}_{0,m_k}, \check{b}_{n_k}, \check{q}_{n_k} , \check{b}_{0,n_k}, \check{q}_{0,n_k} ,\{\overline{W}^i_k\}_{i\in \mathbb{N}})\}_{k\in \mathbb{N}}$ is 
\\
$ \mathcal{L}[{b_{m_k},q_{m_k},b_{0,m_k},q_{0,m_k}, b_{n_k},q_{n_k},b_{0,n_k},q_{0,n_k},\{W^i_k\}_{i\in \mathbb{N}}}], \,k \in \mathbb{N}$;
\item the law of $(\hat{b},\hat{q},\hat{b}_0,\hat{q}_0, \check{b}, \check{q},\check{b}_0, \check{q}_0,\{\overline{W}^i\}_{i\in \mathbb{N}})$ is a Radon measure;
\item the following convergence
\begin{align}
\label{jakuKrylov}
&(\hat{b}_{m_k},\hat{q}_{m_k},\hat{b}_{0,m_k},\hat{q}_{0,m_k}, \check{b}_{n_k}, \check{q}_{n_k} , \check{b}_{0,n_k}, \check{q}_{0,n_k} ,\{\overline{W}^i_k\}_{i\in \mathbb{N}}) \rightarrow (\hat{b},\hat{q},\hat{b}_0,\hat{q}_0, \check{b}, \check{q}, \check{b}_0, \check{q}_0, \{\overline{W}^i\}_{i\in \mathbb{N}}) \quad\text{in}\quad \chi^2
\end{align}
holds $\tilde{\mathbb{P}}$-a.s. as $k\rightarrow\infty$.
\end{itemize}
\end{proposition}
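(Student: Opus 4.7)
The plan is to obtain tightness of the joint law of the pair of Galerkin approximations on $\chi^2$ and then invoke the Jakubowski--Skorokhod representation theorem in the form used previously (cf.\ Proposition \ref{prop:jaku}), since $\chi^2$ is a separable topological space on which Jakubowski's theorem applies.

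First, I would verify tightness of $\{\mathcal{L}[b_{m_k},q_{m_k},b_{0,m_k},q_{0,m_k},b_{n_k},q_{n_k},b_{0,n_k},q_{0,n_k},\{W^i_k\}_{i\in\mathbb{N}}]:k\in\mathbb{N}\}$ on $\chi^2$. Since each of the two quadruples $(b_{m_k},q_{m_k},b_{0,m_k},q_{0,m_k})$ and $(b_{n_k},q_{n_k},b_{0,n_k},q_{0,n_k})$ is itself a Galerkin solution of the truncated system \eqref{ceCutGar}--\eqref{constrtGar} subjected to the same dataset \eqref{dataGarlerkin}, the uniform bound \eqref{unifGarlen} together with the fractional-time estimate of Lemma \ref{lem:fractTime} and Corollary \ref{coe:holderCont} carry over verbatim, so the tightness of each marginal on
$C([0,T];W^{s,2}(\mathbb{T}^2))\times C([0,T];W^{s-1,2}(\mathbb{T}^2))\times W^{3,2}(\mathbb{T}^2)\times W^{2,2}(\mathbb{T}^2)$
follows exactly as in Corollary \ref{cor:tightnessSmooth}. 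The Brownian noise marginal $\{\mathcal{L}[\{W^i_k\}_{i\in\mathbb{N}}]\}$ is a singleton on $C([0,T])^\mathbb{N}$ and hence trivially tight. Joint tightness on $\chi^2$ is then immediate, because if $K_1,K_2,K_3$ are compact subsets of the three tight marginals chosen so that each marginal assigns mass at least $1-\varepsilon/3$ to them, then $K_1\times K_2\times K_3$ is a compact subset of $\chi^2$ of joint mass at least $1-\varepsilon$.

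Next, armed with joint tightness, I would apply the Jakubowski--Skorokhod representation theorem (\cite[Theorem 2.7.1]{breit2018stoch}) to extract a subsequence (not relabelled) and produce a new complete probability space $(\overline{\Omega},\overline{\mathcal{F}},\overline{\mathbb{P}})$ carrying $\chi^2$-valued Borel random variables
$(\hat b_{m_k},\hat q_{m_k},\hat b_{0,m_k},\hat q_{0,m_k},\check b_{n_k},\check q_{n_k},\check b_{0,n_k},\check q_{0,n_k},\{\overline W^i_k\}_{i\in\mathbb{N}})$
with exactly the laws of the originals, together with a $\chi^2$-valued limit random variable
$(\hat b,\hat q,\hat b_0,\hat q_0,\check b,\check q,\check b_0,\check q_0,\{\overline W^i\}_{i\in\mathbb{N}})$
such that the convergence \eqref{jakuKrylov} holds $\overline{\mathbb{P}}$-a.s. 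The Radon property of the limit law follows from the standard consequence of Jakubowski's theorem on separable sub-Polish spaces; since each factor space in $\chi^2$ is in fact a separable (complete) metric space, $\chi^2$ itself is Polish and every Borel probability measure on it is automatically Radon.

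The routine step, and the only place where one must be slightly careful, is to check that the two ``hatted'' and ``checked'' copies of the Brownian driver $\{\overline W^i_k\}_{i\in\mathbb{N}}$ coincide as random variables (not merely in law). This holds because in the original joint law the two copies of $\{W^i_k\}_{i\in\mathbb{N}}$ are identified (the two Galerkin solutions are driven by the same Brownian motions on the original stochastic basis), so the joint marginal on the noise component is concentrated on the diagonal; the Jakubowski--Skorokhod construction preserves this, giving a single $\{\overline W^i_k\}_{i\in\mathbb{N}}$ common to both families. I do not anticipate any genuine obstacle beyond bookkeeping: every analytical estimate needed has been established in Section \ref{subsub:uniEst} and Lemma \ref{lem:fractTime}.
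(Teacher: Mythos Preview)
Your proposal is correct and follows exactly the approach indicated in the paper: tightness on $\chi^2$ is obtained as a direct adaptation of Corollary \ref{cor:tightnessSmooth}, and then the Jakubowski--Skorokhod representation theorem is invoked just as in Proposition \ref{prop:jaku}. One small remark: your final paragraph about reconciling ``two copies'' of the Brownian driver is unnecessary, since by the very definition of the path space $\chi^2$ there is only a single $C([0,T])^{\mathbb{N}}$ factor, so only one noise component is ever carried along.
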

\noindent Having obtained Proposition \ref{prop:jakuKrylov}, we now apply the arguments in Section \ref{sec:strongMartWithCut} separately to
\begin{align*}
\hat{b}_{m_k},\hat{q}_{m_k},\hat{b}_{0,m_k},\hat{q}_{0,m_k},\{\overline{W}^i_k\}_{i\in \mathbb{N}},
\hat{b},\hat{q},\hat{b}_0,\hat{q}_0,  \{\overline{W}^i\}_{i\in \mathbb{N}}
\end{align*}
and 
\begin{align*}
 \check{b}_{n_k}, \check{q}_{n_k} ,\check{b}_{0,n_k}, \check{q}_{0,n_k} ,\{\overline{W}^i_k\}_{i\in \mathbb{N}}), \check{b}, \check{q}, \check{b}_0, \check{q}_0, \{\overline{W}^i\}_{i\in \mathbb{N}}.
\end{align*}
Just as in Proposition \ref{prop:smoothMartin}, it will follow that, relative to the same Browian motions $\{\overline{W}^i\}_{i\in \mathbb{N}}$ and the same stochastic basis
$(\overline{\Omega},\overline{\mathcal{F}},(\overline{\mathcal{F}})_{t\geq0},\overline{\mathbb{P}})$ where
\begin{align*}
\overline{\mathcal{F}}_t=\sigma \Big(\sigma_t[\hat{b}] \cup \sigma_t[\hat{q}] \cup
\sigma_t[\check{b}] \cup \sigma_t[\check{q}] \cup_{i\in \mathbb{N}} \sigma_t[\overline{W}^i] \Big), \quad t\in [0,T],
\end{align*}
$
(\hat{b},\hat{q})\text{ and }( \check{b}, \check{q})
$
are strong martingale solutions of  \eqref{ceCut}--\eqref{constrtCut} with corresponding initial conditions $
(\hat{b}_0,\hat{q}_0)\text{ and }( \check{b}_0, \check{q}_0)
$. Furthermore,
\begin{align*}
\overline{\mathbb{P}}\big(\omega\in \overline{\Omega}\, :\, [\hat{b}_0, \hat{q}_0] = [\check{b}_0, \check{q}_0]\big)=1.  
\end{align*}
Indeed, since $b_{0, m_k}= P_{m_k}b_0$ and $b_{0, n_k}= P_{n_k}b_0$,
for any $l\leq m_k \wedge n_k$,
\begin{align*}
\overline{\mathbb{P}}\big(\omega\in \overline{\Omega}\, :\, [P_l\hat{b}_{0,m_k}, P_l\hat{q}_{0,m_k}] = [P_l\check{b}_{0,n_k}, P_l\check{q}_{0,n_k}]\big)
=
\mathbb{P}\big(\omega\in \Omega\, :\, [P_lb_{0,m_k}, P_lq_{0,m_k}] = [P_lb_{0,n_k}, P_lq_{0,n_k}]\big)=1  
\end{align*}
by equality of the laws and thus the claim follows. 
\\Now denote by $\mu_{m_k,n_k}$ and $\mu$, the joint laws of $(\hat{b}_{m_k}, \hat{q}_{m_k}, \check{b}_{m_k}, \check{q}_{m_k})$ and $(\hat{b}, \hat{q}, \check{b}, \check{q})$ respectively. Then due to \eqref{jakuKrylov},  $\mu_{m_k,n_k} \rightharpoonup \mu$ as $k\rightarrow\infty$. Since $(\hat{b}_0, \hat{q}_0) = (\check{b}_0, \check{q}_0)$ holds $\overline{\mathbb{P}}$-a.s., by pathwise uniqueness, see Section \ref{sec:pathUniqueCutOff1}, $(\hat{b}, \hat{q}) = (\check{b}, \check{q})$ holds $\overline{\mathbb{P}}$-a.s. and as such, for $X:=C([0,T];W^{s,2}(\mathbb{T}^2)) \times C([0,T];W^{s-1,2}(\mathbb{T}^2))$,
\begin{align*}
    \mu \big((x,y)\in X \times X \,:\, x=y  \big)
=\overline{\mathbb{P}}\big(\omega\in \overline{\Omega}\,:\,  [\hat{b},\hat{q}] = [\check{b}, \check{q}] \big) =1.
\end{align*}
We now have all in hand to apply Gy\"ongy--Krylov's characterization of convergence \cite{gyongy1996existence} or its generalization to quasi-Polish spaces \cite[ Theorem 2.10.3]{breit2018stoch}. 
It implies that  the original sequence $(b_n,q_n)$ defined on the original probability space $(\Omega,\mathcal{F}, \mathbb{P})$ converges in probability to some random variables
$(b,q)$ in the topology of $C([0,T]; W^{s,2}(\mathbb{T}^2)) \times C([0,T]; W^{s-1,2}(\mathbb{T}^2)) $. By taking a subsequence if need be, recall \eqref{jakuKrylov}, we obtain this convergence almost surely. We can then repeat the identification-of-the-limit process for the original sequence on the original probability space (just as was done for the new processes on the new space in 
Section \ref{sec:strongMartWithCut}) to finally deduce that, indeed, the `original' limit is the unique strong pathwise
solution of \eqref{ceCut}--\eqref{constrtCut}.  
In summary, we have shown the following result on the existence of a strong pathwise solution of  \eqref{ceCut}--\eqref{constrtCut}.
\begin{proposition}
\label{prop:smoothPathwise}
Fix $R>0$. Let $((\bm{\xi}_i)_{i\in \mathbb{N}}, \bu_h, f, b_0, q_0)$ satisfy \eqref{dataGarlerkin}.
Let $(\Omega, \mathcal{F},(\mathcal{F}_t)_{t\geq0}, \mathbb{P})$ be a given stochastic
basis with a complete right-continuous filtration and let $\{W^i\}_{i\in \mathbb{N}}$ be a family of independent $(\mathcal{F}_t)$-Brownian motions. Then there exists $(\mathcal{F}_t)$-progressively measurable processes
\begin{align*}
&b \in L^2 \big( \Omega;C([0,T]; W^{3,2}(\mathbb{T}^2)) \big),
\qquad
&q \in L^2 \big( \Omega;C([0,T]; W^{2,2}(\mathbb{T}^2)) \big)
\end{align*}
such that $(b, q)$ uniquely solves \begin{align}
 {b}(t)= {b}(0) - \int_0^t \theta_R[( {\bu} \cdot \nabla)  {b} ]\ds -\int_0^t (\bm{\xi}_i\cdot\nabla)  {b} \circ \dd  {W}^i_\sigma,
\\
 {q}(t) =  {q}(0) - \int_0^t\theta_R[( {\bu}\cdot \nabla)(  {q} -  {b})]\ds - \int_0^t(\bu_h \cdot \nabla)  {b} \ds  - \int_0^t(\bm{\xi}_i\cdot\nabla) ( {q}-  {b}) \circ \dd  {W}^i_\sigma
\end{align}
$ {\mathbb{P}}$-a.s. for all $t\in[0,T]$.
\end{proposition}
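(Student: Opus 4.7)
The plan is to obtain Proposition \ref{prop:smoothPathwise} by combining the existence of a strong martingale solution from Proposition \ref{prop:smoothMartin} with the pathwise uniqueness established in Section \ref{sec:pathUniqueCutOff1}, via a Yamada--Watanabe type argument in the form of the Gy\"ongy--Krylov characterization of convergence in probability. The conceptual point is that pathwise uniqueness plus existence of martingale solutions upgrades existence of a Galerkin-approximated limit in law to convergence in probability of the original Galerkin sequence, which then yields a solution adapted to the original filtration.

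First, I would pair two Galerkin approximations $(b_m,q_m)$ and $(b_n,q_n)$ constructed from the same Brownian motions $\{W^i\}_{i\in \mathbb{N}}$ and the common initial data $(b_0,q_0)$, and consider the joint law of
\begin{align*}
\bigl(b_m,q_m,b_{0,m},q_{0,m},\,b_n,q_n,b_{0,n},q_{0,n},\,\{W^i\}_{i\in\mathbb{N}}\bigr)
\end{align*}
on the extended path space $\chi^2$. Tightness of this family on $\chi^2$ follows from exactly the same estimates used in Corollary \ref{cor:tightnessSmooth}, since each coordinate projection inherits the uniform bounds \eqref{unifGarlen} and the H\"older estimate of Corollary \ref{coe:holderCont}. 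An application of the Jakubowski--Skorokhod theorem (mirroring Proposition \ref{prop:jaku}) then furnishes a new probability space $(\overline{\Omega},\overline{\mathcal{F}},\overline{\mathbb{P}})$ carrying random variables $(\hat{b}_{m_k},\hat{q}_{m_k},\check{b}_{n_k},\check{q}_{n_k},\ldots)$ that converge $\overline{\mathbb{P}}$-a.s.\ in $\chi^2$ to some limit $(\hat{b},\hat{q},\check{b},\check{q},\ldots)$, with laws preserved along the subsequence.

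Next, I would run the limit-identification argument of Section \ref{sec:strongMartWithCut} \emph{separately} on each of the two copies, to conclude that $(\hat{b},\hat{q})$ and $(\check{b},\check{q})$ are both strong martingale solutions of \eqref{ceCut}--\eqref{constrtCut} with respect to the \emph{same} Brownian motions $\{\overline{W}^i\}_{i\in\mathbb{N}}$ on the common stochastic basis $(\overline{\Omega},\overline{\mathcal{F}},(\overline{\mathcal{F}}_t)_{t\geq 0},\overline{\mathbb{P}})$ filtered by the natural augmentation of $\sigma_t[\hat b]\cup\sigma_t[\hat q]\cup\sigma_t[\check b]\cup\sigma_t[\check q]\cup_i\sigma_t[\overline{W}^i]$. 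Crucially, because $b_{0,m_k}=P_{m_k}b_0$ and $b_{0,n_k}=P_{n_k}b_0$ arise from projecting the same initial datum, equality of laws forces $(\hat b_0,\hat q_0)=(\check b_0,\check q_0)$ almost surely in the limit. The pathwise uniqueness result of Section \ref{sec:pathUniqueCutOff1} then applies on $(\overline{\Omega},\overline{\mathcal{F}},\overline{\mathbb{P}})$ and gives $(\hat{b},\hat{q})=(\check{b},\check{q})$ $\overline{\mathbb{P}}$-a.s., so the joint law $\mu$ of $(\hat b,\hat q,\check b,\check q)$ is concentrated on the diagonal of $X\times X$ with $X:=C([0,T];W^{s,2})\times C([0,T];W^{s-1,2})$.

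Finally, the Gy\"ongy--Krylov lemma (in the quasi-Polish form \cite[Theorem 2.10.3]{breit2018stoch}) converts the diagonal concentration into convergence in probability of the \emph{original} Galerkin sequence $(b_n,q_n)$ on the \emph{original} probability space $(\Omega,\mathcal{F},\mathbb{P})$, in the topology of $C([0,T];W^{s,2})\times C([0,T];W^{s-1,2})$. Extracting an a.s.\ convergent subsequence and repeating the identification-of-the-limit step from Section \ref{sec:strongMartWithCut} (now on $(\Omega,\mathcal{F},\mathbb{P})$ with the original $\{W^i\}_{i\in\mathbb{N}}$) produces the required $(\mathcal{F}_t)$-progressively measurable solution $(b,q)$ with the claimed regularity; uniqueness is inherited from Section \ref{sec:pathUniqueCutOff1}. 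I expect the main subtlety to be the bookkeeping in the limit-identification on the new space--ensuring in particular that the Stratonovich stochastic integrals pass to the limit, which requires the lemma of Debussche--Glatt-Holtz--Temam cited as \cite[Lemma 2.6.6]{breit2018stoch}--and verifying that the extra cut-off factor $\theta_R^n$ converges to $\theta_R$ using the continuous embeddings $W^{s,2}\hookrightarrow W^{1,\infty}$ and $W^{s-1,2}\hookrightarrow L^\infty$ for $s>2$.
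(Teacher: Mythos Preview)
Your proposal is correct and follows essentially the same approach as the paper: the Gy\"ongy--Krylov argument applied to paired Galerkin subsequences on the extended path space $\chi^2$, with Jakubowski--Skorokhod representation, separate limit identification yielding two martingale solutions with coinciding initial data, pathwise uniqueness forcing diagonal concentration, and finally convergence in probability of the original sequence on $(\Omega,\mathcal{F},\mathbb{P})$. The subtleties you flag---passage to the limit in the stochastic integrals and convergence of $\theta_R^n$---are precisely those handled in the paper.
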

To emphasize that this solution solves a system with cut-offs, we will denote this limit solution by $(b_R,q_R)$. Our next goal is to remove the cut-off to obtain a unique maximal strong pathwise solution of the original problem \eqref{ce}--\eqref{constrt}.

\subsection{Maximal strong pathwise solutions of the original system}
\label{sec:pathSolOrig}
In the previous section, we have shown the existence of a unique strong pathwise solution of the approximate system \eqref{ceCut}--\eqref{constrtCut} with cut-offs. Our goal in this section is to use the solution constructed for this approximate system to obtain a unique strong pathwise solution of our original system \eqref{ce}--\eqref{constrt} followed by the maximality and uniqueness of the stopping time. We shall now start off with the construction of the local solutions for our original system.

\subsubsection{Existence of a local strong pathwise solution for the original system}
In the following, we define the stopping time $\tau_R$ as
\begin{align*}
    \tau_R:=  \inf\big\{t \in [0,T] \,:\,\Vert \nabla b_R(t) \Vert_{\infty}+ \Vert \nabla\bu_R(t) \Vert_{\infty}+\Vert q_R(t) \Vert_{\infty} 
    \geq 
    R  \big\}
\end{align*}
with the convention that $\inf\emptyset=\infty$.
Note that $\tau_R$ is a well-defined stopping time since the map \\  $t \mapsto \sup_{s\in[0,t]}\Vert(\nabla b_R,\nabla \bu_R,q_R)(s)\Vert_\infty $ is continuous and adapted. This follows from the fact that
  $(b_R,q_R)$ has continuous trajectories in $W^{3,2}(\mathbb{T}^2)\times W^{2,2}(\mathbb{T}^2)$ and the following continuous embeddings
\begin{align*}
\Vert \nabla b_R \Vert_{\infty}\lesssim \Vert b_R \Vert_{3,2}, \qquad
\Vert \nabla\bu_R \Vert_{\infty}+\Vert q_R \Vert_{\infty} \lesssim 1+\Vert q_R \Vert_{2,2} 
\end{align*}
hold for any given $f\in W^{2,2}(\mathbb{T}^2)$. With this stopping time in hand, an immediate consequence of Proposition \ref{prop:smoothPathwise} is the following.
\begin{corollary}
\label{cor:smoothPathwise}
Fix  $R>0$. For $((\bm{\xi}_i)_{i\in \mathbb{N}}, \bu_h, f, b_0, q_0)$ satisfying \eqref{dataGarlerkin}, there exists a local strong pathwise solution $(b_R,q_R,\tau_R)$ of \eqref{ce}--\eqref{constrt}.
\end{corollary}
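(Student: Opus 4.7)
The plan is to take the solution $(b_R, q_R)$ produced by Proposition \ref{prop:smoothPathwise} for the truncated system \eqref{ceCut}--\eqref{constrtCut} and observe that, up to the stopping time $\tau_R$ defined above, the cut-off $\theta_R$ is identically equal to $1$, so the truncated equations collapse onto the original equations \eqref{ce}--\eqref{constrt}. The triplet $(b_R, q_R, \tau_R)$ then automatically satisfies Definition \ref{def:locStrongPathSol}.

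First, I would check that $\tau_R$ is a genuine $(\mathcal{F}_t)$-stopping time. Since $(b_R, q_R)$ has $\mathbb{P}$-a.s. continuous trajectories in $W^{3,2}(\mathbb{T}^2) \times W^{2,2}(\mathbb{T}^2)$, the Sobolev embeddings $W^{3,2}(\mathbb{T}^2) \hookrightarrow W^{1,\infty}(\mathbb{T}^2)$ and $W^{2,2}(\mathbb{T}^2) \hookrightarrow L^\infty(\mathbb{T}^2)$ give continuity of $t \mapsto \Vert \nabla b_R(t) \Vert_\infty + \Vert q_R(t) \Vert_\infty$, and the elliptic bound from Lemma \ref{lem:constrt0} applied to $q_R = (\Delta - 1)\psi_R + f$ (with $f \in W^{2,2}$) controls $\Vert \nabla \bu_R(t) \Vert_\infty$ by $\Vert q_R(t) \Vert_{2,2}$, giving continuity of the full sum. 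Because this sum is continuous and $(\mathcal{F}_t)$-adapted, the hitting time $\tau_R$ is an $(\mathcal{F}_t)$-stopping time.

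Next, I would verify $\tau_R > 0$ $\mathbb{P}$-a.s. The data class \eqref{dataGarlerkin} imposes $(b_0, q_0) \in L^\infty(\Omega; W^{3,2} \times W^{2,2})$, so the quantity $\Vert \nabla b_0 \Vert_\infty + \Vert \nabla \bu_0 \Vert_\infty + \Vert q_0 \Vert_\infty$ is essentially bounded in $\omega$. Choosing $R$ strictly larger than this essential bound (which is the setting in which the corollary is to be applied before piecing together solutions in Section \ref{sec:pathSolOrig}), the norm is strictly below $R$ at $t=0$ almost surely, and by path continuity remains so on a strictly positive, $(\mathcal{F}_t)$-adapted random interval. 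Thus $\tau_R > 0$ $\mathbb{P}$-a.s.

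On the stochastic interval $[0, \tau_R]$, by definition of $\tau_R$ and the profile \eqref{cutoff}, the cut-off satisfies $\theta_R = \theta_R(\Vert \nabla b_R \Vert_\infty + \Vert \nabla \bu_R \Vert_\infty + \Vert q_R \Vert_\infty) \equiv 1$. Consequently, the Stratonovich-form equations from Proposition \ref{prop:smoothPathwise} reduce, once converted to It\^o form via the standard correction $\mathcal{G}_i \mathbf{g} \circ \mathrm{d}W^i = \tfrac12 \mathcal{G}_i^2 \mathbf{g}\, \mathrm{d}t + \mathcal{G}_i \mathbf{g}\, \mathrm{d}W^i$, to exactly the Itô integral identities required in Definition \ref{def:locStrongPathSol} (the truncation does not affect the Stratonovich noise, so the correction is unchanged). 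The remaining conditions of Definition \ref{def:locStrongPathSol}---progressive measurability and the continuous path regularity of $(b_R(\cdot \wedge \tau_R), q_R(\cdot \wedge \tau_R))$ in $W^{3,2} \times W^{2,2}$---are inherited directly from Proposition \ref{prop:smoothPathwise}, and the corollary follows. There is no substantial obstacle here: the only mildly delicate point is the strict positivity of $\tau_R$, which is what the $L^\infty(\Omega)$ hypothesis on the initial data in \eqref{dataGarlerkin} is precisely designed to secure.
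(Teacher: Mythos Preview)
Your proposal is correct and follows exactly the approach the paper takes: the paper's proof is a one-sentence remark that the corollary follows from Proposition \ref{prop:smoothPathwise} (the stopping-time properties of $\tau_R$ having been verified in the paragraph immediately preceding the corollary), and you have simply unpacked the same argument in more detail by explicitly checking each clause of Definition \ref{def:locStrongPathSol}. Your caveat that strict positivity of $\tau_R$ requires $R$ to dominate the $L^\infty(\Omega)$ bound on the initial data is a valid observation that the paper leaves implicit.
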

Note that this corollary follows from Proposition \ref{prop:smoothPathwise} because in both cases,  the initial condition $(b_0, q_0)$ is integrable in $\omega$, see \eqref{dataGarlerkin}.
The extension to general initial data as given in the statement of our overall main result Theorem \ref{thm:main} follows the argument in \cite[Section 5.3.3]{breit2018stoch}.  The main point here is to construct  a stopping time $\tau_{K(R)}$ so that on $[0,\tau_{K(R)})$, the bound
\begin{align}
\label{onWhich}
\sup_{t\in[0,\tau_{K(R)}]}\big(\Vert \nabla b_R(t)\Vert_{\infty} + \Vert \nabla \bu_R(t)\Vert_{\infty} + \Vert q_R(t)\Vert_{\infty}\big)<R
\end{align}
holds $\mathbb{P}$-a.s. This is done by choosing $K>0$  such that $K=K(R) \rightarrow \infty$ as $R\rightarrow \infty$ and $K(R)<R\min \{1, 1/c \}$. Here, $c=c(\Vert f \Vert_{2,2})$ is the constant such that the estimate
\begin{align*}
\Vert \nabla b_R \Vert_{\infty}
+
\Vert \nabla \bu_R \Vert_{\infty}
+
\Vert q_R \Vert_{\infty}
\leq c (1+\Vert b_R \Vert_{3,2}+\Vert q_R \Vert_{2,2} )
\end{align*}
holds. We then define $\tau_{K(R)}$
as
\begin{align*}
    \tau_{K(R)}:=  \inf\big\{t \in [0,T] \,:\,1+\Vert b_R(t) \Vert_{3,2}
    +
    \Vert q_R(t) \Vert_{2,2}
    \geq {K(R)}  \big\}
    \end{align*}
on which we obtain \eqref{onWhich}.
\\
With this information in hand, we can now consider the data $((\bm{\xi}_i)_{i\in \mathbb{N}}, \bu_h, f, b_0, q_0)$ satisfying \eqref{dataMain} rather than \eqref{dataGarlerkin}. In particular, the initial conditions $(b_0, q_0)$ are no longer assumed to be integrable with respect to the probability space. We then define the set
\begin{align*}
U_{K(R)}=\big\{(b,q)\in W^{3,2}(\mathbb{T}^2) \times W^{2,2}(\mathbb{T}^2)\,:\, \Vert b \Vert_{3,2} + \Vert q \Vert_{2,2}<K \big\}
\end{align*}
on which
\begin{align}
\label{indicatorIC}
(b_0,q_0)\bm{1}_{(b_0,q_0)\in\{U_{K(M)}\setminus \cup_{J=1}^{M-1}U_{K(J)} \}}
\end{align} 
is now bounded in $\omega$ for any initial condition $(b_0, q_0)$ satisfying \eqref{dataMain}. Therefore, for $(b_0, q_0)$ satisfying \eqref{dataMain}, Proposition \ref{prop:smoothPathwise}  applies to
\eqref{indicatorIC} leading to the existence of a unique pathwise solution $(b_M,q_M)$ of the cut-off system \eqref{ceCut}--\eqref{constrtCut} with $R=M$. Furthermore, similar to Corollary \ref{cor:smoothPathwise}, we can remove the cut-off and obtain instead, a local strong pathwise solution $(b_M,q_M,\tau_{K(M)})$ of the original system  \eqref{ce}--\eqref{constrt} with initial condition \eqref{indicatorIC}. Moreover, by summing up these solutions, we find that
\begin{align*}
(b,q)=\sum_{M\in \mathbb{N}}(b_M,q_M)\bm{1}_{(b_0,q_0)\in\{U_{K(M)}\setminus \cup_{J=1}^{M-1}U_{K(J)} \}}
\end{align*}
also solves the same problem up to the a.s. strictly positive stopping time
\begin{align*}
\tau =\sum_{M\in \mathbb{N}}\tau_{K(M)}\bm{1}_{(b_0,q_0)\in\{U_{K(M)}\setminus \cup_{J=1}^{M-1}U_{K(J)} \}}
\end{align*}
for any initial condition $(b_0, q_0)$ satisfying \eqref{dataMain}. In summary, we have shown the following result.
\begin{lemma}
\label{lem:smoothPathwise}
For $((\bm{\xi}_i)_{i\in \mathbb{N}}, \bu_h, f, b_0, q_0)$ satisfying \eqref{dataMain}, there exists a local strong pathwise solution $(b,q,\tau)$ of \eqref{ce}--\eqref{constrt}.
\end{lemma}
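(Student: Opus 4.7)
The plan is to bootstrap from Corollary \ref{cor:smoothPathwise}, which already delivers a local strong pathwise solution under the stronger hypothesis \eqref{dataGarlerkin}; the only discrepancy between \eqref{dataGarlerkin} and \eqref{dataMain} is the $L^\infty(\Omega)$-boundedness of the initial datum, so the strategy is to decompose the sample space $\Omega$ according to the size of $(b_0,q_0)$, solve on each piece with bounded data, and then reassemble the solutions.

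First I would fix a sequence $K(M)\uparrow\infty$ satisfying $K(M)<M\min\{1,1/c\}$, with $c=c(\Vert f\Vert_{2,2})$ the Sobolev constant for which
\begin{align*}
\Vert \nabla b\Vert_\infty+\Vert \nabla \bu\Vert_\infty+\Vert q\Vert_\infty \leq c\bigl(1+\Vert b\Vert_{3,2}+\Vert q\Vert_{2,2}\bigr),
\end{align*}
and set $U_{K(M)}:=\{(b,q):\Vert b\Vert_{3,2}+\Vert q\Vert_{2,2}<K(M)\}$. Because $(b_0,q_0)$ is $\mathcal{F}_0$-measurable and $\mathbb{P}$-a.s.\ finite in the $W^{3,2}\times W^{2,2}$ norm, the events $A_M:=\{(b_0,q_0)\in U_{K(M)}\setminus\bigcup_{J<M}U_{K(J)}\}$ are pairwise disjoint, $\mathcal{F}_0$-measurable, and exhaust $\Omega$ up to a null set. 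On each $A_M$, the truncated initial datum $(b_0,q_0)\mathbf{1}_{A_M}$ lies in $L^\infty(\Omega;W^{3,2}\times W^{2,2})$ (bounded by $K(M)$), so Proposition \ref{prop:smoothPathwise} with cut-off parameter $R=M$ yields a unique strong pathwise solution $(b_M,q_M)$ of the truncated system \eqref{ceCut}--\eqref{constrtCut}, and the choice $K(M)<M/c$ guarantees the stopping time
\begin{align*}
\tau_{K(M)}:=\inf\{t\in[0,T]:1+\Vert b_M(t)\Vert_{3,2}+\Vert q_M(t)\Vert_{2,2}\geq K(M)\}
\end{align*}
is $\mathbb{P}$-a.s.\ strictly positive on $A_M$, with $\theta_R\equiv 1$ on $[0,\tau_{K(M)}]$, so that $(b_M,q_M,\tau_{K(M)})$ is actually a local strong pathwise solution of the untruncated problem on this interval.

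Next, I would define
\begin{align*}
b:=\sum_{M\in\mathbb{N}}b_M\mathbf{1}_{A_M},\qquad q:=\sum_{M\in\mathbb{N}}q_M\mathbf{1}_{A_M},\qquad \tau:=\sum_{M\in\mathbb{N}}\tau_{K(M)}\mathbf{1}_{A_M},
\end{align*}
and verify that $(b,q,\tau)$ meets Definition \ref{def:locStrongPathSol}. Measurability and path continuity transfer immediately from the summands because the indicators are $\mathcal{F}_0$-measurable (hence $(\mathcal{F}_t)$-progressively measurable in a trivial sense) and the $A_M$ partition $\Omega$. Strict positivity of $\tau$ follows since $\tau_{K(M)}>0$ $\mathbb{P}$-a.s.\ on $A_M$. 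The integral identity is obtained by multiplying the equation satisfied by $(b_M,q_M)$ through by $\mathbf{1}_{A_M}$, using that $\mathcal{F}_0$-measurable factors commute with the Lebesgue and It\^o integrals, and summing in $M$.

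The one delicate point, and the main thing to get right, is the compatibility between the cut-off threshold $R=M$ and the exit threshold $K(M)$: we must ensure that the cut-off $\theta_M$ has not yet activated at $t=0$ on $A_M$, and that the solution cannot exit $U_{K(M)}$ before the cut-off would trigger. This is precisely what the inequality $K(M)<M\min\{1,1/c\}$ enforces, via the Sobolev embedding displayed above. Everything else is essentially bookkeeping, and no new analytic estimates beyond those already used for Proposition \ref{prop:smoothPathwise} and Corollary \ref{cor:smoothPathwise} are required.
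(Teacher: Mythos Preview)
Your proposal is correct and follows essentially the same approach as the paper: decompose $\Omega$ into the $\mathcal{F}_0$-measurable shells $A_M=\{(b_0,q_0)\in U_{K(M)}\setminus\bigcup_{J<M}U_{K(J)}\}$, apply Proposition~\ref{prop:smoothPathwise} with cut-off level $R=M$ to the bounded datum $(b_0,q_0)\mathbf{1}_{A_M}$, use the inequality $K(M)<M\min\{1,1/c\}$ to ensure the cut-off is inactive up to $\tau_{K(M)}$, and then patch together the pieces via the indicator-weighted sums. Your identification of the compatibility between $K(M)$ and $M$ as the one delicate point matches the paper's emphasis precisely.
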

Our next goal is to extend the local strong pathwise solution constructed above to a maximal existence time.
\subsubsection{Maximal strong pathwise solution for the original system}

The idea used to extend the solution constructed in the previous section to a maximal time of existence $\tau$ follows the argument in \cite[Section 5.3.4]{breit2018stoch} which rely on \cite[Chapter 5, Section 18]{doob1994measure} to justify the existence of such a maximal time. Indeed,  letting
\begin{align*}
\mathfrak{d}_R=\tau \wedge \inf\big\{t\in [0,T] \, :\, \Vert \nabla b(t)\Vert_{\infty}+ \Vert\nabla \bu(t)\Vert_{\infty}+ \Vert q(t)\Vert_{\infty} \geq R  \big\},
\end{align*}
(where $(\mathfrak{d}_R,\tau)$ corresponds to $(\tau_R, \mathfrak{t})$ respectively in \cite[Section 5.3.4]{breit2018stoch}), leads to the existence of the maximal solution $(b,q,\tau_R, \tau)$ by repeating the arguments in \cite[Section 5.3.4]{breit2018stoch} verbatim except for the obvious change in notations for the various time steps and for the solution.
\subsubsection{Uniqueness of maximal strong pathwise solutions}
Let $(b_i,q_i, \tau_R^i, \tau^i)$, $i=1,2$ be two maximal strong pathwise solutions of \eqref{ce}--\eqref{constrt} with the same general data $((\bm{\xi}_i)_{i\in \mathbb{N}}, \bu_h, f, b_0, q_0)$ satisfying \eqref{dataMain}. If we define
\begin{align*}
\Omega_K=\Big\{\omega \in \Omega\, :\, \Vert b_0(\omega) \Vert_{3,2}+ \Vert q_0(\omega) \Vert_{2,2}<K \Big\}
\end{align*}
so that $\Omega=\cup_{K\in \mathbb{R}} \Omega_K$, we can then infer from \eqref{unifGarlen} that for each $i=1,2$,
\begin{equation}
\begin{aligned}
 \mathbb{E}\bigg[\bm{1}_{\Omega_K}\sup_{t\in[0,T\wedge \tau_R^i]}&\big(\Vert  b_i(t)\Vert_{3,2}^4+\Vert  q_i(t )\Vert_{2,2}^4\big) 
 \bigg]
\lesssim1
\end{aligned}
\end{equation}
%
%
%
with a constant depending only on $R,K, \Vert \bu_h\Vert_{3,2}$ and $\Vert f\Vert_{2,2}$. Consequently, we can can apply the pathwise uniqueness result from Section \ref{sec:pathUniqueCutOff2} on $\tau_R^1\wedge\tau_R^2$ to conclude that
\begin{align*}
\mathbb{P}\big(\bm{1}_{\Omega_K} [b_1,q_1](t\wedge \tau_R^1\wedge \tau_R^2) = \bm{1}_{\Omega_K} [b_2,q_2](t\wedge \tau_R^1\wedge \tau_R^2)\ \text{ for all } t\in [0,T] \big) =1.
\end{align*}
We can now use dominated convergence to pass to the limit $R,K \rightarrow \infty$ and obtain
\begin{align*}
\mathbb{P}\big( [b_1,q_1](t\wedge \tau^1\wedge \tau^2) = [b_2,q_2](t\wedge \tau^1\wedge \tau^2)\ \text{ for all } t\in [0,T] \big) =1.
\end{align*}
The two solutions therefore coincide up to the stopping time $\tau^1 \wedge \tau^2$. However since $\tau^1$ and  $\tau^2$ are both maximal times, it  follows that $\tau^1 = \tau^2$ almost surely.
\\
This finishes the proof of Theorem \ref{thm:main}.

\begin{proof}[Proof of Theorem \ref{thm:stability}]
The proof of Theorem \ref{thm:stability} is the same as \eqref{bandq32estUniqSummProdxY} for the system with cut-offs. Here, the stopping times $(\tau_R^j)_{R\in \mathbb{N}}$, $j=1,2$ replaces the role of the cut-offs. 
\end{proof}

\section{Blowup of the Stochastic Thermal Quasigeostrophic Equations}
\label{sec:blowup}
Let $(b,q, (\tau_R)_{R\in \mathbb{N}}, \tau)$ be the unique maximal strong pathwise solution of \eqref{ce}--\eqref{constrt} as constructed in Section \ref{sec:MainWellPose} above for a given set of data. 
As a consequence of this construction, we have seen that,  recall Remark  \ref{rem:weakBlowup}, if $\tau<\infty$, then
\begin{align*}
\limsup_{t\nearrow\tau}\big( \Vert \nabla b(t)\Vert_{\infty} + \Vert \nabla \bu(t) \Vert_{\infty} + \Vert q(t)\Vert_\infty \big)=\infty \qquad \mathbb{P}\text{-a.s.}
\end{align*}
Thus, the three quantities announces the blowup of solution or numerical simulation at the finite time $\tau$. This is however too restrictive and it turns out that we do better.
\\
In the following, we want to generalize this to a blowup criteria of Beale--Kato--Majda-type \cite{beale1984remarks}.
In particular, we show the following result.

\begin{theorem}
\label{thm:bkmTQG}
Let $(b,q, (\tau_R)_{R\in \mathbb{N}}, \tau)$ be a unique maximal strong pathwise solution of \eqref{ce}--\eqref{constrt} with a dataset
$((\bm{\xi}_i)_{i\in \mathbb{N}}, \bu_h, f, b_0, q_0)$ satisfying:
\begin{equation}
\begin{aligned}
\label{dataTQGBlowup}
\bm{\xi}_i&\in W^{4,\infty}_{\mathrm{div}}(\mathbb{R}^2),
\quad \sum_{i\in \mathbb{N}} \Vert \bm{\xi}_i \Vert_{4,\infty}<\infty,\quad\bu_h \in W^{3,2}_{\mathrm{div}}(\mathbb{R}^2), \quad  f\in W^{2,2}(\mathbb{R}^2)\quad \text{and}
\\
&(b_0, q_0) \in  W^{3,2}(\mathbb{R}^2) \times W^{2,2}(\mathbb{R}^2) \text{ is a pair of } \mathcal{F}_0\text{-measurable random variables.}
\end{aligned}
\end{equation}
For $M,N \in \mathbb{N}$,
let $\tau_M^1$ and $\tau_N^2$ be
defined as
\begin{align*}
& \tau_M^1=\inf  \bigg\{ t\geq 0 \, : \,\int_0^t\big( \Vert \nabla b(s)\Vert_{\infty} +
 \Vert q(s)\Vert_\infty \big)\dd s\geq M \bigg\},
\\&
\tau_N^2 =\inf \big\{ t\geq 0 \, : \,
 \Vert b(t)\Vert_{3,2} + \Vert q(t) \Vert_{2,2}\geq N \big\},
\end{align*}
with $\tau^1 = \lim_{M\rightarrow \infty} \tau_M^1$ and $\tau^2 = \lim_{N\rightarrow \infty} \tau_N^2$.
Then
\begin{align}
\label{tau1tau2tau}
\tau^1= \tau^2 =\tau
\end{align}
$ \mathbb{P}$-a.s.
\end{theorem}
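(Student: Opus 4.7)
The plan is to show $\tau^2=\tau$ and $\tau^1=\tau^2$ separately; the first is a direct consequence of Sobolev embeddings together with the maximality characterisation in Definition \ref{def:MaxStrongPathSol}, while the second is a stochastic Beale--Kato--Majda estimate in the spirit of \cite{crisan2019solution,beale1984remarks}. For the first, in two dimensions the embeddings $W^{3,2}\hookrightarrow W^{1,\infty}$ and $W^{2,2}\hookrightarrow L^\infty$ combined with Lemma \ref{lem:constrt0} (which gives $\Vert\bu\Vert_{3,2}\lesssim 1+\Vert q\Vert_{2,2}$) yield
\[
\Vert\nabla b\Vert_\infty+\Vert\nabla\bu\Vert_\infty+\Vert q\Vert_\infty\lesssim 1+\Vert b\Vert_{3,2}+\Vert q\Vert_{2,2}.
\]
Finiteness of the Sobolev norms therefore precludes the lower-norm blowup required on $[\tau<T]$, so $\tau^2\le\tau$; the reverse inequality follows because the maximal solution lies in $C([0,\tau_R];W^{3,2}\times W^{2,2})$ for every $R$ and $\tau_R\nearrow\tau$. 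The inequality $\tau^1\geq\tau^2$ is now immediate, since for $t<\tau_N^2$ the integrand in the definition of $\tau_M^1$ is bounded by a constant depending only on $N$.

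For the substantive direction $\tau^1\le\tau^2$, I would apply $\partial^\beta$ to the It\^o forms of \eqref{ce}--\eqref{me} (for $|\beta|\le 3$ on the $b$-equation and $|\beta|\le 2$ on the $q$-equation), then It\^o's formula to $\Vert\partial^\beta b\Vert_2^2$ and $\Vert\partial^\beta q\Vert_2^2$, paralleling Section \ref{subsub:uniEst}. The top-order transport $\bu\cdot\nabla\partial^\beta g$ vanishes after integration by parts by incompressibility, while the Moser commutators (II)--(III) contribute drift terms of the form $\Vert\nabla\bu\Vert_\infty\Vert b\Vert_{3,2}^2$, $\Vert\nabla b\Vert_\infty\Vert\bu\Vert_{3,2}\Vert b\Vert_{3,2}$, their counterparts in $q$, and a harmless bathymetry contribution from $\bu_h\cdot\nabla b$. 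The It\^o correction and quadratic variation of the stochastic transport are absorbed via Lemma \ref{rem:xiAppendix} into contributions of order $\Vert b\Vert_{3,2}^2+\Vert q\Vert_{2,2}^2$. Using $\Vert\bu\Vert_{3,2}\lesssim 1+\Vert q\Vert_{2,2}$ together with the Beale--Kato--Majda logarithmic bound
\[
\Vert\nabla\bu\Vert_\infty\lesssim 1+\Vert q\Vert_\infty\bigl(1+\log(e+\Vert q\Vert_{2,2})\bigr),
\]
which follows from $\bu=\nabla^\perp(\Delta-1)^{-1}(q-f)$ and the classical 2D BKM estimate, I arrive at a stochastic differential inequality
\[
dY(t)\le \Psi(t)\,Y(t)\log Y(t)\,dt+dM_t,\qquad Y(t):=e+\Vert b(t)\Vert_{3,2}^2+\Vert q(t)\Vert_{2,2}^2,
\]
with $\Psi(t):=C(1+\Vert\nabla b(t)\Vert_\infty+\Vert q(t)\Vert_\infty)$ and $M_t$ a local martingale whose bracket is dominated by $\int_0^t Y(s)^2\,ds$.

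Setting $Z:=\log Y$ converts this into a linear stochastic Gr\"onwall inequality for $Z$ (the It\^o correction is bounded since $d\langle M\rangle/Y^2$ is controlled), and localizing on $[0,t\wedge\tau_M^1\wedge\tau_N^2]$ with an auxiliary stopping time to tame $M_t$ via Burkholder--Davis--Gundy yields, $\mathbb{P}$-a.s.,
\[
Y(t\wedge\tau_M^1)\le Y(0)^{\exp\bigl(C\int_0^{t\wedge\tau_M^1}\Psi(s)\,ds\bigr)}.
\]
Since $\int_0^{\tau_M^1}\Psi(s)\,ds\le CT+CM<\infty$ by definition of $\tau_M^1$, the right-hand side is a.s.\ finite, so for each $M$ and $\mathbb{P}$-a.e.\ $\omega$ there is an $N$ with $\tau_N^2\geq\tau_M^1$; letting $M\to\infty$ gives $\tau^1\le\tau^2$, and combined with the easy direction this closes the proof.

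The principal obstacle I anticipate is the rigorous execution of the stochastic log-Gr\"onwall step: removing the martingale localization for the logarithmic version of Gr\"onwall requires a careful chaining of stopping times and an exponential supermartingale estimate, rather than the expectation-based argument used in Section \ref{sec:pathSolTrunc}. A secondary technicality is verifying the logarithmic estimate for $\bu=\nabla^\perp(\Delta-1)^{-1}(q-f)$ where the Helmholtz operator replaces the Laplacian of classical Euler; the additional $-1$ only contributes low-frequency decay, so the adaptation of the BKM estimate is essentially routine but must be checked on the underlying domain used in Theorem \ref{thm:bkmTQG}.
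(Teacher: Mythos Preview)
Your strategy coincides with the paper's: the easy inequalities $\tau^2\le\tau^1$ and $\tau\le\tau^1$ come from Sobolev embedding, and the substantive direction is a logarithmic energy estimate built from the same commutator bounds, Lemma \ref{rem:xiAppendix}, and the log-Sobolev inequality \eqref{velovortiEst} for $\nabla\bu$ in terms of $\Vert q\Vert_\infty$ and $\log(e+\Vert q\Vert_{2,2})$. Your identification of the two technical points (the stochastic log-Gr\"onwall step and the BKM estimate for the Helmholtz Green's function) is accurate; the paper addresses the latter exactly as you anticipate, by writing the periodic Green's function as a cut-off of the free-space one plus a smooth correction.

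The gap is in your handling of the martingale. The displayed pathwise bound $Y(t\wedge\tau_M^1)\le Y(0)^{\exp(C\int_0^{t\wedge\tau_M^1}\Psi)}$ cannot be obtained from Burkholder--Davis--Gundy: BDG controls \emph{expectations} of suprema of stochastic integrals, not their pathwise values, so ``taming $M_t$ via BDG'' does not produce an almost-sure inequality with the martingale removed. The paper avoids this by working in expectation throughout. After the substitution $Z=\log Y$ and the integrating factor $e^{-\int_0^tG}$ with $G\sim\Psi$, one has
\[
e^{-\int_0^tG}\,Z(t)\le Z(0)-\int_0^t e^{-\int_0^sG}\,d\mathcal{M}(s),
\]
and the key observation is that the \emph{bracket} of $\mathcal{M}$ is uniformly bounded (since each summand $\langle(\bm{\xi}_i\cdot\nabla)b,b\rangle_{3,2}/Y$ is bounded by a constant). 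BDG then gives $\mathbb{E}\sup_{[0,\tau_M^1\wedge T_*]}|\int e^{-\int G}d\mathcal{M}|\lesssim\sqrt{T_*}$, whence $\mathbb{E}\sup_{[0,\tau_M^1\wedge T_*]}Z<\infty$. This expectation bound suffices: it implies $\mathbb{P}(\sup_{[0,\tau_M^1\wedge T_*]}Y<\infty)=1$, and the set-theoretic inclusion
\[
\{\sup_{[0,\tau_M^1\wedge T_*]}Y<\infty\}=\bigcup_N\{\sup_{[0,\tau_M^1\wedge T_*]}Y<N\}\subset\bigcup_N\{\tau_M^1\wedge T_*<\tau_N^2\}\subset\{\tau_M^1\wedge T_*<\tau^2\}
\]
gives $\tau_M^1\wedge T_*<\tau^2$ a.s., from which $\tau^1\le\tau^2$ follows by intersecting over $M$ and $T_*$. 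This is simpler and more robust than the exponential-supermartingale route you allude to, and it is the step you should replace in your argument.
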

To prove Theorem \ref{thm:bkmTQG}, we require some preparations. We begin by constructing a suitable Green's function for the elliptic equation at the end of \eqref{constrt}. This in turn will help us derive a log-Sobolev estimate for the Lipschitz norm of the  velocity which is the key to solving Theorem \ref{thm:bkmTQG}.

\subsection{Log-Sobolev estimate for velocity gradient}
In the following, we let 
 $\nabla_0 :=(\partial_{x_1},\partial_{x_2}, 0)^T$ and $\nabla_0^\perp :=(-\partial_{x_2},\partial_{x_1}, 0)$ be the three-dimensional extensions of the  two-dimensional differential operators $\nabla=(\partial_{x_1},\partial_{x_2})^T$ and $\nabla^\perp :=(-\partial_{x_2},\partial_{x_1})$  by zero respectively. Our goal is to find an estimate for $\bu$ that solves
 \begin{align*}
 \bu = \nabla^\perp \psi, \qquad (\Delta-1)\psi= w, \qquad w=q-f
 \end{align*}
 where $h\in W^{2,2}(\mathbb{T}^2)$ is given. In particular, inspired by \cite{beale1984remarks}, we aim to show that
 \begin{equation}
\begin{aligned}
\label{velovortiEst}
\Vert \nabla \mathbf{u} \Vert_\infty
\lesssim
1
+
(1+2\ln^+(\Vert w \Vert_{2,2}))
\Vert w 
\Vert_{\infty}
\end{aligned}
\end{equation}
where $\ln^+ a = \ln\, a$ if $a\geq1$ and $\ln^+a=0$ otherwise.\\[2mm]
\\{
\noindent We start with $G^{per}$ defined as 
\begin{align*}
    G^{per}(x_1,x_2)=-\sum_{(k_1,k_2)\in \mathbb{Z}^2}\frac{e^{i(k_1x_1+k_2x_2)}}{k_1^2+k_2^2+1}, \qquad (x_1,x_2) \in \mathbb{T}^2.
\end{align*}
Observe that $G^{per}(0,0)=-\infty$ and that $G^{per}\in L^2({\mathbb{T}^2})$ as
\[
\|G^{per}\|_2^2=
\sum_{(k_1,k_2)\in \mathbb{Z}^2}\frac{1}{(k_1^2+k_2^2+1)^2}<\infty.
\]
One can check that 
\begin{align*}
    (\Delta-1)G^{per}(\mathbf{x})=\delta_0.
\end{align*}
More precisely
\begin{align*}
    \int_{\mathbb{T}^2} G^{per}(\mathbf{x})(\Delta -1)w(\mathbf{x})\dx =w(0)
\end{align*}
for any periodic function $w\in W^{2,2}(\mathbb{T}^2)$. In particular
\begin{align}
\label{star}
    \int_{\mathbb{T}^2} G^{per}(\mathbf{x})(\Delta -1)e^{i(l_1x_1 + l_2x_2)} \dd x_1 \dd x_2 = 
    -(l_1^2+l_2^2+1)\int_{\mathbb{T}^2} G^{per}(\mathbf{x})e^{i(l_1x_1 + l_2x_2)} \dd x_1 \dd x_2 
    =1
\end{align}
for any $(l_1,l_2) \in \mathbb{Z}^2$.
Note that \eqref{star} uniquely identifies $G^{per}(\mathbf{x})$. That is, if $\overline{G}(\mathbf{x})$ is periodic, in $L^2(\mathbb{T}^2)$ and
\begin{align}
    \int_{\mathbb{T}^2} \overline{G}(\mathbf{x})(\Delta -1)e^{i(l_1x_1 + l_2x_2)} \dd x_1 \dd x_2 = 1.
\end{align}
Then $G^{per}(\mathbf{x}) = \overline{G}(\mathbf{x})$, $\mathbf{x}\in \mathbb{T}^2$. That is because $G^{per}-\overline{G}\in L^2(\mathbb{T}^2) $ will be perpendicular on all elements of the basis of $L^2(\mathbb{T}^2)$, and therefore it must be 0.   \\[2mm]
Next let
\begin{align*}
    G^{free}(\mathbf{x}) = -\frac{1}{4\pi}\int_{\mathbb{R}}\frac{e^{-\sqrt{\vert \mathbf{x}\vert^2+ r^2}}}{\sqrt{\vert \mathbf{x}\vert^2 +r^2}} \dd r \qquad (x_1,x_2) \in \mathbb{R}^2.
\end{align*}
Observe that $G^{free}(0,0)=-\infty$ and that $G^{free}\in L^2_{loc}({\mathbb{R}^2})$ as For $\mathbf{x} \neq (0,0)$, $r=\vert \mathbf{x}\vert \overline{r}$,
\begin{align*}
    |G^{free}(\mathbf{x})| = \frac{1}{4\pi}\left|\int_{\mathbb{R}}\frac{e^{-\vert \mathbf{x}\vert\sqrt{\overline{r}^2 +1}}}{\sqrt{\overline{r}^2+1}} \dd \overline{r}\right|\lesssim \frac{1}{\sqrt{|x|}},
\end{align*}
since $\sup_{y\ge 0} \sqrt{y}e^{-y}<\infty$. One can check that 
\begin{align*}
    (\Delta-1)G^{free}(\mathbf{x})=\delta_0.
\end{align*}
More precisely, this means that 
\begin{align*}
    \int_{\mathbb{R}^2} G^{per}(\mathbf{x})(\Delta -1)w(\mathbf{x})\dx =w(0)
\end{align*}
for any smooth function $w$ with compact 
support. Note the different way in which we interpret the fundamental solution of the Helmholtz equation on the torus as opposed of that on whole space.    

Consider $\xi=1$ in a neighbourhood of $(0,0)$ and $\xi=0$ in a neighbourhood of the boundary on $\mathbb{T}^2=[-\frac{1}{2}, \frac{1}{2}]\times [-\frac{1}{2}, \frac{1}{2}]$. Say $\xi(x_1,x_2)=\eta(|x_1|) \eta(|x_2|)$ where $\eta:[0,\infty)\rightarrow [0,1]$ is a smooth decreasing function such that
\begin{align*}
\eta(x)
=
\begin{cases}
1 & : \vert x\vert\le \frac{1}{6}\\
0 & : \vert x\vert \ge\frac{1}{3}
\end{cases}.
\end{align*}
Then $\xi G^{free}$ is periodic and $\xi G^{per}\in L^2({\mathbb{T}^2})$. Moreover, we have that 
\begin{equation}\label{star2}
\begin{aligned}
(\Delta-1)(\xi(\mathbf{x})G^{free}(\mathbf{x})) = (\Delta \xi(\mathbf{x}))G^{free}(\mathbf{x}) +2\nabla \xi(\mathbf{x}) \cdot \nabla G^{free}(\mathbf{x}) + \xi(\mathbf{x})(\Delta-1) G^{free}(\mathbf{x}),
\end{aligned}
\end{equation}
where (\ref{star2}) is understood in the sense of (\ref{star}). For smooth $w$, the last term in (\ref{star2}) can be explicitly computed as
\begin{align*}
    \xi(\mathbf{x})(\Delta-1) G^{free}(\mathbf{x}) =\delta_{0}
\end{align*}
because
\begin{align*}
    \int_{\mathbb{T}^2}\xi(\mathbf{x})(\Delta-1) G^{free}(\mathbf{x})w(\mathbf{x})\dx
    &=
    \int_{\mathbb{T}^2}(\Delta-1) G^{free}(\mathbf{x})\xi(\mathbf{x})w(\mathbf{x})\dx
    \\&=
    \int_{\mathbb{R}^2}(\Delta-1) G^{free}(\mathbf{x})\overline{w}(\mathbf{x})\dx
    \\&=\overline{w}(0)= \xi(0)w(0)=w(0),
\end{align*}
where
\begin{align*}
\overline{w}(\mathbf{x})
=
\begin{cases}
\xi(\mathbf{x})w(\mathbf{x}) &  \mathbf{x} \in[-\frac{1}{2},\frac{1}{2}] \times [-\frac{1}{2},\frac{1}{2}],\\
0 &  \mathbf{x} \in \mathbb{R}^2 \setminus [-\frac{1}{2},\frac{1}{2}] \times [-\frac{1}{2},\frac{1}{2}].
\end{cases}
\end{align*}
Define next $\varphi:\mathbb{T}^2\rightarrow \mathbb{R}$. 
\begin{align*}
    \varphi(\mathbf{x}) = (\Delta \xi(\mathbf{x}))G^{free}(\mathbf{x}) +2\nabla \xi(\mathbf{x}) \cdot \nabla G^{free}(\mathbf{x}), \qquad 
    x\in\mathbb{T}^2.
\end{align*}
Then $\varphi$ is periodic, smooth, bounded and equal to zero in the neighbourhood of the boundary  $[-\frac{1}{2},\frac{1}{2}]\times [-\frac{1}{2},\frac{1}{2}]$ as well as on
$[-\frac{1}{4},\frac{1}{4}] \times [-\frac{1}{4},\frac{1}{4}]$. 
Let $\overline{\varphi}$ be the solution of
\begin{align*}
    (\Delta-1)\overline{\varphi}(\mathbf{x}) = - \varphi(\mathbf{x}),
\end{align*}
where the above equation is interpreted as a PDE on the torus. Then $\overline{\varphi}$ is also smooth periodic function on the torus. Define
\begin{align*}
    \overline{G}(\mathbf{x}) = \xi(\mathbf{x}) G^{free}(\mathbf{x}) + \overline{\varphi}(\mathbf{x}).
\end{align*}
Then, again in the sense of (\ref{star}), we have that 
\begin{align*}
    (\Delta-1)\overline{G}(\mathbf{x}) &=(\Delta-1)(\xi(\mathbf{x})G^{free}(\mathbf{x})) - (\Delta -1)\overline{\varphi}(\mathbf{x})
    \\&= \varphi(\mathbf{x}) +\delta_{0}(\mathbf{x}) -\varphi(\mathbf{x})= \delta_{0},
\end{align*}
and therefore, indeed, we have that
\begin{align}
    G^{per}(\mathbf{x})= \xi(\mathbf{x}) G^{free}(\mathbf{x}) + \overline{\varphi}(\mathbf{x}).
\end{align}
In other words $G^{per}$ is the sum of a  truncation of $G^{free}$ and a smooth function.
}\\
Combining this information with  \cite[Proposition 1]{crisan2022breakdown} yields \eqref{velovortiEst}.

\subsection{Proof of blowup}
In order to prove Theorem \ref{thm:bkmTQG}, 
we first need some preliminary estimates for $(b,q)$ subject to a dataset
$((\bm{\xi}_i)_{i\in \mathbb{N}}, \bu_h, f, b_0, q_0)$ satisfying 
\begin{equation}
\begin{aligned}
\label{dataTQGBlowup2}
&\bm{\xi}_i\in W^{4,\infty}_{\mathrm{div}}(\mathbb{T}^2),
\qquad \sum_{i\in \mathbb{N}} \Vert \bm{\xi}_i \Vert_{4,\infty}<\infty,\qquad\bu_h \in W^{3,2}_{\mathrm{div}}(\mathbb{T}^2), \qquad  f\in W^{2,2}(\mathbb{T}^2),
\\
(b_0, &q_0) \in L^\infty\big(\Omega; W^{3,2}(\mathbb{T}^2) \times W^{2,2}(\mathbb{T}^2)\big) \text{ is a pair of } \mathcal{F}_0\text{-measurable random variables.}
\end{aligned}
\end{equation}
rather than \eqref{dataTQGBlowup}. Once we derive the estimate for \eqref{dataTQGBlowup2},  we can conclude that the same estimate hold for \eqref{dataTQGBlowup} by employing the truncation argument in \cite[Section 5.3.3]{breit2018stoch}.
\begin{lemma}
\label{lem:bkmTQG}
Let $(b,q, (\tau_R)_{R\in \mathbb{N}}, \tau)$ be a unique maximal strong pathwise solution of \eqref{ce}--\eqref{constrt} with a dataset
$((\bm{\xi}_i)_{i\in \mathbb{N}}, \bu_h, f, b_0, q_0)$ satisfying \eqref{dataTQGBlowup2}. For any fixed $M \in \mathbb{N}$,
let $\tau_M^1$ be as
defined Theorem \ref{thm:bkmTQG}.
Then for a deterministic $T_*>0$,
\begin{equation}
\begin{aligned}
\mathbb{E} \sup_{t\in[0,\tau_M^1 \wedge T_*]}
&
\ln[\mathrm{e} + \Vert  b(t)\Vert_{3,2}^2 + \Vert  q(t)\Vert_{2,2}^2]
\lesssim
\mathbb{E}\big(2+  \Vert  b_0\Vert_{3,2}^2 + \Vert  q_0\Vert_{2,2}^2\big)
+
 \sqrt{T_*}
\end{aligned}
\end{equation}
with a constant depending only on $(\bm{\xi}_i)_{i\in \mathbb{N}},\bu_h,f,M$
\end{lemma}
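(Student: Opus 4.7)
My plan is to upgrade the a priori estimate of Subsection~\ref{subsub:uniEst} to the uncut solution, pass to a logarithmic scale, and then close a Gr\"onwall argument that exploits the stopping time $\tau_M^1$. First, I would re-do the It\^o energy calculation of Subsection~\ref{subsub:uniEst} on the actual (non-truncated) solution $(b,q)$, retaining the genuine size of the drift in place of the cut-off $\theta_R$. Setting $X(t):=\Vert b(t)\Vert_{3,2}^2+\Vert q(t)\Vert_{2,2}^2$, the same commutator, incompressibility and Lemma~\ref{rem:xiAppendix} arguments used for the Galerkin approximation produce
\[
dX \le c\,\big(1 + \Vert \nabla b\Vert_\infty + \Vert \nabla \bu\Vert_\infty + \Vert q\Vert_\infty\big)\,X\,dt + dM_t,
\]
for a continuous local martingale $M$ whose quadratic variation satisfies $d[M,M]_t \lesssim X^{2}\,dt$, using $\sum_i \Vert \bm{\xi}_i\Vert_{4,\infty}<\infty$ together with the control $\Vert \bu\Vert_{3,2}\lesssim 1+\Vert q\Vert_{2,2}$ from Lemma~\ref{lem:constrt0}.

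Next, I apply It\^o's formula to $Y(t):=\ln(\mathrm{e}+X(t))$. Using $X/(\mathrm{e}+X)\le 1$ together with the log--Sobolev bound~\eqref{velovortiEst} and the elementary inequality $\ln^{+}\Vert q\Vert_{2,2}\le \tfrac12 Y$, the drift of $Y$ becomes
\[
dY \le c\big[1 + \Vert \nabla b\Vert_\infty + (1+Y)\Vert q\Vert_\infty\big]\,dt + dN_t,
\]
where $dN_t:=dM_t/(\mathrm{e}+X)$ has tame quadratic variation $d[N,N]_t\lesssim dt$. The log--Sobolev step is the crucial manoeuvre: it turns the otherwise uncontrolled $\Vert \nabla\bu\Vert_\infty$ into a contribution that is \emph{linear} in $Y$ and proportional to the BKM integrand $\Vert q\Vert_\infty$.

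With the random integrating factor $\Phi(t):=\exp\big(-c\int_0^{t}\Vert q\Vert_\infty\,ds\big)$ and $\phi:=1+Y$, the inequality above gives
\[
d(\Phi\phi) \le \Phi\,c\,\big(1+\Vert \nabla b\Vert_\infty\big)\,dt + \Phi\,dN_t.
\]
The definition of $\tau_M^1$ guarantees $\int_{0}^{\tau_M^1\wedge T_*}\big(\Vert \nabla b\Vert_\infty+\Vert q\Vert_\infty\big)\,ds\le M$, so on this interval $\Phi\ge \mathrm{e}^{-cM}$ and $\int \Vert \nabla b\Vert_\infty\,ds\le M$. Integrating, taking the supremum over $t\in[0,\tau_M^1\wedge T_*]$, taking expectation, and bounding the martingale contribution $\mathbb{E}\sup_{t}\big\vert\int_{0}^{t}\Phi\,dN_s\big\vert$ by Burkholder--Davis--Gundy against $d[N,N]_t\lesssim dt$ yields a term of order $\sqrt{T_*}$. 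Combined with $\mathbb{E}\,\phi(0)\le \mathbb{E}(2+X(0))$, which follows from the pointwise bound $\ln(\mathrm{e}+a)\le 1+a$, this delivers the advertised estimate.

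The main technical obstacle is organizing the It\^o expansion and the log--Sobolev bound so that the \emph{product} $(1+Y)\Vert q\Vert_\infty$ appears in the drift of $Y$ rather than any higher stand-alone power of $Y$ or of $\Vert q\Vert_\infty$; without this delicate pairing one would not be able to use an integrating factor argument that closes against the linear-in-time $L^{1}$ constraint encoded in $\tau_M^1$.
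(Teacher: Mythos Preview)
Your approach is essentially the same as the paper's: It\^o on $X=\Vert b\Vert_{3,2}^{2}+\Vert q\Vert_{2,2}^{2}$, pass to $Y=\ln(\mathrm{e}+X)$, invoke the log--Sobolev bound~\eqref{velovortiEst} to eliminate $\Vert\nabla\bu\Vert_{\infty}$, then run an integrating-factor Gr\"onwall argument and close the martingale with BDG. The only organisational difference is that the paper majorises the entire drift of $Y$ by $cY\,G(t)$ with $G(t)=c(1+\Vert\nabla b\Vert_{\infty}+\Vert q\Vert_{\infty})$ and uses $\exp(-\int G)$ as the single integrating factor, whereas you keep $\Vert\nabla b\Vert_{\infty}$ as a forcing term and put only $\Vert q\Vert_{\infty}$ in $\Phi$.

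One small oversight: your displayed inequality $dY\le c[1+\Vert\nabla b\Vert_{\infty}+(1+Y)\Vert q\Vert_{\infty}]\,dt+dN_{t}$ drops a term. Since $w=q-f$, the log--Sobolev estimate yields $\Vert\nabla\bu\Vert_{\infty}\lesssim 1+(1+Y)(\Vert q\Vert_{\infty}+\Vert f\Vert_{\infty})$, and the $\Vert f\Vert_{\infty}$ piece produces a standalone $(1+Y)$ contribution in the drift of $Y$. This forces you to put at least a constant into the integrating factor, i.e.\ take $\Phi=\exp\big(-c\int(1+\Vert q\Vert_{\infty})\big)$ (or, as the paper does, the full $G$), which is still controlled on $[0,\tau_{M}^{1}\wedge T_{*}]$. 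With that small adjustment your argument goes through and matches the paper's.
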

\begin{proof}
In the following, we assume that all functions are sufficiently regular. To make the analysis rigorous, one can perform the calculations for the approximating sequence and pass to the limit just as was done in Section \ref{sec:pathSolTrunc}. We don't do that to avoid unnecessary repetitions.

First of all, by inspecting the analysis in Section \ref{subsub:uniEst}, we can conclude that by
applying It\^o's formula to the mapping $t\mapsto \Vert b(t)\Vert_{3,2}^2$ with $\vert \beta \vert\leq 3$, we obtain
\begin{equation}
\begin{aligned}
\dd \Vert  b(t)\Vert_{3,2}^2 
&
= 
\sum_{\vert \beta \vert \leq 3}
\bigg(
2 \int_{\mathbb{T}^2} S_1 \partial^\beta  b \dx\dt 
-  
2 \int_{\mathbb{T}^2}\partial^\beta[(\bm{\xi}_i\cdot\nabla  ) b ]
\partial^\beta  b \dx\dd W^i
\\&
-
\big\Vert \partial^\beta  [ (\bm{\xi}_i\cdot\nabla ) b] \big\Vert_2^2 \dt
-
\int_{\mathbb{T}^2}  \partial^\beta   \big[(\bm{\xi}_i\cdot\nabla)[ (\bm{\xi}_i\cdot\nabla ) b]\big]\, \partial^\beta  b \dx\dt \bigg)
\end{aligned}
\end{equation}
$\mathbb{P}$-a.s for all $t\in[0,T]$ where
\begin{align*}
S_1&:= ( \bu \cdot \partial^\beta  \nabla) b
-
\partial^\beta((\bu \cdot \nabla)  b )
\end{align*}
is such that
\begin{align}
\label{estS1}
\Vert S_1 \Vert_2  &\lesssim
 \Vert \nabla \bu \Vert_{\infty}\Vert b \Vert_{3,2}
+
\Vert \nabla b \Vert_{\infty}(1+ \Vert q \Vert_{2,2}).
\end{align}
Therefore, in combination with Lemma \ref{rem:xiAppendix}, we obtain
\begin{equation}
\begin{aligned}
\label{b32Blowup}
\dd \Vert  b(t)\Vert_{3,2}^2 
&
+2
\big\langle
(\bm{\xi}_i\cdot\nabla  ) b\, ,\,b
\big\rangle_{3,2}\dd W^i
\lesssim
(
 \Vert \nabla \bu \Vert_{\infty}
+
\Vert \nabla b \Vert_{\infty})\big(\mathrm{e}+\Vert b \Vert_{3,2}^2+ \Vert q \Vert_{2,2}^2\big)\dt.
\end{aligned}
\end{equation}
Similarly, we obtain
\begin{equation}
\begin{aligned}
\dd
 \Vert  q(t  )\Vert_{2,2}^2 
=&\sum_{\vert \beta \vert \leq 2} 
\bigg(-2 \int_{\mathbb{T}^2}[ (\bu \cdot \nabla \partial^\beta ) b  
] \partial^\beta  q \dx\dt  
- 
2\int_{\mathbb{T}^2}[ (\bu_h \cdot \nabla \partial^\beta )b  
] \partial^\beta q \dx\dt 
\\&+
2\int_{\mathbb{T}^2}(S_2 +S_3+S_4) \partial^\beta  q \dx\dt 
-  
2\int_{\mathbb{T}^2}\partial^\beta [(\bm{\xi}_i\cdot\nabla  ) (q - b) ]
\partial^\beta  q \dx\dd W^i
\\&
-
\int_{\mathbb{T}^2}\partial^\beta
\big[(\bm{\xi}_i\cdot\nabla)[ (\bm{\xi}_i\cdot\nabla)(q - 2b)]\big] \partial^\beta  q \dx\dt
  -
 \big\Vert \partial^\beta  [ (\bm{\xi}_i\cdot\nabla)(q - b)]\big\Vert_2^2 \dt   \bigg)
\end{aligned}
\end{equation}
where
\begin{align*}
S_2&:= -(\bu \cdot \partial^\beta  \nabla) b
+
\partial^\beta (\bu \cdot \nabla  b ),
\\
S_3&:= (\bu \cdot \partial^\beta  \nabla) q
-
\partial^\beta (\bu \cdot \nabla  q ) ,
\\
S_4&:= - (\bu_h \cdot \partial^\beta  \nabla) b
+
\partial^\beta (\bu_h \cdot \nabla  b )
\end{align*}
are such that
\begin{align}
\Vert S_2 \Vert_2 &\lesssim
\Vert \nabla \bu \Vert_{\infty}\Vert b \Vert_{3,2}
+
\Vert \nabla b \Vert_{\infty} 
 (1+\Vert q \Vert_{2,2}),
\\
\Vert S_3 \Vert_2 &\lesssim
\Vert \nabla \bu \Vert_{\infty}\Vert q \Vert_{2,2}
+
\Vert  q \Vert_{\infty} 
( 1+\Vert  q \Vert_{2,2}),
\\
\Vert S_4 \Vert_2 &\lesssim
\Vert b \Vert_{3,2}
+
\Vert \nabla b \Vert_{\infty} .
\end{align}
Additionally, the following estimates  holds true
\begin{align}
\label{poincare}
\Big\vert \sum_{\vert \beta \vert \leq 2}\big\langle ( \bu \cdot \nabla\partial^\beta b  ) \, ,\,\partial^\beta q  \big\rangle \Big\vert
&\lesssim
\Vert \bu  \Vert_\infty \Vert b \Vert_{3,2}^2
+
\Vert \bu  \Vert_\infty 
\Vert  q \Vert_{2,2}^2
\lesssim
\Vert \nabla\bu  \Vert_\infty \Vert b \Vert_{3,2}^2
+
\Vert \nabla\bu  \Vert_\infty 
\Vert  q \Vert_{2,2}^2,
\\
\Big\vert \sum_{\vert \beta \vert \leq 2}\big\langle ( \bu_h\cdot \nabla\partial^\beta b  ) \, ,\,\partial^\beta  q  \big\rangle \Big\vert
&\lesssim
\Vert b \Vert_{3,2}^2 
+ \Vert  q \Vert_{2,2}^2
\end{align}
since $\bu_h \in W^{3,2}(\mathbb{T}^2)$. Notice that we have used Poincar\'e's inequality in \eqref{poincare}, recall Remark \ref{rem:meanzero}.
\\
Subsequently, by employing  Lemma \ref{rem:xiAppendix}, we obtain
\begin{equation}
\begin{aligned}
\label{q22Blowup}
\dd \Vert  q(t)\Vert_{2,2}^2 
+2
\big\langle
(\bm{\xi}_i\cdot\nabla  ) (q-b)\, ,\,q
\big\rangle_{2,2}\dd W^i
&\lesssim
(1+
 \Vert \nabla \bu \Vert_{\infty}
+
\Vert \nabla b \Vert_{\infty}
+
\Vert q \Vert_{\infty}
)
\\&
\times
\big(\mathrm{e}+\Vert b \Vert_{3,2}^2+ \Vert q \Vert_{2,2}^2\big)\dt.
\end{aligned}
\end{equation}
If we now define
\begin{align*}
&g(t):= \mathrm{e} + \Vert  b(t)\Vert_{3,2}^2 + \Vert  q(t)\Vert_{2,2}^2,
\\
&\mathcal{M}(t):=2\int_0^t
\frac{1}{g(s)}
\Big[
\big\langle
(\bm{\xi}_i\cdot\nabla  ) b\, ,\,b
\big\rangle_{3,2}
+
\big\langle
(\bm{\xi}_i\cdot\nabla  ) (q-b)\, ,\,q
\big\rangle_{2,2}
\Big]
\dd W^i,
\end{align*}
then it follows from \eqref{b32Blowup} and \eqref{q22Blowup} that
\begin{equation}
\begin{aligned}
\dd \ln[g(t)]\leq c\,(1+
 \Vert \nabla \bu \Vert_{\infty}
+
\Vert \nabla b \Vert_{\infty}
+
\Vert q \Vert_{\infty}
)\dt
-
\dd \mathcal{M}(t).
\end{aligned}
\end{equation}
However, from \eqref{velovortiEst} (where $w=f-q$) and the fact that $\ln[g(t)]\geq 1$, we have that  
 \begin{equation}
\begin{aligned}
1+
 \Vert \nabla \bu \Vert_{\infty}
+
\Vert \nabla b \Vert_{\infty}
+
\Vert q \Vert_{\infty}
\lesssim
\ln[g(t)] 
\big( 1
+
\Vert \nabla b \Vert_{\infty}
+
\Vert q \Vert_{\infty}
\big)
\end{aligned}
\end{equation}
and thus,
\begin{equation}
\begin{aligned}
\dd \ln[g(t)] \leq c\,\ln[g(t)] 
\big( 1
+
\Vert \nabla b \Vert_{\infty}
+
\Vert q \Vert_{\infty}
\big)\dt
-
\dd \mathcal{M}(t).
\end{aligned}
\end{equation}
If we now define
\begin{align*}
G(t):= c(1
+
\Vert \nabla b(t) \Vert_{\infty}
+
\Vert q(t) \Vert_{\infty})
\end{align*}
and use It\^o's product rule, we can conclude that
\begin{equation}
\begin{aligned}
e^{-\int_0^t G(\sigma)\ds} \ln[g(t)]
\leq 
\ln[g(0)]
-
\int_0^t e^{-\int_0^s G(\sigma)\ds}\dd \mathcal{M}(s).
\end{aligned}
\end{equation}
By the Burkholder--Davis--Gundy inequality, the first two assumptions in \eqref{dataTQGBlowup2}, and the boundedness of $e^{-\int_0^s G(\sigma)\ds}$, it follows that for  any deterministic $T_*>0$ and $M\in \mathbb{N}$,
\begin{equation}
\begin{aligned}
\mathbb{E} \sup_{t\in[0,\tau_M^1 \wedge T_*]}
\bigg\vert
\int_0^t &e^{-\int_0^s G(\sigma)\ds}\dd \mathcal{M}(s)
\bigg\vert
\\&
\lesssim
\mathbb{E}
\bigg(
\int_0^{\tau_M^1 \wedge T_*}
\bigg\vert
\frac{1}{g(s)}
\sum_{i\in \mathbb{N}}
\Big[
\big\langle
(\bm{\xi}_i\cdot\nabla  ) b\, ,\,b
\big\rangle_{3,2}
+
\big\langle
(\bm{\xi}_i\cdot\nabla  ) (q-b)\, ,\,q
\big\rangle_{2,2}
\Big]
\bigg\vert^2 \dd s \bigg)^\frac{1}{2}
\\&
\lesssim
\mathbb{E}
\bigg(
\int_0^{\tau_M^1 \wedge T_*}
\frac{1}{[g(s)]^2}
\sum_{i\in \mathbb{N}}\Vert \bm{\xi}_i
\Vert_{4,\infty}^2\big(\Vert b \Vert_{3,2}^2
+
\Vert q \Vert_{2,2}^2
\Big)^2
 \dd s \bigg)^\frac{1}{2}
 \\&
\lesssim
\mathbb{E}
\bigg(
\int_0^{\tau_M^1 \wedge T_*}
\frac{1}{[g(s)]^2}[g(s) -\mathrm{e}
]^2
 \dd s \bigg)^\frac{1}{2}
 \lesssim
 \sqrt{T_*}.
\end{aligned}
\end{equation}
Therefore,
\begin{equation}
\begin{aligned}
\mathbb{E} \sup_{t\in[0,\tau_M^1 \wedge T_*]}
\bigg\vert
&
e^{-\int_0^t G(\sigma)\ds} \ln[g(t)] 
\bigg\vert
\lesssim
\mathbb{E} \ln[g(0)]
+
 \sqrt{T_*}.
\end{aligned}
\end{equation}
On the other hand,
\begin{equation}
\begin{aligned}
e^{-c(1+M)}\mathbb{E} \sup_{t\in[0,\tau_M^1 \wedge T_*]}
\big\vert
&
 \ln[g(t)] 
\big\vert
\leq
\mathbb{E} \sup_{t\in[0,\tau_M^1 \wedge T_*]}
\bigg\vert
&
e^{-\int_0^t G(\sigma)\ds} \ln[g(t)]
\bigg\vert.
\end{aligned}
\end{equation}
Combining these two estimates with $ \ln[g(0)]\leq 1+ \Vert b_0\Vert_{3,2}^2 + \Vert q_0\Vert_{2,2}^2$ finishes the proof.
\end{proof}
We are now in a position to prove Theorem \ref{thm:bkmTQG} by adapting the arguments in \cite[Theorem 17]{crisan2019solution} for the Euler equation to our setting.

\begin{proof}[Proof of Theorem \ref{thm:bkmTQG}]
Due to the continuity of the embedding $W^{3,2}\times W^{2,2}(\mathbb{T}^2) \hookrightarrow W^{1,\infty}\times L^{\infty}(\mathbb{T}^2)$, for any deterministic $s>0$, there exists a deterministic $c>0$ such that
\begin{align*}
\int_0^{\tau_N^2 \wedge s}\big( \Vert \nabla b(t)\Vert_{\infty} +
 \Vert q(t)\Vert_\infty \big)\dd t
 &\leq
 c(\tau_N^2 \wedge s)
\sup_{t\in [0,\tau_N^2 \wedge s]}
 (\Vert b(t)\Vert_{3,2} + \Vert q(t) \Vert_{2,2})
 \leq ([cs]+1)N
\end{align*}
where $[a]$ denotes the integer part of $a$.
\begin{figure}[h!]
\vspace{-5pt}
\centering
\includegraphics[width=0.65\textwidth]{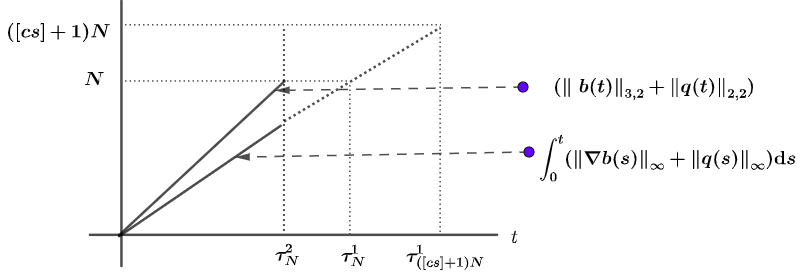}
\vspace{-12pt}
\caption{Relationship between the various stopping times where we set $cs=1$ for simplicity.}
\vspace{-5pt}
\end{figure}

\noindent Therefore, the inequalities $\tau_{N}^2 \leq \tau_{([cs]+1)N}^1 \leq \tau^1$ holds $\mathbb{P}$-a.s. from which we obtain
\begin{align}
\label{tau2tau1}
\tau^2 \leq \tau^1 \qquad \mathbb{P}\text{-a.s.}
\end{align}
Similarly, we have that for any deterministic $\tilde{s}>0$,
\begin{align*}
\int_0^{\tau_R \wedge \tilde{s}}\big( \Vert \nabla b(t)\Vert_{\infty} +
 \Vert q(t)\Vert_\infty \big)\dd t
 &\leq
 (\tau_R \wedge \tilde{s})
\sup_{t\in [0,\tau_R \wedge \tilde{s}]}
\big( \Vert \nabla b(t)\Vert_{\infty}+\Vert \nabla \bu(t)\Vert_{\infty} +
 \Vert q(t)\Vert_\infty \big)
 \leq ([\tilde{s}]+1)R
\end{align*}
and thus, $\tau_R \leq \tau_{([\tilde{s}]+1)R}^1 \leq \tau^1$ holds $\mathbb{P}$-a.s. Therefore,
\begin{align}
\label{tautau1}
\tau \leq \tau^1 \qquad \mathbb{P}\text{-a.s.}
\end{align}
It remains to show the reverse inequalities for \eqref{tau2tau1} and \eqref{tautau1} in order to obtain \eqref{tau1tau2tau}. For the reverse of \eqref{tau2tau1}, we first note from Lemma \ref{lem:bkmTQG} that
\begin{equation}
\begin{aligned}
\mathbb{P}\bigg( \sup_{t\in[0,\tau_M^1 \wedge T_*]}
&
\big( \Vert  b(t)\Vert_{3,2}^2 + \Vert  q(t)\Vert_{2,2}^2\big)
<\infty
\bigg)=1.
\end{aligned}
\end{equation}
Now since
\begin{equation}
\begin{aligned}
\bigg\{&\omega\in \Omega \, : \, \sup_{t\in[0,\tau_M^1(\omega) \wedge T_*]}
\big( \Vert  b(t,\omega)\Vert_{3,2}^2 + \Vert  q(t,\omega)\Vert_{2,2}^2\big)
<\infty \bigg\}
\\&
=
\bigcup_{N\in \mathbb{N}}
\bigg\{\omega\in \Omega \, : \, \sup_{t\in[0,\tau_M^1(\omega) \wedge T_*]}
\big( \Vert  b(t,\omega)\Vert_{3,2}^2 + \Vert  q(t,\omega)\Vert_{2,2}^2\big)
<N \bigg\}
\\&
\subset
\bigcup_{N\in \mathbb{N}}
\big\{ \omega\in \Omega \, : \, \tau_M^1(\omega)\wedge T_* < \tau_N^2(\omega)\big\}
\\&
\subset
\big\{ \omega\in \Omega \, : \, \tau_M^1(\omega)\wedge T_* < \tau^2(\omega)\big\}
\end{aligned}
\end{equation}
holds, it follows that $\tau_M^1\wedge T_* < \tau^2$ holds $\mathbb{P}$-a.s. Furthermore,
\begin{equation}
\begin{aligned}
\big\{ \omega\in \Omega \, : \, \tau^1(\omega) \leq \tau^2(\omega)\big\}
&=
\Big\{ \omega\in \Omega \, : \, \lim_{M\rightarrow \infty}\tau_M^1(\omega) \leq \tau^2(\omega)\Big\}
\\&=
\bigcap_{M\in \mathbb{N}}
\Big\{ \omega\in \Omega \, : \,\tau_M^1(\omega) \leq \tau^2(\omega)\Big\}
\\&=
\bigcap_{M\in \mathbb{N}}\bigcap_{T_*>0}
\Big\{ \omega\in \Omega \, : \,\tau_M^1(\omega) \wedge T_*\leq \tau^2(\omega)\Big\}.
\end{aligned}
\end{equation}
Since all sets in the last equation have full measure, we can conclude that
\begin{align}
\label{tau2tau1Reverse}
\tau^1 \leq \tau^2 \qquad \mathbb{P}\text{-a.s.}
\end{align}
It remains to show the reverse inequality for \eqref{tautau1}. For this, first notice that since
\begin{align*}
\frac{\mathrm{e}}{2} + \Vert \nabla b(t)\Vert_{\infty}^2 + \Vert \nabla \bu(t)\Vert_{\infty}^2 + \Vert  q(t)\Vert_{\infty}^2
\lesssim
\mathrm{e} + \Vert  b(t)\Vert_{3,2}^2 + \Vert  q(t)\Vert_{2,2}^2,
\end{align*}
it follows from Lemma \ref{lem:bkmTQG} that
for a deterministic $T_*>0$,
\begin{equation}
\begin{aligned}
\mathbb{E} \sup_{t\in[0,\tau_M^1 \wedge T_*]}
&
\big(1+ \ln[\mathrm{e} +\Vert \nabla b(t)\Vert_{\infty}^2 + \Vert \nabla \bu(t)\Vert_{\infty}^2 + \Vert  q(t)\Vert_{\infty}^2]\big)
\lesssim
\mathbb{E}\big(2+  \Vert  b_0\Vert_{3,2}^2 + \Vert  q_0\Vert_{2,2}^2\big)
+
 \sqrt{T_*}
\end{aligned}
\end{equation}
holds with a constant depending only on $(\bm{\xi}_i)_{i\in \mathbb{N}},\bu_h,f,M$
. Thus,
\begin{equation}
\begin{aligned}
\mathbb{P}\bigg( \sup_{t\in[0,\tau_M^1 \wedge T_*]}
&
\big( \Vert \nabla b(t)\Vert_{\infty}^2 + \Vert \nabla \bu(t)\Vert_{\infty}^2 + \Vert  q(t)\Vert_{\infty}^2\big)
<\infty
\bigg)=1.
\end{aligned}
\end{equation}
A similar argument leading to \eqref{tau2tau1Reverse} therefore yields
\begin{align}\label{tautau1reverse}
\tau^1 \leq \tau \qquad \mathbb{P}\text{-a.s}
\end{align}
Combing \eqref{tau2tau1}, \eqref{tautau1}, \eqref{tau2tau1Reverse} and \eqref{tautau1reverse} finishes the proof.
\end{proof}

\section{Time discretisation and numerical consistency }

\label{sec:consitency} 
In this section we investigate the application of the \emph{strong stability preserving Runge-Kutta of order 3} (SSPRK3) scheme to the stochastic TQG equations; see \cite{Hesthaven2008} for a description of the scheme applied to deterministic systems. We show that the scheme is numerically consistent. We only consider the semi-discretise case in our analysis. However, for completeness, the mixed finite element scheme we use for the spatial derivatives is included in Appendix \ref{sec: FEM}. 

First let us note that Theorem \ref{thm:main}
ensures the existence of a unique maximal strong pathwise
solution $(b,q, \tau)$ of equations  \eqref{ce}--\eqref{constrt}. In particular, the main result does not ensure 
the existence of a \emph{ global} solution, that it may be possible that 
the set $\tau<\infty$ has positive probability. This is cumbersome as that the consistency of the numerical scheme itself may be affected by the finite time blow up. To avoid this technical complication we show the consistency of the numerical scheme when applied to the \emph{truncated} equation. More precisely we will assume that $(b, q)$ is the solution of the 
system of equations 
\begin{align}
\label{ct}
 {b}(t)&= {b}(0) - \int_0^t \theta_R[( {\bu} \cdot \nabla)  {b} ]\ds -\int_0^t (\bm{\xi}_i\cdot\nabla)  {b} \circ \dd  {W}^i_\sigma,
\\ \label{ct2}
 {q}(t) &=  {q}(0) - \int_0^t\theta_R[( {\bu}\cdot \nabla)(  {q} -  {b})]\ds - \int_0^t(\bu_h \cdot \nabla)  {b} \ds  - \int_0^t(\bm{\xi}_i\cdot\nabla) ( {q}-  {b}) \circ \dd  {W}^i_\sigma
\end{align}
where $R$ is large, but fixed truncation parameter. Following 
Theorem 
\ref{prop:smoothPathwise}, the system \eqref{ct} - \eqref{ct2} has a unique
\emph{global} solution $(b,q)$ such that 
$b \in L^2 \big( \Omega;C([0,T]; W^{3,2}(\mathbb{T}^2)) \big)$ and 
$q \in L^2 \big( \Omega;C([0,T]; W^{2,2}(\mathbb{T}^2)) \big)$. The consistency proof presented below assumes that $b \in L^2 \big( \Omega;C([0,T]; W^{4,2}(\mathbb{T}^2)) \big)$ and 
$q \in L^2 \big( \Omega;C([0,T]; W^{3,2}(\mathbb{T}^2)) \big)$. This holds true provided initial conditions of the system 
are chosen from the same space (i.e., they have the same regularity as required by the consistency condition). 

\subsection{Time discretisation}
Equations \eqref{ct}--\eqref{ct2} can be expressed in a compact form
\begin{align}
\label{abstractTQGtruncated}
\dd \binom{b}{q} =
\mathcal{A}_R
\begin{pmatrix}
 b \\
 q
 \end{pmatrix}\dt
 +
\mathcal{G}_i
\begin{pmatrix}
 b \\
 q
 \end{pmatrix}\circ \dd W^i
,
\end{align}
where
\begin{align}\label{compact matrix operators}
\mathcal{A}_R
:= 
-
\begin{bmatrix}
\theta_R \bu\cdot\nabla & 0    \\[0.3em]
(\bu_h - \theta_R \bu)\cdot\nabla & \theta_R \bu\cdot \nabla
\end{bmatrix}
,
\quad 
\mathcal{G}_i
:= -
\begin{bmatrix}
\bm{\xi}_i\cdot\nabla & 0    \\[0.3em]
-\bm{\xi}_i\cdot\nabla &\bm{\xi}_i\cdot \nabla
\end{bmatrix}.
\end{align}
Furthermore, the conversion from the Stratonovich to It\^o integral $\mathcal{G}_i\mathbf{g}\circ \dd W^i \mapsto \frac{1}{2}\mathcal{G}_i^2\mathbf{g}\dt+\mathcal{G}_i\mathbf{g}\dd W^i$, $\mathbf{g}:=(b,q)^T$ yields the following equivalent It\^o form for \eqref{ct}--\eqref{ct2} 
\begin{align}
\dd b + \theta_R[(\bu \cdot \nabla) b ]\dt
-
\frac{1}{2}
(\bm{\xi}_i\cdot\nabla)(\bm{\xi}_i\cdot\nabla)b 
\dt
+(\bm{\xi}_i\cdot\nabla) b \, \dd W^i =0, \label{eq: truncated b}
\\
\dd q + \theta_R[(\bu\cdot \nabla)( q -b)]\dt -
\frac{1}{2}(\bm{\xi}_i\cdot\nabla)(\bm{\xi}_i\cdot\nabla) (q-2b)\dt 
 + (\bm{\xi}_i\cdot\nabla) (q-b)  \dd W^i= -(\bu_h \cdot \nabla) b \dt. \label{eq: truncated q}
\end{align}

The SSPRK3
time discretisation scheme applied to the truncated STQG system gives the following time stepping equations
\begin{subequations}\label{eq: ssprk3 stqg}
\begin{align}
    \bq^{(1)} & = \bq^{(n)} + \Delta t \ \mathcal{A}_R^{(n,n)} \bq^{(n)} 
    + \mathcal{G}_i \bq^{(n)} \Delta W^i \label{eq: ssprk3 stqg a}
    \\
	\bq^{(2)} 
	&=
	\frac{3}{4}\bq^{(n)} +\frac{1}{4}\left(\bq^{(1)}+\Delta t \ \mathcal{A}_R^{(1,1)}\bq^{(1)}
	+ \mathcal{G}_i\bq^{(1)} \Delta W^i
	\right) \label{eq: ssprk3 stqg b}
	\\
	\bq^{(n+1)} 
	&=
	\frac{1}{3}\bq^{(n)} + \frac{2}{3} \left(
	\bq^{(2)} + \Delta t \  \mathcal{A}_R^{(2,2)}\bq^{(2)}
	+ \mathcal{G}_i\bq^{(2)} \Delta W^i
	\right).
	\label{eq: ssprk3 stqg c}
\end{align} 
\label{eq: ssprk3}
\end{subequations} 
For the $\mathcal{A}_R$'s in \eqref{eq: ssprk3 stqg a} -- \eqref{eq: ssprk3 stqg c}, the added superscripts indicate the time step values of $\bu$ and $\theta_R$ that constitute $\mathcal{A}_R$. More specifically, we write
$\mathcal{A}_R^{(m,k)}$ to mean
\begin{align}
\mathcal{A}_R^{(m,k)}
= 
-
\begin{bmatrix}
\theta_R^{(k)} \bu^{(m)}\cdot\nabla & 0    \\[0.3em]
(\bu_h - \theta_R^{(k)} \bu^{(m)})\cdot\nabla & \theta_R^{(k)} \bu^{(m)}\cdot \nabla
\end{bmatrix},
\end{align}
where 
\begin{equation}
    \bu^{(m)} = \nabla^\perp (\Delta -1)^{-1} (q^{(m)} - f), \qquad
    \theta_R^{(k)} = \theta_R(\|\nabla b^{(k)} \|_\infty + \|\nabla \bu^{(k)} \|_\infty + \| \nabla q^{(k)} \|_\infty).
\end{equation}


We write $S_{\Delta t}$ to denote the time discretisation scheme \eqref{eq: ssprk3 stqg}, i.e. 
\begin{equation}
    \bq^{(n+1)} = S_{\Delta t} \bq^{(n)}.
\end{equation}
By substituting \eqref{eq: ssprk3 stqg a} into \eqref{eq: ssprk3 stqg b}, and then substituting the result into \eqref{eq: ssprk3 stqg c}, we can write the scheme in a one-step form
\begin{align} \label{eq: ssprk3 one-step}
S_{\Delta t} \bq^{(n)}
=
\bq^{(n)}
& +
 \frac23
 \Delta t 
 \mathcal{A}_R^{(n,2)} \bq^{(n)}
 + \frac13 \Delta t \mathcal{A}_R^{(n,n)} \bq^{(n)}
+
 \Delta W^i 
\mathcal{G}_i\bq^{(n)}
 \nonumber
 \\+ &\frac12 
 \Delta W^i \Delta W^j 
 \mathcal{G}_i\mathcal{G}_j \bq^{(n)}
 + H.O.T.
,
\end{align}
where $H.O.T.$ is short for \emph{higher order terms}.
$H.O.T.$ can be written out explicitly to show that it consists of terms up to the order of $\Delta t^7$, and contains terms that has up to three derivatives of $b$ and $q$.

\subsection{Numerical consistency}
We consider the local (one-step) truncation error $e_j(\Delta t)$ of the scheme \eqref{eq: ssprk3}, which is defined by
\begin{equation}\label{eq: local truncation error}
    e_n(\Delta t) = \begin{pmatrix}
    b \\
    q
    \end{pmatrix}(t_{n+1})
     - 
     S_\Delta\begin{pmatrix}
     b \\
     q
     \end{pmatrix}(t_n)
\end{equation}
with the norm
\begin{equation}\label{eq: local truncation error norm}
\| e_n(\Delta t) \|_\myH^2 := \|b(t_{n+1}) - b^{(n+1)}\|_\myH^2 + \|q(t_{n+1}) -q^{(n+1)}\|_\myH^2.
\end{equation}
In addition, since the STQG system is Stratonovich, we require a compatibility condition on $S_{\Delta t}$. In the following definition, we let $S^s_{\Delta t}$ denote the "purely" stochastic part of $S_{\Delta t}$, i.e. $S^s_{\Delta t}$ consists of terms of $S_{\Delta t}$ that contain \emph{only} Brownian increments.
\begin{definition}[Stratonovich compatiblity condition] For some $\alpha > 1$, 
\begin{equation}\label{eq: stratonovich compatibility}
    \expect\left[S^s_{\Delta t} \bq(t_n) \big| \mathcal{F}_{t_j}\right]
    = \frac12 \Delta t \ \mathcal {G}_i\mathcal{G}_i\bq(t_n) + O(\Delta t^\alpha).
\end{equation}
\end{definition}

\begin{definition}[Consistency]\label{def: consistency}
We say $S_{\Delta t}$ is consistent in mean square of order $\alpha > 1$ with respect to \eqref{abstractTQGtruncated} if there exists a constant $c > 0$ that does not depend on $\Delta t \in (0, T]$, and for all $\epsilon > 0$, there exists $\delta > 0 $ such that for all $0< \Delta t< \delta$ and $n \in \{1, 2, \dots, N\}$
\begin{equation}
    \expect(\| e_n(\Delta t) \|_\myH^2) < c \Delta t^\alpha
\end{equation}
and $S_{\Delta t}$ satisfies \eqref{eq: stratonovich compatibility}.
\end{definition}

\begin{proposition}[Consistency]
For the truncated STQG equations \eqref{ct} -- \eqref{ct2}, the time discretisation scheme \eqref{eq: ssprk3 stqg} is consistent in the sense of Definition \eqref{def: consistency} with $\alpha=2$. 
\end{proposition}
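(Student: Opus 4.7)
The plan is to bound the local truncation error $e_n(\Delta t)$ by comparing an It\^o--Taylor expansion of the exact solution with the one-step expansion \eqref{eq: ssprk3 one-step} of the scheme, and to exploit the extra spatial regularity $b\in L^2(\Omega;C([0,T];W^{4,2}(\mathbb{T}^2)))$, $q\in L^2(\Omega;C([0,T];W^{3,2}(\mathbb{T}^2)))$ to control every remainder term in the norm $\|\cdot\|_{\myH}$. Throughout, I will make repeated use of the fact that, because the cut-off $\theta_R$ is smooth, compactly supported and depends only on Sobolev-controlled quantities, the operators $\mathcal{A}_R,\mathcal{G}_i,\mathcal{G}_i\mathcal{G}_j$ act boundedly between the Sobolev spaces listed in \eqref{dataTQGBlowup2}, with constants controlled by $R$ and the data.

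First I would expand the exact solution. Writing the truncated STQG system \eqref{eq: truncated b}--\eqref{eq: truncated q} in its It\^o form on $[t_n,t_{n+1}]$ and applying the It\^o--Stratonovich correction,
\begin{align*}
\bq(t_{n+1})=\bq(t_n)+\int_{t_n}^{t_{n+1}}\!\bigl(\mathcal{A}_R+\tfrac12 \mathcal{G}_i\mathcal{G}_i\bigr)\bq(s)\,ds+\int_{t_n}^{t_{n+1}}\mathcal{G}_i\bq(s)\,dW^i_s.
\end{align*}
Substituting the analogous expansion for $\mathcal{G}_i\bq(s)$ into the stochastic integral and for $\mathcal{A}_R\bq(s)$ into the drift, and using the iterated-integral identity $I_{ji}+I_{ij}=\Delta W^i\Delta W^j-\delta_{ij}\Delta t$, yields
\begin{align*}
\bq(t_{n+1})=\bq(t_n)+\Delta t\,\mathcal{A}_R\bq(t_n)+\Delta W^i\,\mathcal{G}_i\bq(t_n)+\tfrac12\Delta W^i\Delta W^j\,\mathcal{G}_i\mathcal{G}_j\bq(t_n)+\mathcal{L}_n+\mathcal{R}_n,
\end{align*}
where the Stratonovich correction $\tfrac12\Delta t\mathcal{G}_i\mathcal{G}_i\bq(t_n)$ has cancelled against the $-\tfrac12\delta_{ij}\Delta t$ piece of the iterated integral, $\mathcal{L}_n=\tfrac12(\mathcal{G}_i\mathcal{G}_j-\mathcal{G}_j\mathcal{G}_i)\bq(t_n)(I_{ji}-I_{ij})$ is a L\'evy-area contribution, and $\mathcal{R}_n$ collects all further iterated integrals. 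Using the It\^o isometry together with $\mathbb{E}|I_{ji}|^2=\tfrac12\Delta t^2$ and $\sum_i\|\bm{\xi}_i\|_{4,\infty}<\infty$, one obtains $\mathbb{E}\|\mathcal{L}_n\|_{\myH}^2\lesssim \Delta t^2\,\mathbb{E}\|\bq(t_n)\|_{W^{4,2}\times W^{3,2}}^2$; the same argument, combined with the boundedness of $\mathcal{A}_R$ and $\mathcal{G}_i\mathcal{G}_j$ in the available norms, yields $\mathbb{E}\|\mathcal{R}_n\|_{\myH}^2\lesssim \Delta t^3$.

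Next I would compare this with the scheme. Inserting $\bq(t_n)$ into \eqref{eq: ssprk3 one-step} and using $\mathcal{A}_R^{(n,n)}=\mathcal{A}_R$ at $\bq(t_n)$, the only structural discrepancy with the Taylor expansion is the replacement of one factor of $\mathcal{A}_R$ by $\tfrac23\mathcal{A}_R^{(n,2)}+\tfrac13\mathcal{A}_R^{(n,n)}$. Since $\mathcal{A}_R^{(n,2)}$ differs from $\mathcal{A}_R^{(n,n)}$ only through the truncation factor $\theta_R^{(2)}-\theta_R^{(n)}$, the Lipschitz bound for $\theta_R$ combined with the a priori bound
\begin{align*}
\|\bq^{(2)}-\bq(t_n)\|_{\text{suitable}}\lesssim \Delta t+|\Delta W^i|\|\mathcal{G}_i\bq(t_n)\|+\dots
\end{align*}
(obtained from one- and two-stage expansions of the scheme) gives $\mathbb{E}|\theta_R^{(2)}-\theta_R^{(n)}|^2\lesssim \Delta t$, so that the resulting contribution $\tfrac23\Delta t(\mathcal{A}_R^{(n,2)}-\mathcal{A}_R^{(n,n)})\bq(t_n)$ is of order $\Delta t^3$ in mean square. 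The term labelled $H.O.T.$ in \eqref{eq: ssprk3 one-step} has, by construction, order at least $\Delta t^{3/2}$ in $L^2(\Omega;\myH)$; I would verify this by cataloguing its terms (products $\Delta t^{a}(\Delta W)^{b}$ with $a+b/2\geq 3/2$) and bounding each via It\^o isometry together with the uniform spatial regularity.

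Putting these estimates together yields $\mathbb{E}\|e_n(\Delta t)\|_{\myH}^2\lesssim \Delta t^2$ with a constant depending on $R$, $\bu_h$, $f$, $(\bm{\xi}_i)$ and the regularity norms of the solution, proving consistency with $\alpha=2$. For the Stratonovich compatibility \eqref{eq: stratonovich compatibility}, I would isolate the purely stochastic part $S^s_{\Delta t}\bq(t_n)$ from \eqref{eq: ssprk3 one-step}. Its leading contributions are $\Delta W^i\mathcal{G}_i\bq(t_n)$, which has zero conditional mean, and $\tfrac12\Delta W^i\Delta W^j\mathcal{G}_i\mathcal{G}_j\bq(t_n)$, whose conditional expectation is exactly $\tfrac12\Delta t\,\mathcal{G}_i\mathcal{G}_i\bq(t_n)$; all remaining purely-stochastic terms contain iterated Brownian increments of total degree at least three and contribute $O(\Delta t^{\alpha})$ with $\alpha>1$. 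The main obstacle in making this plan rigorous is the careful bookkeeping of the L\'evy-area and truncation-perturbation terms, both of which sit exactly at the threshold $\Delta t^2$ in mean square and require the infinite-noise summability $\sum_i\|\bm{\xi}_i\|_{4,\infty}<\infty$ and the extra Sobolev regularity assumed on $(b,q)$; once those are handled the remaining estimates are standard It\^o-isometry manipulations.
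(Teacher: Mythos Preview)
Your proposal is correct and leads to the same conclusion as the paper, but the organisation is genuinely different. The paper does not pass through an abstract It\^o--Taylor expansion with an explicit L\'evy-area term $\mathcal{L}_n$. Instead, it writes the local truncation error directly as a sum of differences between the It\^o-form integrals of the truncated system and the corresponding pieces of the one-step scheme \eqref{eq: ssprk3 one-step}, splits via Jensen's inequality into six groups (drift, noise, and It\^o-correction differences, separately for $b$ and $q$) plus the $H.O.T.$ term, and then for each group substitutes the equations \eqref{ct}--\eqref{ct2} and \eqref{eq: truncated u} for $b(s),q(s),\bu(s)$ to reduce the integrand to iterated integrals over $[t_n,s]$; the $O(\Delta t^2)$ bound then follows from Cauchy--Schwarz, It\^o isometry, and the global bounds of Proposition~\ref{prop:smoothPathwise}. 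In particular, what you isolate as the L\'evy-area contribution sits inside the paper's comparison of $\tfrac12\int_{t_n}^{t_{n+1}}(\bm{\xi}_i\cdot\nabla)(\bm{\xi}_i\cdot\nabla)b(s)\,ds$ with $\tfrac12\Delta W^j\Delta W^k(\bm{\xi}_j\cdot\nabla)(\bm{\xi}_k\cdot\nabla)b(t_n)$ (and the analogous $q$-term), rather than being singled out. The paper also spends most of its effort on the $\theta_R$-perturbation, showing explicitly via the Lipschitz property of $\theta_R$, Sobolev embedding $W^{k,2}\hookrightarrow L^\infty$, and the scheme's intermediate-stage expansion that $\mathbb{E}\int_{t_n}^{t_{n+1}}\|(\theta_R(s)-\tfrac23\theta_R^{(2)}-\tfrac13\theta_R^{(n)})\bu(t_n)\cdot\nabla b(t_n)\|_2^2\,ds=O(\Delta t^2)$; your treatment of this term is equivalent but more compressed. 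Your It\^o--Taylor route has the advantage of being systematic and of making transparent why $\alpha=2$ is the sharp exponent (the L\'evy area sits exactly at $\Delta t^2$), while the paper's term-by-term substitution avoids some bookkeeping about which Sobolev norm the H\"older-in-time estimates must be taken in when iterating the expansion. Both rely on the same ingredients: the cut-off, the global a~priori bounds for the truncated system, the summability $\sum_i\|\bm{\xi}_i\|_{4,\infty}<\infty$, and the extra regularity $b\in W^{4,2}$, $q\in W^{3,2}$ for the triple-derivative terms.
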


\begin{proof}
By combining the equations \eqref{abstractTQGtruncated}, \eqref{eq: ssprk3 one-step} and \eqref{eq: local truncation error norm}, and applying Jensen's inequality we get
\begin{equation}\label{eq: local truncation error bound}
\begin{aligned}
    &\expect\left[\|e_n(\Delta t)\|_\myH^2\right] 
    = \expect\left[\|b(t_{n+1}) - b^{n+1}\|_\myH^2\right] + \expect\left[ \|q(t_{n+1}) -q^{n+1}\|_\myH^2\right] 
    \\
    & \leq 4 \expect\left[ \int_{t_n}^{t_{n+1}} \left\|  \theta_R(s) \bu(s) \cdot \nabla b(s) - \big(\frac23 \theta_R^{(2)}
    +\frac13\theta_R^{(n)}\big)\bu(t_n) \cdot \nabla b(t_n)
    \right \|_\myH^2 \  \dd s \right] 
    \\
    & \quad + 4 \expect\left[ \int_{t_n}^{t_{n+1}} \left\| -\bu_h\cdot \nabla b(s) + \theta_R(s)
    \bu(s)\cdot \nabla (b(s)  -   q(s)) \right.\right.\\
    &\qquad \qquad\qquad \left.\left.
     + \bu_h\cdot \nabla b(t_n) - \big(\frac23 \theta_R^{(2)}
    +\frac13\theta_R^{(n)}\big)\bu(t_n)\cdot \nabla (b(t_n)
     - q(t_n))
    \right \|_\myH^2 \  \dd s\right]
    \\
    & \quad
    + 4 \expect\left[ \int_{t_n}^{t_{n+1}} \left\| 
    \xi_i \cdot \nabla 
    (b(t_n) - b(s) )
    \right \|_\myH^2 \  \dd W_s^i\right]
    \\
    & \quad + 4 \expect\left[ \int_{t_n}^{t_{n+1}} \left\| \xi_i \cdot \nabla (b(s) - b(t_n))  + \xi_i \cdot \nabla  (q(t_n) - q(s)) \dd W_s^i \right\|_\myH^2 \right]
    \\
    & \quad 
    + 4\expect
    \left[ 
    \left\|
    \frac12\int_{t_n}^{t_{n+1}} (\xi_i\cdot \nabla)(\xi_i\cdot\nabla) b(s) \dd s 
    - \frac12\int_{t_n}^{t_{n+1}} \int_{t_n}^{t_{n+1}}(\xi_j\cdot \nabla)(\xi_k\cdot\nabla) b(t_n) \dd W_s^j \dd W_t^k
    \right\|_\myH^2
    \right]
    \\
    & \quad
    + 4 \expect\left[
    \left\|
    \frac12\int_{t_n}^{t_{n+1}}  (\xi_i\cdot\nabla)(\xi_i\cdot\nabla)(q(s)- 2b(s)) \dd s
    \right.\right.
    \\
    & \qquad \qquad \qquad \left.\left.-\frac12\int_{t_n}^{t_{n+1}}\int_{t_n}^{t_{n+1}}
    (\xi_j\cdot\nabla)(\xi_k\cdot\nabla)(q(s)- 2b(s))
    \dd W_s^j \dd W_t^k
    \right\|_\myH^2
    \right]
    \\
    & \quad
    + 4 \expect[\|H.O.T.\|_\myH^2]
\end{aligned}
\end{equation}
Each term in \eqref{eq: local truncation error bound} can be shown to be of $O(\Delta t^2)$. We now go through the arguments for the first term, i.e. 
\begin{equation}
\expect\left[ \int_{t_n}^{t_{n+1}} \left\|  \big(\frac23 \theta_R^{(2)}
    +\frac13\theta_R^{(n)}\big)\bu(t_n) \cdot \nabla b(t_n)- \theta_R(s)\bu(s) \cdot \nabla b(s) \right \|_\myH^2 \  \dd s \right] = O(\Delta t^2).
\end{equation}
Using \eqref{eq: truncated q} we obtain
\begin{align}
\bu(s)
    &=  \bu(t_n) - \myK\left( \int_{t_n}^s \left[\theta_R(\tau)\bu(\tau) \cdot \nabla(q(\tau) - b(\tau)) 
    - 
    \right.
    \right.
    \nonumber
    \\ &
    \qquad \qquad \qquad
    \left.
    \left.
    \bu_h \cdot \nabla b(\tau) \right] \dd\tau 
    - \int_{t_n}^s \xi_i \cdot \nabla \left[q(\tau) - b(\tau) \right] \circ \dd W_{\tau}^i
    \right)
    \label{eq: truncated u}
\end{align}
Thus
\begin{align}
    & \expect\left[ \int_{t_n}^{t_{n+1}} \left\|  \big(\frac23\theta_R^{(2)}+\frac13\theta_R^{(n)}\big)\bu(t_n) \cdot \nabla b(t_n)- \theta_R(s)\bu(s) \cdot \nabla b(s) \right\|_\myH^2 \  \dd s \right]  \\
  = &
  \expect
  \left[ 
            \int_{t_n}^{t_{n+1}} 
  \left\|
        \left(
        \theta_R(s)
    - 
    \big(\frac23 \theta_R^{(2)}
    +\frac13\theta_R^{(n)}\big)\right)
    \bu(t_n) \cdot \nabla b(t_n)
\right.
\right.
            \nonumber
            \\
            & \qquad
            \left.\left.
- 
\theta_R(s)
\left\{\myK
  \left( 
  \int_{t_n}^s 
      \left[
      \theta_R(\tau)
      \bu(\tau) \cdot \nabla(q(\tau) - b(\tau)) - \bu_h \cdot \nabla b(\tau) 
      \right] d\tau 
  \right.
  \right.
  \right.
  \right.
  \nonumber
  \\ &
  \qquad
  \left.
    \left.
            - \int_{t_n}^s \xi_i \cdot \nabla \left[q(\tau) - b(\tau) \right] \circ dW_{\tau}^i
    \right)
    \right\}
            \cdot\nabla \left\{
            b(t_n) - \int_{t_n}^s
            \theta_R(\tau) \bu(\tau)\cdot\nabla b(\tau)d\tau - \int_{t_n}^s \xi_i\cdot\nabla b(\tau) \circ dW_\tau^i
            \right\}
    \nonumber
    \\
    &
    \qquad\qquad\qquad
    \left.
    \left.
    -  \theta_R(s)
       \bu(t_n)\cdot\nabla
       \left\{ 
       \int_{t_n}^s
       \theta_R(\tau)
       \bu(\tau)\cdot\nabla b(\tau)d\tau - \int_{t_n}^s \xi_i\cdot\nabla b(\tau) \circ dW_\tau^i
       \right\}
    \right \|_\myH^2 \  ds 
  \right]\label{eq: order_proof_term_0}
\end{align}
In \eqref{eq: order_proof_term_0}, as written, except for the first term, all other terms consist of deterministic and stochastic integrals. Thus by Cauchy-Schwartz, It\^o isometry and Theorem 
\ref{prop:smoothPathwise}, all terms except the first are of order 2 or higher in $\Delta t$. 

To estimate the first term of \eqref{eq: order_proof_term_0}, we have
\begin{align}
& \expect
\left[
    \int_{t_n}^{t_{n+1}} 
    \left\| 
        \left(
          \theta_R(s) -  \big(\frac23\theta_R^{(2)}+\frac13\theta_R^{(n)}\big)
        \right)
        \bu(t_n) \cdot \nabla b(t_n)
    \right\|_\myH^2 \  
    \dd s 
\right]
\\
\leq
&
2 \expect
\left[
    \int_{t_n}^{t_{n+1}} 
    \left\| 
           \frac23\Big(\theta_R(s) - \theta_R^{(2)}\Big)
           \bu(t_n) \cdot \nabla b(t_n)
    \right\|_2^2
            + 
    \left\|
    \frac13\Big(\theta_R(s) - \theta_R^{(n)} \Big)
        \bu(t_n) \cdot \nabla b(t_n)
    \right\|_2^2
    \  
    \dd s 
\right]. \label{eq: upper bound 1}
\end{align}
Now, we can expand the second intermediate step of the time stepping scheme \eqref{eq: ssprk3 stqg b} to obtain
\begin{equation} \bq^{(2)}
=
\bq^{(n)}
+
 \frac12
 \Delta t 
 \mathcal{A}_R^{(n,n)} \bq^{(n)}
 +
 \frac12 \Delta W^i 
\mathcal{G}_i\bq^{(n)}
  + O(\Delta t \Delta W^i)
,
\end{equation}
in which the terms that constitute $O(\Delta t \Delta W^i)$ depend on $\bq^{(n)}$ only. Then, 
using the fact that $\theta_R$ and the norms (see \eqref{eq: theta_r}) are Lipschitz,
the terms in \eqref{eq: upper bound 1} can be estimated as follows,
\begin{align}
&\expect
\left[
    \int_{t_n}^{t_{n+1}} 
        \left\| 
           \Big(\theta_R(s) - \theta_R^{(2)}\Big)
           \bu(t_n) \cdot \nabla b(t_n)
        \right\|^2_2
        \dd x
    \dd s
\right] 
\\
\leq
&
C_1\ \expect
\left[\left\|
        \bu(t_n) \cdot \nabla b(t_n)
      \right\|_2^2
\right]
\expect\left[
    \int_{t_n}^{t_{n+1}} 
          \Big(
          \|\nabla b(s) - \nabla b^{(n)} \|_{k,2}^2
          + 
          \| \nabla \bu(s) - \nabla \bu^{(n)} \|_{k,2}^2
\right.
\nonumber
\\
&\qquad\qquad\qquad
\left.
        + \|q(s) - q^{(n)} \|_{k,2}^2
        + \|O(\Delta t,\ \Delta W) \|_{k,2}^2
        \Big)
    \dd s
\right]\label{eq: order_proof_term_1}
\end{align}
and
\begin{align}
&\expect
\left[
    \int_{t_n}^{t_{n+1}} 
        \left\| 
           \Big(\theta_R(s) - \theta_R^{(n)}\Big)
           \bu(t_n) \cdot \nabla b(t_n)
        \right\|^2_2
        \dd x
    \dd s
\right] 
\\
\leq
&
C_2\ \expect
\left[\left\|
        \bu(t_n) \cdot \nabla b(t_n)
      \right\|_2^2
\right]
\expect\left[
    \int_{t_n}^{t_{n+1}} 
          \Big(
          \|\nabla b(s) - \nabla b^{(n)} \|_{k,2}^2
          + 
          \| \nabla \bu(s) - \nabla \bu^{(n)} \|_{k,2}^2
\right.
\nonumber
\\
&\qquad\qquad\qquad
\left.
        + \|q(s) - q^{(n)} \|_{k,2}^2
        \Big)
    \dd s
\right]\label{eq: order_proof_term_2}
\end{align}
where we have also used the Sobolev embedding $\mathcal{W}^{k,2}(\mathbb{T}^2)\hookrightarrow L^\infty(\mathbb{T}^2)$, see \cite[Theorem~4.12]{adams2003sobolev}, to introduce $\|\cdot\|_{k,2}$. Substituting in \eqref{ct}, \eqref{ct2} and \eqref{eq: truncated u} for $b(s)$, $q(s)$ and $\bu(s)$ 
in \eqref{eq: order_proof_term_1} and \eqref{eq: order_proof_term_2},
we can show they are both $O(\Delta t^2)$. This follows from Cauchy-Schwartz, It\^o isometry and Theorem 
\ref{prop:smoothPathwise}. 

Therefore, we have
\begin{equation}
    \expect
\left[
    \int_{t_n}^{t_{n+1}} 
    \left\| 
        \left(
          \theta_R(s) -  \big(\frac23\theta_R^{(2)}+\frac13\theta_R^{(n)}\big)
        \right)
        \bu(t_n) \cdot \nabla b(t_n)
    \right\|_\myH^2 \  
    \dd s 
\right] = O(\Delta t^2)
\end{equation}
which concludes the proof.

For the other terms, the same arguments and calculations are applied. We omit their details from this proof.
Though, we note that in addition, for the $H.O.T.$ term, the scheme requires three derivatives of $b(t_n)$ and $q(t_n)$. This is guaranteed by the assumption introduced at the beginning of this section, that is $b\in L^2(\Omega; C([0,T]; W^{4,2}(\mathbb{T}^2)))$ and $q\in L^2(\Omega; C([0,T]; W^{3,2}(\mathbb{T}^2)))$.
\end{proof}

\begin{lemma}
The discretisation \eqref{eq: ssprk3 one-step} satisfies \eqref{eq: stratonovich compatibility}.
\end{lemma}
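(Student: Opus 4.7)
The plan is to work directly from the one-step form \eqref{eq: ssprk3 one-step} of the SSPRK3 scheme after substituting the intermediate stages into one another. First I would isolate $S^s_{\Delta t}\bq(t_n)$, the collection of all terms that contain only Brownian increments $\Delta W^i$ and no explicit factor of $\Delta t$. A careful bookkeeping of the three-stage substitution shows that this purely stochastic part takes the form
\begin{equation*}
S^s_{\Delta t}\bq(t_n) = \Delta W^i\,\mathcal{G}_i \bq(t_n) + \tfrac{1}{2}\Delta W^i\Delta W^j\,\mathcal{G}_i\mathcal{G}_j \bq(t_n) + \tfrac{1}{6}\Delta W^i\Delta W^j\Delta W^k\,\mathcal{G}_i\mathcal{G}_j\mathcal{G}_k \bq(t_n),
\end{equation*}
with the cubic term absorbed into the $H.O.T.$ of \eqref{eq: ssprk3 one-step}. (The combinatorics $\tfrac13+\tfrac23=1$ for the linear coefficient and $\tfrac16+\tfrac13=\tfrac12$ for the quadratic coefficient follow from the $(\tfrac34,\tfrac14)$ and $(\tfrac13,\tfrac23)$ Shu--Osher weights.)

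Next I would take the conditional expectation given $\mathcal{F}_{t_n}$ term by term. Since the increments $\Delta W^i = W^i_{t_{n+1}}-W^i_{t_n}$ are $\mathcal{F}_{t_n}$-independent and centered Gaussian, we have $\expect[\Delta W^i\mid\mathcal{F}_{t_n}]=0$, $\expect[\Delta W^i\Delta W^j\mid\mathcal{F}_{t_n}]=\delta_{ij}\Delta t$ by independence of the $W^i$, and $\expect[\Delta W^i\Delta W^j\Delta W^k\mid\mathcal{F}_{t_n}]=0$ (odd Gaussian moment). Consequently the linear and cubic contributions vanish in conditional expectation and the quadratic contribution collapses to exactly
\begin{equation*}
\tfrac{1}{2}\Delta t\,\mathcal{G}_i\mathcal{G}_i \bq(t_n),
\end{equation*}
which is the Stratonovich correction asked for on the right-hand side of \eqref{eq: stratonovich compatibility}.

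It remains to show that any residual purely-stochastic contributions hidden inside the $H.O.T.$ collect into an $O(\Delta t^\alpha)$ remainder with $\alpha=2$. The terms in $H.O.T.$ that are purely stochastic are monomials $(\Delta W^{i_1})\cdots(\Delta W^{i_r})$ with $r\geq 4$ acting on iterated compositions $\mathcal{G}_{i_1}\cdots\mathcal{G}_{i_r}\bq(t_n)$. By Isserlis' theorem, $\expect[(\Delta W^{i_1})\cdots(\Delta W^{i_r})\mid\mathcal{F}_{t_n}]$ is zero for odd $r$ and is an $O((\Delta t)^{r/2})$ sum of products of Kronecker deltas for even $r$, hence $O(\Delta t^2)$ once $r\geq 4$. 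Using the assumption from the consistency proof that $b\in L^2(\Omega;C([0,T];W^{4,2}(\mathbb{T}^2)))$ and $q\in L^2(\Omega;C([0,T];W^{3,2}(\mathbb{T}^2)))$, each $\mathcal{G}_{i_1}\cdots\mathcal{G}_{i_r}\bq(t_n)$ lives in $L^2(\mathbb{T}^2)$ with norm controlled by $\prod \Vert\bm{\xi}_{i_s}\Vert_{4,\infty}$, so summability of $\sum_i\Vert\bm{\xi}_i\Vert_{4,\infty}$ from \eqref{dataMain} guarantees that the full series of remainder terms converges and is uniformly $O(\Delta t^2)$ in the $L^2_\myH$ sense.

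The main obstacle is this last step: controlling the infinite sums indexed by the family of noises $\{\bm{\xi}_i\}_{i\in\mathbb{N}}$ inside the iterated compositions $\mathcal{G}_{i_1}\mathcal{G}_{i_2}\cdots$. The combinatorial coefficients from the Shu--Osher weights and Isserlis' pairings must be tracked precisely enough to confirm no unexpected $O(\Delta t)$ leftover survives after conditional expectation; an inductive argument stage by stage (bounding $\mathcal{G}_i\bq^{(1)}$, then $\mathcal{G}_i\bq^{(2)}$) together with the assumed regularity of $(b,q)$ and the absolute summability $\sum_i\Vert\bm{\xi}_i\Vert_{4,\infty}<\infty$ will close the estimate with $\alpha=2$.
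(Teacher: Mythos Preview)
Your approach is correct and is exactly what the paper does: its entire proof is the single sentence ``This follows directly from applying $\expect[\ \cdot\ |\mathcal{F}_{t_n}]$ to $S^s_{\Delta t}$.'' You have simply unpacked that sentence in full detail.

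One minor remark: your final paragraph worrying about purely stochastic monomials of degree $r\geq 4$ is unnecessary. Since \eqref{eq: ssprk3 stqg} is a three-stage scheme and each stage applies $\mathcal{G}_i(\cdot)\Delta W^i$ at most once, the purely stochastic part of $S_{\Delta t}\bq(t_n)$ is \emph{exactly} the cubic polynomial in the $\Delta W^i$ that you wrote down; there are no $r\geq 4$ terms hidden in $H.O.T.$ (all remaining $H.O.T.$ terms carry at least one explicit factor of $\Delta t$, coming from the $\mathcal{A}_R$ operators). Hence after conditioning the linear and cubic contributions vanish and the quadratic one gives precisely $\tfrac12\Delta t\,\mathcal{G}_i\mathcal{G}_i\bq(t_n)$ with zero remainder, so \eqref{eq: stratonovich compatibility} holds with any $\alpha>1$ and no Isserlis/summability argument is needed.
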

\begin{proof}
This follows directly from applying $\expect[\ \cdot\ |\mathcal{F}_{t_n}]$ to $S^s_{\Delta t}$.
\end{proof}

\begin{appendices}

\section{Geometric considerations of TQG flows}\label{App-A}

The TQG conservation laws in \eqref{eq:casimirstqg} for divergence-free flow are in the same form as would be satisfied in the dynamics of semidirect-product Lie-Poisson bracket for fluid flow in the variables $(q,b)$ \cite{HMR1998}. Thus, one asks whether TQG can be cast by a change of variables into Hamiltonian form and endowed with a semidirect-product Lie-Poisson bracket.  If this is possible, then even though it does not follow from Hamilton's principle the TQG model would fit into the standard Hamiltonian framework for all ideal fluids with advected quantities, see \cite{HMR1998}. 

We introduce the variable $\varpi$, given by
\[
\varpi := (1-\Delta)\psi + b  = f_1 - q + b
\quad\hbox{so that}\quad
\psi = (1-\Delta)^{-1}(\varpi - b)\,.
\]
Consequently, when written in  the variables $(\varpi,b)$, the energy in \eqref{Erg-TQG} becomes
the TQG Hamiltonian,
\begin{equation}
\mc H_{TQG}(\varpi,b ) =  \frac12 \int_{\mc D} 
 (\varpi - b)(1-\Delta)^{-1}(\varpi - b)
+ (b+h_1)^2/2 \ \dd x \dd y\,.
\label{eq:erg-TQG2}
\end{equation}
One computes the variational derivatives of the Hamiltonian $\mc H_{TQG}(\varpi,b )$ as
\[
\delta \mc H_{TQG}
=
 \int_{\mc D} 
\psi\cdot \delta \varpi 
+ \Big( (b + h_1)/2 - \psi \Big) \delta b \ \dd x\dd y
\,.\]
Now, in the new notation, we can rewrite the TQG equations in standard Hamiltonian form, with a  Lie-Poisson bracket defined on the dual to a semidirect-product  Lie algebra, as discussed for example in \cite{HMR1998}. Explicitly, this is
 \begin{equation}
 \frac{\partial }{\partial t} 
\begin{bmatrix}
\varpi \\ b
\end{bmatrix}
=
\begin{bmatrix}
J(\varpi,\,\cdot\,) & J(b,\,\cdot\,) 
\\ 
J(b,\,\cdot\,)  & 0
\end{bmatrix}
\begin{bmatrix}
{\delta \mc H_{TQG}}/{\delta \varpi} = \psi
\\ 
{\delta \mc H_{TQG}}/{\delta b} = (b + h_1)/2 - \psi
\end{bmatrix}\,.
\label{TQG-Ham3}
\end{equation}
The Hamiltonian operator form of the TQG equations in \eqref{TQG-Ham3} is identical to that of the Euler-Boussinesq convection (EBC) equations in \cite{HolmPan2022}. 
We are now in a position to explain the conservation laws for Hamilton matrix operator in \eqref{TQG-Ham3}. Namely, those conservation laws comprise Casimir functions $C_{\Phi,\Psi}$ whose variational derivatives are null eigenvectors of the Hamilton matrix operator in \eqref{TQG-Ham3} which defines the semidirect-product Lie-Poisson bracket as
 \begin{align}
  \begin{split}
 \frac{\dd }{\dt} \mc{F}(\varpi,b)
 &=
\int_{\mc D}
\begin{bmatrix}
{\delta \mc F}/{\delta \varpi} 
\\ 
{\delta \mc F}/{\delta b} 
\end{bmatrix}^T
\begin{bmatrix}
J(\varpi,\,\cdot\,) & J(b,\,\cdot\,) 
\\ 
J(b,\,\cdot\,)  & 0
\end{bmatrix}
\begin{bmatrix}
{\delta \mc H}/{\delta \varpi} 
\\ 
{\delta \mc H}/{\delta b} 
\end{bmatrix}
\dd x\dd y
=:
\Big\{\mc F\,,\,{\mc H}\Big\}(\varpi,b)
\\&=-
\int_{\mc D} 
\varpi \,J\bigg(\frac{\delta \mc F}{\delta \varpi},\frac{\delta \mc H}{\delta \varpi}\bigg)
+ b \Bigg(J\bigg(\frac{\delta \mc F}{\delta b},\frac{\delta \mc H}{\delta \varpi}\bigg)
- J\bigg(\frac{\delta \mc F}{\delta \varpi},\frac{\delta \mc H}{\delta b}\bigg)\Bigg) \dd x\dd y
\,.
\end{split}
\label{TQG-brkt3}
\end{align}
In particular, for $\mc{F}(\varpi,b)=\int_{\mc D}\Phi(b)+\varpi\Psi(b)\dd x \dd y$ we have 
\begin{align}
  \begin{split}
 \frac{\dd }{\dt} \mc{F}(\varpi,b)
 &=
\Big\{\mc F\,,\,{\mc H}\Big\}(\varpi,b)
\\&=-
\int_{\mc D} 
\varpi \,J\bigg(\Psi(b),\frac{\delta \mc H}{\delta \varpi}\bigg)
+  \Bigg(bJ\bigg(\Phi'(b)+\varpi\Psi'(b),\frac{\delta \mc H}{\delta \varpi}\bigg)
- bJ\bigg(\Psi(b),\frac{\delta \mc H}{\delta b}\bigg)\Bigg) \dd x\dd y
\\&=-
\int_{\mc D} 
\frac{\delta \mc H}{\delta \varpi}J\bigg(\varpi ,\Psi(b)\bigg)
+ \Bigg( \frac{\delta \mc H}{\delta \varpi}J\bigg(b,\Phi'(b)+\varpi\Psi'(b)\bigg)
- \frac{\delta \mc H}{\delta b}J\bigg(b,\Psi(b)\bigg)\Bigg) \dd x \dd y
\,.
\\&=-
\int_{\mc D} 
\frac{\delta \mc H}{\delta \varpi}\Big(J\big(\varpi ,b\big)
+  J\big(b,\varpi\big)\Big)
\Psi'(b) \dd x \dd y
\\& = 0 \quad\hbox{for all}\quad {\mc H}
\,.
\end{split}
\label{TQG-CasimirBrkt3}
\end{align}
This explains the conservation of $C_{\Phi,\Psi}$ in \eqref{eq:casimirstqg}. These quantities are Casimir functions which are conserved for \emph{every} Hamiltonian $\mc{H}(\varpi,b)$ for the semidirect-product Lie-Poisson bracket in \eqref{TQG-brkt3}. This conservation occurs because Lie-Poisson brackets generate coadjoint orbits, and coadjoint orbits are level sets of the bracket's Casimir functions. The requirement that TQG motion takes place on level sets of the Casimir functions limits the function space available to TQG solutions. In particular, the TQG solutions are restricted to stay on the same level set as their initial conditions.

\begin{proposition}\label{LPB-TQG}
The Lie-Poisson bracket in \eqref{TQG-brkt3} satisfies the Jacobi identity. \end{proposition}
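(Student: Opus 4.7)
The plan is to recognise the bracket \eqref{TQG-brkt3} as the canonical Lie--Poisson bracket on the dual of a semidirect-product Lie algebra, and then appeal to the general fact that every Lie--Poisson bracket inherits Jacobi from the Lie bracket of the underlying algebra. Concretely, I would identify the Lie algebra as $\mathfrak{g}\circledS V$, where $\mathfrak{g}=C^\infty(\mathbb{T}^2)/\mathbb{R}$ is equipped with the Poisson bracket $[\phi_1,\phi_2]_{\mathfrak{g}}:=-J(\phi_1,\phi_2)=\nabla^\perp\phi_1\cdot\nabla\phi_2$, and $V=C^\infty(\mathbb{T}^2)$ carries the representation $\phi\cdot a:=-J(\phi,a)=\nabla^\perp\phi\cdot\nabla a$. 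Writing the duality pairing as $\langle(\varpi;b),(\phi;a)\rangle=\int_{\mc D}(\varpi\phi+ba)\,dx\,dy$ and using $\int u\,J(v,w)\,dx\,dy=\int v\,J(w,u)\,dx\,dy$ (valid by integration by parts since the $J$-terms are divergence-free transport), one checks that
\begin{align*}
\{\mc F,\mc H\}(\varpi,b)=\big\langle(\varpi;b)\,,\,\big[(\mc F_\varpi,\mc F_b),(\mc H_\varpi,\mc H_b)\big]_{\mathfrak{g}\circledS V}\big\rangle,
\end{align*}
where the semidirect-product bracket is $[(\phi_1,a_1),(\phi_2,a_2)]_{\mathfrak{g}\circledS V}=\big([\phi_1,\phi_2]_{\mathfrak{g}}\,,\,\phi_1\cdot a_2-\phi_2\cdot a_1\big)$.

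The second step is to verify that $[\,\cdot\,,\,\cdot\,]_{\mathfrak{g}\circledS V}$ is a genuine Lie bracket. Skew-symmetry is immediate. The Jacobi identity decomposes into three pieces: (i) Jacobi for $[\,\cdot\,,\,\cdot\,]_{\mathfrak{g}}$, which is the classical Jacobi identity for the Poisson bracket $J$ on $\mathbb{T}^2$; (ii) the representation property $[\phi_1,\phi_2]_{\mathfrak{g}}\cdot a=\phi_1\cdot(\phi_2\cdot a)-\phi_2\cdot(\phi_1\cdot a)$, which is again equivalent to the same Jacobi identity for $J$ applied to $(\phi_1,\phi_2,a)$; and (iii) the trivial commutativity of the $V$-component (since $V$ is abelian). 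All three reduce to $J(J(f,g),h)+J(J(g,h),f)+J(J(h,f),g)=0$, a direct pointwise computation.

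Finally, to conclude Jacobi for $\{\,\cdot\,,\,\cdot\,\}$, I would invoke the standard general principle: given any (possibly infinite-dimensional) Lie algebra $\mathfrak{h}$ and a smooth pairing $\langle\cdot,\cdot\rangle:\mathfrak{h}^*\times\mathfrak{h}\to\mathbb{R}$, the Lie--Poisson bracket $\{F,H\}(\mu)=\langle\mu,[\delta F/\delta\mu,\delta H/\delta\mu]_{\mathfrak{h}}\rangle$ satisfies Jacobi provided $[\,\cdot\,,\,\cdot\,]_{\mathfrak{h}}$ does and provided the second variational derivatives commute, i.e.\ $\delta^2 F/\delta\mu^2$ is symmetric; a textbook reference is \cite{HMR1998}. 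Applying this with $\mathfrak{h}=\mathfrak{g}\circledS V$ and $\mu=(\varpi;b)$ gives the claim.

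The main obstacle, and the one I would guard against most carefully, is bookkeeping of signs: the Jacobian $J$ is skew, the Lie bracket on $\mathfrak{g}$ and the action on $V$ can each be defined with opposite sign conventions, and the pairing together with integration by parts introduces further signs when moving $\varpi$ and $b$ inside $J$. A useful sanity check is to recover the coadjoint-operator form \eqref{SDP-TQG-LP-SALT} of the dynamics directly from the Lie bracket identified above, which pins down every sign. If a reader prefers a bracket-theoretic rather than structural proof, the same identity can alternatively be checked by a brute-force expansion of $\{\mc F,\{\mc G,\mc H\}\}+\text{cyclic}$, in which every resulting term either cancels pairwise or collapses under the pointwise Jacobi identity for $J$; but the semidirect-product route above makes clear \emph{why} Jacobi holds rather than merely that it does.
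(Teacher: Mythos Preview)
Your proposal is correct and follows essentially the same route as the paper: both identify \eqref{TQG-brkt3} as the Lie--Poisson bracket on the dual of the semidirect-product Lie algebra $f_1\circledS f_2$ (your $\mathfrak{g}\circledS V$) with commutator $[f_1,f_2]=J(f_1,f_2)$, and both conclude Jacobi from the fact that a Lie--Poisson bracket inherits it from the underlying Lie algebra commutator. The paper additionally spells out the $\mathrm{ad}^*$ computation in \eqref{SDP-adstar-action-fns} to exhibit the coadjoint form of the dynamics, while you are a bit more explicit in decomposing the semidirect-product Jacobi identity into its three constituent pieces; but these are differences of emphasis, not of strategy.
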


\begin{proof}
This proposition could be demonstrated by direct computation using the well-known properties of the Jacobian of function pairs. The proof given here, though, will illustrate the geometric properties of the TQG system in \eqref{trsw-tg} and, thus, place it into the wider class of ideal fluid dynamics with advected quantities. In particular, the proof will identify the Poisson bracket in \eqref{TQG-brkt3} as being defined over domain $\mc D$ on functionals of the \emph{dual}\footnote{Dual with respect to the $L^2$ pairing on $\mc D$} $(f_1\circledS f_2)^*$ of the Lie algebra  $(f_1\circledS f_2)$ of semidirect-product symplectic transformations. Thus, TQG dynamics is understood as coadjoint motion generated by the semidirect-product action of the Lie algebra $(f_1\circledS f_2)$ on function pairs $(f_1;f_2)\in (f_1\circledS f_2)^*$. This proof of coadjoint motion also identifies the potential vorticity and buoyancy, $(\varpi;b)$, as a semidirect-product momentum map.  

The Lie algebra commutator $[\,\cdot\,,\,\cdot\,]$ action for the adjoint (ad) representation of the action of the Lie algebra $f_1\circledS f_2$ on itself is defined by 
\begin{align}
{\rm ad}_{\big(\overline{f}_1;\overline{f}_2\big)}\big(f_1;f_2\big) 
=
\Big[ \big(f_1;f_2\big) , \big(\overline{f}_1;\overline{f}_2\big)\Big] 
:=
\Big( \big[ f_1,\overline{f}_1 \big] ; \big[ f_1,\overline{f}_2 \big] - \big[ \overline{f}_1,f_2 \big]\Big)\,,
\label{SDP-ad-action-fns}
\end{align}
where the commutator $[\,\cdot\,,\,\cdot\,]$ is given by the Jacobian of the functions $f_1$ and $\overline{f}_2$. For example,
\begin{align}
[ f_1,\overline{f}_2 ]:= J(f_1,\overline{f}_2)
\,,\label{SDP-commutator-fns}
\end{align}
which is also the commutator of symplectic vector fields. Thus, the adjoint (ad) action in \eqref{SDP-ad-action-fns} is the semidirect product Lie algebra action among symplectic vector fields. 

The definition in \eqref{SDP-ad-action-fns} of the semidirect product Lie algebra action among functions defined on the plane $\mathbb{R}^2$ enables the Lie-Poisson bracket \eqref{TQG-brkt3} for functionals of $(\varpi,b)$ defined on domain $\mc D$ to be identified with the coadjoint action of this Lie algebra. This is because the variational derivatives of such functionals live in the Lie algebra of symplectic vector fields on domain $\mc D$. Thus,
\begin{align}
\begin{split}
 \frac{\dd }{\dt} \mc{F}(\varpi,b)
 =
\Big\{\mc F\,,\,{\mc H}\Big\}(\varpi,b)
&=
- \Bigg\langle
\big(\varpi ; b\big) \,,\,\Bigg[ \bigg(\frac{\delta \mc F}{\delta \varpi} ; \frac{\delta \mc H}{\delta b}\bigg), 
\bigg(\frac{\delta \mc H}{\delta \varpi} ;\frac{\delta \mc F}{\delta b} \bigg)\Bigg] \Bigg\rangle
\\&=
- \Bigg\langle \big(\varpi ; b\big)
\,,\,
{\rm ad}_{\big(\frac{\delta \mc H}{\delta \varpi};\frac{\delta \mc H}{\delta b}\big)}
\bigg(\frac{\delta \mc F}{\delta \varpi};\frac{\delta \mc F}{\delta b}\bigg)
 \Bigg\rangle
\\&=:
- \Bigg\langle {\rm ad}^*_{\big(\frac{\delta \mc H}{\delta \varpi};\frac{\delta \mc H}{\delta b}\big)}\big(\varpi ; b\big)
\,,\,
\bigg(\frac{\delta \mc F}{\delta \varpi};\frac{\delta \mc F}{\delta b}\bigg)
 \Bigg\rangle
\,,
\end{split}
\label{SDP-LPB-TQG}
\end{align}
in which the angle brackets $\langle\,\cdot\,,\,\cdot\,\rangle$ represent the pairing of the Lie algebra of functions $f_1\circledS f_2$ with its dual Lie algebra $(f_1\circledS f_2)^*$ via the $L^2$ pairing on the domain $\mc D$ as in equation \eqref{TQG-brkt3}. Consequently, we may rewrite the TQG equations in \eqref{TQG-Ham3} in terms of this coadjoint action and thereby reveal its geometric nature,
\begin{align}
\frac{\partial \big(\varpi ; b\big)}{\partial t}
= - \,{\rm ad}^*_{\big(\frac{\delta \mc H}{\delta \varpi};\frac{\delta \mc H}{\delta b}\big)}
\big(\varpi ; b\big)
= - \,{\rm ad}^*_{\big(\psi\,;\, (b + h_1)/2 - \psi\big)}
\big(\varpi ; b\big)
\,.
\label{SDP-TQG-LP}
\end{align}
Thus, TQG dynamics is governed by the semidirect-product coadjoint action shown in equation \eqref{SDP-LPB-TQG} of symplectic vector fields acting on the momentum map $(\varpi ; b)$ defined in the dual space of potential vorticity and buoyancy functions . 
\bigskip

For completeness, we write the coadjoint (ad$^*$) representation of the (right) action of the Lie algebra $f_1\circledS f_2$ on its dual Lie algebra explicitly in terms of the Jacobian operator between pairs of functions as 
\begin{align}
\begin{split}
\Big\langle \big(\varpi ; b\big) \,,\,
{\rm ad}_{\big(\overline{f}_1;\overline{f}_2\big)}\big(f_1;f_2\big) 
\Big\rangle
&:= -\,
\Big\langle \big(\varpi ; b\big) \,,\,
\Big[ \big(f_1;f_2\big) , \big(\overline{f}_1;\overline{f}_2\big)\Big] 
\Big\rangle
\\
\hbox{By \eqref{SDP-ad-action-fns} and \eqref{SDP-commutator-fns}}\quad
&:= -\,
\Big\langle \big(\varpi ; b\big) \,,\,
\Big( J\big( f_1,\overline{f}_1 \big) ; J\big( f_1,\overline{f}_2 \big) 
- J\big( \overline{f}_1,f_2 \big)\Big)
\Big\rangle
\\
\hbox{Defining the SDP pairing}\quad
&:= -\,
\int_{\mc D} 
\varpi \,J\big( f_1 , \overline{f}_1 \big)
+ b \Big( J\big( f_1,\overline{f}_2 \big) - J\big( \overline{f}_1,f_2 \big) \Big)
\,\dd x \dd y
\\& = -\,
\int_{\mc D} 
\varpi \,J\big( f_1,\overline{f}_1 \big)
+  \Big( b J\big( f_1,\overline{f}_2 \big) + b J\big( f_2 , \overline{f}_1 \big) \Big)
\,\dd x \dd y
\\\hbox{By integrating by parts}\quad
& = 
\int_{\mc D} 
f_1\,\Big( J\big( \varpi ,\overline{f}_1 \big) +  J\big( b  ,\overline{f}_2 \big)\Big) 
- f_2J\big( b , \overline{f}_1 \big) 
\,\dd x \dd y
\\
\hbox{Upon the SDP pairing}\quad
&= 
\Big\langle 
\Big( J\big( \varpi,\overline{f}_1 \big) + J\big( b ,\overline{f}_2\big)  \,;\, - J\big( \overline{f}_1,b \big)
\Big)
\,,\,
\big( f_1 ; f_2\big) 
\Big\rangle
\\
\hbox{By \eqref{SDP-LPB-TQG}}\quad
&=: 
\Big\langle {\rm ad}^*_{\big(\overline{f}_1;\overline{f}_2\big)}\big(\varpi ; b\big) \,,\,
\big(f_1;f_2\big) 
\Big\rangle
\quad\hbox{with}\quad
\big(\overline{f}_1;\overline{f}_2\big)
=\Big(\frac{\delta \mc H}{\delta \varpi};\frac{\delta \mc H}{\delta b}\Big)
\,.
\end{split}
\label{SDP-adstar-action-fns}
\end{align}
This calculation confirms that the bracket in \eqref{TQG-brkt3} satisfies the Jacobi identity, by being a linear functional of a Lie algebra commutator which satisfies the Jacobi identity. The calculation thus identifies the bracket in \eqref{TQG-brkt3} as the Lie-Poisson bracket for functionals of $(\varpi ; b)$ defined on the dual $(f_1\circledS f_2)^*$ of the semidirect-product Lie algebra $f_1\circledS f_2$, whose commutator is defined in \eqref{SDP-ad-action-fns}. 
\end{proof}


\paragraph{\bf Summary.}
The TQG model in \eqref{trsw-tg} has been shown to be a Hamiltonian system for the semidirect-product Lie-Poisson bracket defined in equation \eqref{TQG-brkt3}. This result implies that the TQG system will possess all of the geometric properties belonging to the class of ideal fluid models with advected quantities. The geometric properties of this class of ideal fluid models are discussed in detail in \cite{HMR1998}. In particular, the geometric nature of the evolution of the TQG system written in its Hamiltonian form in \eqref{TQG-Ham3} has been revealed in \eqref{SDP-TQG-LP} by identifying its Lie-Poisson bracket with the coadjoint action of the semidirect-product Lie group of symplectic transformations defined in equation \eqref{SDP-TQG-LP}. Namely, the solutions of the TQG system in \eqref{SDP-TQG-LP} evolve by undergoing coadjoint motion along a time-dependent path on the Lie group manifold of semidirect-product symplectic diffeomorphisms acting on the domain of flow $\mc{D}$. This association of the TQG system with the smooth flow of symplectomorphisms on  $\mc{D}$ bodes well for the analytical properties of TQG solutions. Indeed, the preservation of these smooth flow properties obtained via the SALT approach provides a geometric framework for the determination of the corresponding analytical properties for the stochastic counterpart of TQG, which is the primary aim of the present paper. 

\section{Proof of Lemma \ref{rem:xiAppendix}}
\noindent\textbf{Proof of Lemma \ref{rem:xiAppendix}}  \\
 We show that 
\begin{equation*}
  \begin{aligned}
  \langle \mathcal{L}_{\bm{\xi}_i}(q-b), \mathcal{L}_{\bm{\xi}_i}(q-b)\rangle_{k-1,2} + \langle \mathcal{L}_{\bm{\xi}_i}^2(q-2b), q\rangle_{k-1,2}\lesssim_k \|q\|_{k-1,2}^2+\|b\|_{k,2}^2
  \end{aligned}
\end{equation*}
For $k=1$ we have
\begin{equation*}
    \begin{aligned}
    \langle \mathcal{L}_{\bm{\xi}_i}(q-b), \mathcal{L}_{\bm{\xi}_i}(q-b)\rangle + \langle q, \mathcal{L}_{\bm{\xi}_i}^2(q-2b)\rangle  &=  \langle \mathcal{L}_{\bm{\xi}_i}(q-b), \mathcal{L}_{\bm{\xi}_i}(q-b)\rangle + \langle q, \mathcal{L}_{\bm{\xi}_i}^2(q-b)\rangle - \langle q, \mathcal{L}_{\bm{\xi}_i}^2b\rangle  \\
    & = \langle b, \mathcal{L}_{\bm{\xi}_i}^2q\rangle - \langle q+b, \mathcal{L}_{\bm{\xi}_i}^2b\rangle \\
    & = -\langle \mathcal{L}_{\bm{\xi}_i}b, \mathcal{L}_{\bm{\xi}_i}q\rangle + \langle \mathcal{L}_{\bm{\xi}_i}b, \mathcal{L}_{\bm{\xi}_i}q\rangle + \langle \mathcal{L}_{\bm{\xi}_i}b, \mathcal{L}_{\bm{\xi}_i}b\rangle \\
    & = \langle \mathcal{L}_{\bm{\xi}_i}b, \mathcal{L}_{\bm{\xi}_i}b\rangle  \leq \|b\|_{1,2}^2.
    \end{aligned}
\end{equation*}
We used here the fact that $\mathcal{L}_{\bm{\xi}_i}^{\star} = -\mathcal{L}_{\bm{\xi}_i}$ and the fact that condition $
\sum_{i\in \mathbb{N}} \Vert \bm{\bm{\xi}}_i \Vert_{4,\infty}<\infty$ implies 
$$\displaystyle\sum_{i=1}^{\infty} \|\mathcal{L}_{\bm{\xi}_i}f\|_2^2 \leq C\|f\|_{1,2}^2$$
for any $f\in W^{1,2}(\mathbb{T}^2)$. 
For $k\geq 2$ and a multi-index $\beta$ with $|\beta| \leq k-1$ one has
\begin{equation*}
    \begin{aligned}
    \langle \partial^{\beta}\mathcal{L}_{\bm{\xi}_i}(q-b), \partial^{\beta}\mathcal{L}_{\bm{\xi}_i}(q-b)\rangle & +  \langle \partial^{\beta}q, \partial^{\beta}\mathcal{L}_{\bm{\xi}_i}^2(q-2b)\rangle = \langle \partial^{\beta}\mathcal{L}_{\bm{\xi}_i}q,\partial^{\beta}\mathcal{L}_{\bm{\xi}_i}q\rangle + \langle \partial^{\beta}q,\partial^{\beta}\mathcal{L}_{\bm{\xi}_i}^2q\rangle \\
    & -2 \left(\langle \partial^{\beta}\mathcal{L}_{\bm{\xi}_i}q,\partial^{\beta}\mathcal{L}_{\bm{\xi}_i}b\rangle +  \langle \partial^{\beta}q,\partial^{\beta}\mathcal{L}_{\bm{\xi}_i}^2b\rangle  \right)+\langle \partial^{\beta}\mathcal{L}_{\bm{\xi}_i}b,\partial^{\beta}\mathcal{L}_{\bm{\xi}_i}b\rangle \\
    & \leq C(\|q\|_{k-1,2}^2+\|b\|_{k,2}^2).
    \end{aligned}
\end{equation*}

\section{Spatial discretisation}\label{sec: FEM}


In this appendix section, we describe the finite element (FEM) method we use for the STQG system. Our setup applies to problems on bounded domains $\domain$ with Dirichlet boundary conditions 
\begin{equation}\label{eq: tqg boundary conditions numerical}
     \psi = 0,
    \qquad \text{on } \p \domain.
\end{equation}
For $\domain = \mathbb T^2$, the boundary flux terms in the discretised equations are set to zero.

\subsubsection{The stream function equation}\label{sec: stream function discretisation}

Let $H^{1}\left(\domain\right)$ denote the Sobolev $W^{1,2}\left(\domain\right)$
space and let $\left\Vert .\right\Vert _{\partial\domain}$ denote
the $L^{2}\left(\partial\domain\right)$ norm.
Define the space 
\begin{equation}\label{eq: w1 space}
    W^{1}\left(\domain\right):=\left\{ \nu\in H^{1}\left(\domain\right)\left|\left\Vert \nu\right\Vert _{\partial\domain}=0\right.\right\}.
\end{equation}
We look for numerical solutions to the Helmholtz elliptic problem \eqref{constrt} in the space $W^1(\domain)$. 
Define the functionals
\begin{align}
    L (v, \phi) &:=  \langle \nabla v, \nabla \phi \rangle_\domain +\langle v, \phi  \rangle_\domain \label{eq: psi bilinear form}\\
    F_{\cdot}(\phi) &:=  -\langle  \cdot, \phi \rangle_\domain
    \label{eq: psi linear form}
\end{align}
for $v, \phi \in W^1(\domain)$, then the Helmholtz problem can be written as
\begin{equation}\label{eq: TQG weak elliptic}
    L( \psi, \phi) = F_{q  -  f }(\phi).
\end{equation}
We discretise \eqref{eq: TQG weak elliptic} using a continuous Galerkin (CG) discretisation scheme. 

Let $\delta$ be the discretisation parameter, and let $\domain_\delta$ denote a space filling triangulation of the domain, that consists of geometry-conforming non-overlapping elements.
Define the approximation space
\begin{equation} \label{eq: cg space}
    W_\delta^{k}(\domain):=\left\{ \phi_\delta\in W^{1}\left(\domain\right)\ : \ \phi_\delta\in C\left(\domain\right),\left.\phi_\delta\right|_{K}\in\Pi^{k}\left(K\right)\text{ each } K \in \domain_\delta\right\}.
\end{equation}
in which $C(\domain)$ is the space of continuous functions on $\domain$, and  $\Pi^{k}\left(K\right)$ denotes the space of polynomials of degree at most $k$ on element $K\in \domain_\delta$. 

For \eqref{eq: TQG weak elliptic}, given $f_\delta \in W_\delta^k(\domain)$ and $q_\delta \in V_\delta^k(\domain)$ (see \eqref{DG space} for the definition of $V_\delta^k(\domain)$), 
our numerical approximation is the solution $  \psi_\delta \in W_\delta^k(\domain)$ that satisfies 
\begin{equation}
L (  \psi_\delta, \phi_\delta) = F_{q_\delta - f_\delta}(\phi_\delta)
\label{eq: TQG psi eqn weak discretised}
\end{equation}
for all test functions $\phi_\delta \in W_\delta^k(\domain)$. 
For a detailed exposition of the numerical algorithms that solves the discretised problem \eqref{eq: TQG psi eqn weak discretised} 
 we point the reader to \cite{Gibson2019, Brenner2008}.

To handle the noise terms in the hyperbolic equations, let $\zeta_i \in W^1(\domain)$ denote the stream function of $\boldsymbol{\xi}$, i.e. $\zeta_i = \nabla^\perp \boldsymbol{\xi}_i$. So the discretised $\zeta_i^\delta$ are in the same space as $\psi_\delta$.

\subsubsection{Hyperbolic equations}\label{sec: buoyancy equation discretisation}
We choose to discretise the hyperbolic buoyancy \eqref{ce} and potential vorticity \eqref{me} equations using a discontinuous Galerkin (DG) scheme. For a detailed exposition of DG methods, we refer the interested reader to \cite{Hesthaven2008}.

Define the DG approximation space, denoted by $V_\delta^k(\domain)$, to be the element-wise polynomial space,
\begin{equation}\label{DG space}
    V_\delta^k(\domain) = \left\{ \left. v_\delta \in L^2(\domain) \right|\forall K \in \domain_\delta,\ \exists\phi_\delta\in \Pi^k (K):\ \left.v_\delta\right|_{K}=\left.\phi_\delta\right|_K\right\}.
\end{equation}
We look to approximate $b $ and $q $ in the space $V_\delta^k(\domain)$. Essentially, this means our approximations of $b $ and $q $ are each an direct sum over the elements in $\domain_\delta$. Additional constraints on the numerical fluxes across shared element boundaries are needed to ensure conservation properties and stability. 
Further, note that $W^k_\delta(\domain) \subset V^k_\delta(\domain)$. This inclusion is needed for ensuring numerical conservation of Casimirs \eqref{eq:casimirstqg}.

For the buoyancy equation \eqref{ce}, we obtain the following variational formulation
\begin{align}
\langle \dd b , \nu_\delta \rangle _{K} & = \langle b  ({\bf u}\dd t + \boldsymbol{\xi}_i \circ \dd W_t^i) ,\nabla\nu_\delta\rangle _K - \langle b  ({\bf u}\dd t + \boldsymbol{\xi}_i \circ \dd W_t^i) \cdot{\normal},\nu_\delta\rangle _{\partial K},\quad K\in \domain_\delta
\label{eq: TQG b alpha equation weak}
\end{align}
where $\nu_\delta\in V^k_\delta(\domain)$ is any test function, $\p K$ denotes the boundary of $K$, and $\normal$ denotes the unit normal vector to $\p K$. Let $b _\delta$ be the approximation of $b $ in $V_\delta^k$, and let $\bu _\delta = \nabla^\perp \psi _\delta$ for $\psi _\delta \in W^k_\delta$. Similarly, let $\boldsymbol{\xi}_i^\delta = \nabla^\perp \zeta_i^\delta$ for $\zeta_i^\delta \in W^k_\delta$. Our discretised buoyancy equation over each element is given by
\begin{equation}
    \langle \dd b _\delta, \nu_\delta \rangle _{K} = \langle b _\delta ({\bf u} _\delta \dd t + \boldsymbol{\xi}_i^\delta \circ \dd W^i),\nabla\nu_\delta\rangle _K - \langle b _\delta({\bf u} _\delta \dd t + \boldsymbol{\xi}_i^\delta \circ \dd W^i)\cdot {\normal},\nu_\delta\rangle _{\partial K}, \quad K \in \domain_\delta.
    \label{eq: TQG b alpha eqn weak discretised}
\end{equation}

Similarly, let $q _\delta \in V_\delta^k(\domain)$ be the approximation of $q $, and let $h_\delta \in W_\delta^k$. We obtain the following discretised variational formulation that corresponds to \eqref{me}, 
\begin{align}
\langle \dd q _\delta, \nu_\delta \rangle _{K} 
&= \langle (q _\delta - b _\delta)({\bf u} _\delta \dd t + \boldsymbol{\xi}_i^\delta \circ \dd W^i),\nabla\nu_\delta\rangle _{K}
-\langle (q _\delta - b _\delta) ({\bf u} _\delta \dd t + \boldsymbol{\xi}_i^\delta \circ \dd W^i) \cdot{\normal},\nu_\delta\rangle _{\partial K} \nonumber \\
& \qquad - \frac12\langle \nabla \cdot (b _\delta \nabla^\perp h_\delta), \nu_\delta \rangle _K,
\qquad K\in \domain_\delta,
\label{eq: TQG omega alpha eqn weak discretised}    
\end{align}
for test function $\nu_\delta \in V_\delta^k(\domain)$. 

At this point, we only have the discretised problem on single elements. To obtain the global approximation, we sum over all the elements in $\domain_\delta$. In doing so, the $\p K$ terms in \eqref{eq: TQG b alpha eqn weak discretised} and \eqref{eq: TQG omega alpha eqn weak discretised} must be treated carefully.
Let $\p K_\text{ext}$ denote the part of cell boundary that is contained in $\p \domain$. Let $\p K_\text{int}$ denote the part of the cell boundary that is contained in the interior of the domain $\domain\backslash \p \domain$.
On $\p K_\text{ext}$ we simply impose the PDE boundary conditions. However, on $\p K_\text{int}$ we need to consider the contribution from each of the neighbouring elements.
By choice, the approximants $\psi _\delta, \zeta_i^\delta \in W_\delta^k$ are continuous on $\p K$. And since
\begin{equation}
    (\bu _\delta + \boldsymbol{\xi}_i^\delta) \cdot \normal = \nabla^\perp (\psi_\delta+ \zeta_i^\delta) \cdot \normal = -\nabla (\psi_\delta+ \zeta_i^\delta) \cdot \hat \tau = -\frac{\dd(\psi_\delta+ \zeta_i^\delta)}{\dd\hat\tau},
\end{equation}
where $\hat\tau$ denotes the unit tangential vector to $\p K$, $(\bu _\delta + \boldsymbol{\xi}_i^\delta) \cdot \normal$ is also continuous. This means  $(\psi_\delta+ \zeta_i^\delta) \cdot \normal$ in \eqref{eq: TQG b alpha eqn weak discretised} (also in \eqref{eq: TQG omega alpha eqn weak discretised}) is single valued. However, due to the lack of global continuity constraint in the definition of $V_k (\domain)$, $b _\delta$ and $q _\delta$ are multi-valued on $\p K_\text{int}$. Thus, as our approximation of $q $ and $b $ over the whole domain is the sum over $K \in \domain_\delta$,  we have to constrain the flux on the set $\left(\bigcup_{K\in\domain_\delta} \p K \right)\setminus \p \domain$. This is done using appropriately chosen numerical flux fields in the boundary terms of \eqref{eq: TQG b alpha eqn weak discretised} and \eqref{eq: TQG omega alpha eqn weak discretised}.

Let 
{$\nu^{-}:=\lim_{\epsilon\uparrow0} \nu ({\bf x}+\epsilon\normal)$} 
and 
{ $\nu^{+}:=\lim_{\epsilon\downarrow0}\nu ({\bf x}+\epsilon\normal)$}, 
for ${\bf x}\in\partial K$, be the inside and outside (with respect to a fixed element $K$) values respectively, of a function $\nu$ on the boundary.
Let $\hat{f}$ be a \emph{numerical flux} function 
that satisfies the following properties:
\begin{enumerate}[(i)]
    \item consistency 
        \begin{equation}\label{eq: nflux consistency}
         \hat f (\nu, \nu, (\bu _\delta \dd t+ \boldsymbol{\xi}_i^\delta \circ \dd W^i) \cdot {\normal}) 
         = 
         {\nu } (\bu _\delta \dd t+ \boldsymbol{\xi}_i^\delta \circ \dd W^i)\cdot\normal 
         \end{equation}
    
    \item conservative
        \begin{equation}\label{eq: nflux conservative}
            \hat f (\nu^+, \nu^-, (\bu _\delta\dd t + \boldsymbol{\xi}_i^\delta \circ \dd W^i)\cdot {\normal}) = - \hat{f} (\nu^-, \nu^+, -(\bu _\delta \dd t  + \boldsymbol{\xi}_i^\delta \circ \dd W^i)\cdot \normal)
        \end{equation}
    
    \item $L^2$ stable in the enstrophy norm with respect to the buoyancy equation, see \cite[Section 6]{Bernsen2006}.
\end{enumerate}
With such an $\hat f$, we replace 
$b _\delta (\bu _\delta \dd t+ \boldsymbol{\xi}_i^\delta \circ \dd W^i) \cdot \normal$ by the numerical flux $\hat{f}(b_\delta^{ , +},b_\delta^{ , -},(\bu _\delta \dd t+ \boldsymbol{\xi}_i^\delta \circ \dd W^i) \cdot\normal)$  in \eqref{eq: TQG b alpha eqn weak discretised}.
Similarly, in \eqref{eq: TQG omega alpha eqn weak discretised}, we replace $(q _\delta - b _\delta)(\bu _\delta \dd t+ \boldsymbol{\xi}_i^\delta \circ \dd W^i)  \cdot \normal$ by $\hat{f}((q _\delta - b _\delta)^{+}, (q _\delta - b _\delta)^{-}, (\bu _\delta \dd t+ \boldsymbol{\xi}_i^\delta \circ \dd W^i)  \cdot \normal)$.

\begin{remark}
For a general nonlinear conservation law, one has to solve what is called the \emph{Riemann problem} for the numerical flux, see \cite{Hesthaven2008} for details.
In our setup, we use the following local Lax-Friedrichs flux, which is an approximate Riemann solver,
\begin{equation}
\hat{f}(\nu^+, \nu^-, (\bu _\delta \dd t+ \boldsymbol{\xi}_i^\delta \circ \dd W^i)\cdot{\normal}) = (\bu _\delta \dd t+ \boldsymbol{\xi}_i^\delta \circ \dd W^i)\cdot{\normal}
\{\{ \nu  \}\} - \frac{|(\bu _\delta \dd t+ \boldsymbol{\xi}_i^\delta \circ \dd W^i)\cdot{\normal}|}{2}\llbracket\nu\rrbracket
\end{equation}
where
\begin{equation}
\{\{\nu\}\}:=\frac{1}{2}(\nu^{-}+\nu^{+}),\qquad
\llbracket\nu\rrbracket:={\normal}^{-}\nu^{-}+{\normal}^{+}\nu^{+}.
\end{equation}
\end{remark}

Finally, our goal is to find $b _\delta, q _\delta \in V_\delta^k(\domain)$ such that for all {$\nu_\delta\in V_\delta^k(\domain)$} we have
\begin{align}
\sum_{K\in \domain_\delta} \langle \dd b _\delta, \nu_\delta\rangle _{K} 
&=
\sum_{K\in \domain_\delta}
\big\{
\langle b _\delta\nabla^{\perp} \tilde{\psi} _\delta, \nabla \nu_\delta\rangle _{K}  -
\langle \hat{f}_{b }(b_\delta^{ , +},b_\delta^{ , -},\nabla^{\perp}\tilde{\psi} _\delta \cdot {\normal}), \nu_\delta^{-}\rangle _{\partial K}
\big\}, 
\label{eq: TQG b eqn weak discretised numerical flux}
\\
\sum_{K\in \domain_\delta} \langle \dd q _\delta, \nu_\delta \rangle _{K} 
&= 
\sum_{K\in \domain_\delta} 
\big\{
\langle (q _\delta - b _\delta)\nabla^{\perp}\tilde{\psi}
_\delta,\nabla\nu_\delta\rangle _{K} - \frac12\langle \nabla \cdot (b _\delta \nabla^\perp h_\delta), \nu_\delta \rangle _K
\label{eq: TQG omega eqn weak discretised numerical flux}
\\
&\qquad -
\langle \hat{f}_{q }((q _\delta - b _\delta)^{+}, (q _\delta - b _\delta)^{-}, \nabla^\perp \tilde{\psi}_\delta  \cdot {\normal}),
\nu_\delta^ -
\rangle _{\partial K}
\big\}
\nonumber
\end{align}
where $\tilde{\psi} _\delta := \psi _\delta \dd t + \zeta_i^\delta\circ \dd W^i$ was introduced for notation simplicity. 

\begin{remark}
In \eqref{eq: TQG b eqn weak discretised numerical flux} and \eqref{eq: TQG omega eqn weak discretised numerical flux} we do not explicitly distinguish external and internal boundaries, $\p K_\text{ext}$ and $\p K_\text{int}$, because for the boundary condition \eqref{eq: tqg boundary conditions numerical}, the $\p K_\text{ext}$ terms vanish. 
\end{remark}

\end{appendices}


\section*{Acknowledgements}
This work has been partially supported by European Research Council (ERC) Synergy grant STUOD-856408. We are grateful to T. Beale for thoughtful correspondence about the derivation of the celebrated BKM result in the case of periodic boundary conditions.

 \end{document}